\title[Product Formula of Artin Symbols]{Product Formula of Artin Symbols in Non-abelian Extensions}
\author[S.~Sasaki]{Sosuke Sasaki}
\address{Research Institute for Science and Engineering,
Waseda University,
3-4-1 Okubo, Shinjuku-ku, Tokyo 169-8555, Japan}
\email{sosuke\_s5@asagi.waseda.jp}
\subjclass[2020]{11R37 (Primary) 11S31, 11R29, 11R27, 11R11, 05C22 (Secondary)}
\newtheorem{lemma}{Lemma}
\newtheorem{proposition}{Proposition}
\newtheorem{theorem}{Theorem}
\newtheorem{corollary}{Corollary}
\newtheorem{example}{Example}
\theoremstyle{definition}
\newtheorem{definition}{Definition}
\newcommand{\Z}{\mathbb{Z}}
\newcommand{\Q}{\mathbb{Q}}
\newcommand{\F}{\mathbb{F}}
\newcommand{\p}{\mathfrak{p}}
\renewcommand{\P}{\mathfrak{P}}
\newcommand{\ab}{\mathrm{ab}}
\newcommand{\art}[2]{\genfrac{(}{)}{}{}{#1}{#2}}
\DeclareMathOperator{\Gal}{Gal}
\begin{document}

\begin{abstract}
  The product formula of Artin symbols (norm residue symbols) is an important equality
  that connects local and global class field theory.
  Usually,
  the product formula of Artin symbols is considered
  in abelian extensions of global fields.
  In this paper, however, the product is considered in non-abelian extensions
  such that each symbol is well-defined.
  As an application, some properties on fundamental units of real quadratic fields
  are obtained and will be presented here.
\end{abstract}

\maketitle

\section{Introduction}

Let $K$ be a number field (of finite degree) and
$L$ be an (possibly infinite) abelian extension of $K$.
The local Artin symbols
\[
  (\,\cdot\,,L_\P/K_\p)\colon K_\p^\times\longrightarrow\Gal(L_\P/K_\p)
\]
are defined for places $\p$ of $K$ and places $\P$ of $L$ above $\p$.
$\Gal(L_\P/K_\p)$ is naturally embedded in $\Gal(L/K)$
and its image coincides with the decomposition group $Z_\P(L/K)$.
We therefore consider $(\,\cdot\,,L_\P/K_\p)$ to take values in $Z_\P(L/K)$.
For $a\in K^\times$ we define
\[
  \art{a,L/K}{\p}=(a,L_\P/K_\p)\in Z_\P(L/K)\subseteq\Gal(L/K)
\]
which does not depend on a choice of $\P$ above $\p$:
\[
  \begin{tikzpicture}[auto,scale=2]
    \node(Kp)at(0,1){$K_\p^\times$};
    \node(K)at(0,0){$K^\times$};
    \node(LKp)at(2,1){$\Gal(L_\P/K_\p)$};
    \node(LK)at(2,0){$\Gal(L/K)$.};
    \draw[->](Kp)--node{$(\ \cdot\ ,L_\P/K_\p)$}(LKp);
    \draw[->](K)--node{$\art{\ \cdot\ ,L/K}{\p}$}(LK);
    \draw[right hook->](K)--(Kp);
    \draw[left hook->](LKp)--(LK);
  \end{tikzpicture}
\]

Now we state the well-known product formula:
\begin{proposition}
  \[
    \prod_\p\art{a,L/K}{\p}=1
  \]
  for $a\in K^\times$, where $\p$ runs through all the places of $K$.
\end{proposition}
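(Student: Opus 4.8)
The plan is first to reduce to the case that $L/K$ is a \emph{finite} abelian extension. For a finite set $S$ of places of $K$, the partial product $\prod_{\p\in S}\art{a,L/K}{\p}$ lies in $\Gal(L/K)$, and for every open normal subgroup $U\subseteq\Gal(L/K)$, corresponding to a finite subextension $L_U/K$, its image in $\Gal(L_U/K)$ equals $\prod_{\p\in S}\art{a,L_U/K}{\p}$ by the standard compatibility of local Artin symbols with restriction. Now $a$ is a $\p$-adic unit for all but finitely many finite $\p$, the extension $L_U/K$ is ramified at only finitely many places, the archimedean symbols can be nontrivial at most at the (finitely many) real places, and for a finite place $\p$ unramified in $L_U$ with $a\in\mathcal{O}_{K_\p}^\times$ the local Artin symbol is trivial; hence $\art{a,L_U/K}{\p}=1$ outside a finite set $S_U$ of places. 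Therefore the net of partial products converges in the profinite group $\Gal(L/K)$, and its limit is $1$ as soon as $\prod_{\p}\art{a,L_U/K}{\p}=1$ for each finite layer $L_U/K$. So it suffices to treat $L/K$ finite abelian, in which case the displayed product is already a finite product.

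For $L/K$ finite abelian I would pass to Brauer groups. Since $\Gal(L/K)$ is abelian its characters separate points, so it is enough to show $\sum_\p\chi\bigl(\art{a,L/K}{\p}\bigr)=0$ for every homomorphism $\chi\colon\Gal(L/K)\to\Q/\Z$. Writing $G_K$ for the absolute Galois group of $K$, inflate $\chi$ to $G_K$, apply the connecting map of $0\to\Z\to\Q\to\Q/\Z\to0$ to obtain $\delta\chi\in H^2(G_K,\Z)$, and form $a\cup\delta\chi\in H^2(G_K,\overline{K}^\times)=\mathrm{Br}(K)$, viewing $a\in K^\times=H^0(G_K,\overline{K}^\times)$. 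The defining compatibility of the local reciprocity map with the local invariant gives, at each place $\p$,
\[
  \chi\Bigl(\art{a,L/K}{\p}\Bigr)=\mathrm{inv}_\p\bigl((a\cup\delta\chi)_\p\bigr),
\]
where $(\,\cdot\,)_\p$ denotes the localization $\mathrm{Br}(K)\to\mathrm{Br}(K_\p)$. Summing over $\p$ and invoking the reciprocity law for the Brauer group of the global field $K$ — namely that $\sum_\p\mathrm{inv}_\p$ annihilates the image of $\mathrm{Br}(K)\to\bigoplus_\p\mathrm{Br}(K_\p)$ — gives $\sum_\p\chi\bigl(\art{a,L/K}{\p}\bigr)=0$, and letting $\chi$ range over all characters finishes the argument. (Alternatively, one can avoid cohomology and argue classically, reducing by functoriality and a transfer argument to cyclotomic extensions $K(\zeta_m)/K$, where the finitely many local symbols are computed explicitly and the product is verified by hand.)

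The genuinely hard step is not this bookkeeping but the arithmetic input it rests on: the statement is equivalent to Artin's reciprocity law, so any honest proof must import the core of global class field theory — the reciprocity law $\sum_\p\mathrm{inv}_\p=0$ on $\mathrm{Br}(K)$ (equivalently, the global fundamental class), whose own proof needs the analytic first inequality / nonvanishing of Hecke $L$-functions together with the explicit cyclotomic computation. I expect this to be the main obstacle; the reductions above are purely formal once that theorem is in hand, so in the write-up I would simply cite it.
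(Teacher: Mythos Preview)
The paper does not prove this proposition at all: it is stated as the ``well-known product formula'' and immediately followed only by a remark explaining why the product is finite (hence well-defined) in the finite case and thus makes sense in the profinite limit. In other words, the paper treats it as a black-box input from class field theory.

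Your proposal therefore goes strictly further than the paper. Your reduction to finite subextensions via compatibility with restriction is exactly the content of the paper's remark after the proposition, and your argument in the finite case---pairing with characters, identifying $\chi\bigl(\art{a,L/K}{\p}\bigr)$ with the local invariant of the cup product $a\cup\delta\chi$, and then invoking $\sum_\p\mathrm{inv}_\p=0$ on $\mathrm{Br}(K)$---is a standard and correct derivation of the product formula from the Brauer--Hasse--Noether reciprocity law. You are also right that this last input is where the real depth lies; citing it is appropriate, and indeed is effectively what the paper does by declaring the whole statement ``well-known.''
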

If $L/K$ is finite, the product can be regarded as finite
since $\art{a,L/K}{\p}=1$ when $\p$ does not ramify in $L/K$ and
$a$ is a $\p$-adic unit.
Hence the product makes sense even if $L/K$ is infinite,
since the restrictions of it to arbitrary intermediate fields of $L/K$
finite over $K$ are determined.

In the following, infinite products in infinite Galois groups will appear
several times,
they are well-defined in the same way as above.
Although it is possible to argue only with finite extensions,
it is simpler to argue with infinite extensions.

In this paper we consider the product of Artin symbols
in non-abelian extensions.
Although the product is not equal to identity in general,
it can be computed explicitly in some cases.
In particular, in the case of $K=\Q$ and $a=-1$,
the product can be nicely expressed by using graph theory.

In \cite{MR720859},
the theory of genus fields and central extensions is applied to
investigate the class groups of number fields.
We can generalize it and make it more prospective
by applying our theory of products of Artin symbols.

\section{Product of local Artin symbols in non-abelian extensions}

Let $K$ be a number field and fix $a\in K^\times$.
Let $L$ be a (possibly infinite) Galois extension of $K$ satisfying the following two conditions:
\begin{enumerate}
  \item The decomposition groups $Z_\P(L/K)$ are abelian for all places $\P$ of $L$.
  \label{it:cond1}
  \item The image of $(a,L_\P/K_\p)$ in $\Gal(L/K)$ is
  in the center of $\Gal(L/K)$ for each place $\P\mid\p$ of $L/K$.
  \label{it:cond2}
\end{enumerate}

Note that (\ref{it:cond2}) is a condition depending on $a$.
Obviously, any abelian extensions of $K$ satisfy the above conditions.

From the condition (\ref{it:cond1}), the local Artin symbols $(a,L_\P/K_\p)$ are defined.
It is well-known that $(a,L_{\sigma\P}/K_\p)=\sigma(a,L_\P/K_\p)\sigma^{-1}$ for $\sigma\in\Gal(L/K)$,
and from the condition (\ref{it:cond2}), it equals $(a,L_\P/K_\p)$.
It means that $(a,L_\P/K_\p)\in\Gal(L/K)$ does not depend on a choice of $\P$ above $\p$.
Now we define
\[
  \art{a,L/K}{\p}=(a,L_\P/K_\p)\in\Gal(L/K).
\]

\begin{definition}
  For $L$ satisfying the above conditions, we write
  \[
    \pi_{L/K}(a)=\prod_\p\art{a,L/K}{\p}
  \]
  where $\p$ runs through all the places of $K$.
\end{definition}
Our purpose is to examine $\pi_{L/K}(a)$.
The product is infinite in general,
but it is well-defined as with the note in the previous section.
Also the product is commutative from the condition (\ref{it:cond2}).
We know that $\pi_{L/K}(a)$ is in the commutator group $[\Gal(L/K),\Gal(L/K)]$,
because the restriction of $\pi_{L/K}(a)$ to the maximal abelian subextension of $L/K$ becomes identity
from the usual product formula.

We consider what field $L$ satisfies the conditions (\ref{it:cond1}) and (\ref{it:cond2}).
It is easy to see that if $L$ satisfies (\ref{it:cond1}) and (\ref{it:cond2})
then any intermediate field of $L/K$ Galois over $K$ also does.

\begin{proposition}
  For each $K$ and $a\in K^\times$,
  there is a unique maximal (in a fixed algebraic closure of $\Q$) field $L$
  satisfying the above two conditions.
\end{proposition}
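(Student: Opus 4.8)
\emph{Proof proposal.} The plan is to produce the maximal field directly as the compositum of \emph{all} admissible extensions; once one checks that the class of admissible fields is closed under forming (arbitrary) composita, existence and uniqueness of a maximum are immediate, since a partially ordered set has at most one greatest element. So let $\mathcal{F}$ be the set of subfields $L\subseteq\overline{\Q}$ that are Galois over $K$ and satisfy conditions (\ref{it:cond1}) and (\ref{it:cond2}); it is non-empty, containing $K$ and indeed every abelian extension of $K$. Fix a subfamily $\{L_i\}_{i\in I}\subseteq\mathcal{F}$, put $L=\prod_{i\in I}L_i$ (a Galois extension of $K$), and fix a place $\P$ of $L$ above a place $\p$ of $K$, with $\P_i=\P\cap L_i$.

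For condition (\ref{it:cond1}): the restriction maps assemble into an injection $\Gal(L/K)\hookrightarrow\prod_{i}\Gal(L_i/K)$, and since each $\sigma\in Z_\P(L/K)$ fixes $\P$ and stabilises every $L_i$, it fixes every $\P_i$; hence this injection carries $Z_\P(L/K)$ into $\prod_i Z_{\P_i}(L_i/K)$, which is abelian because each factor is. So $Z_\P(L/K)$ is abelian, which is (\ref{it:cond1}) for $L$, and in particular $L_\P/K_\p$ is abelian so that $\art{a,L/K}{\p}$ is defined.

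For condition (\ref{it:cond2}): by compatibility of local Artin symbols with restriction of the top field, under the injection above the symbol $\art{a,L/K}{\p}$ corresponds to $\bigl(\art{a,L_i/K}{\p}\bigr)_{i}$. Given any $\rho\in\Gal(L/K)$, each component $\art{a,L_i/K}{\p}$ is central in $\Gal(L_i/K)$ by (\ref{it:cond2}) for $L_i$, hence commutes with $\rho|_{L_i}$; thus the commutator of $\art{a,L/K}{\p}$ and $\rho$ restricts to the identity on every $L_i$, so it is the identity by injectivity of the restriction map. Therefore $\art{a,L/K}{\p}$ lies in the center of $\Gal(L/K)$, which is (\ref{it:cond2}) for $L$. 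Hence $L\in\mathcal{F}$; taking $L_{\max}=\prod_{L\in\mathcal{F}}L$ now yields an element of $\mathcal{F}$ that contains every admissible field, and uniqueness of the maximum is automatic.

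The only step that is not purely formal is the interplay of the two conditions: (\ref{it:cond2}) for the compositum only becomes meaningful after (\ref{it:cond1}) has been secured there, and the verification of (\ref{it:cond2}) rests on two facts I would state explicitly before using them — the functoriality of the local Artin symbol under restriction of the top field, and the injectivity of $\Gal(L/K)\to\prod_i\Gal(L_i/K)$ for a compositum. No Zorn-type argument is required: closure under arbitrary composita delivers the maximal field in a single step.
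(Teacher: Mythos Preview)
Your proof is correct, but the route differs from the paper's. The paper constructs the maximal field explicitly through the Galois correspondence: starting from the absolute Galois group $\Omega$ of $K$, it first passes to the closed normal subgroup $\Omega_1$ topologically generated by the commutator subgroups $[Z_\P(\overline{\Q}/K),Z_\P(\overline{\Q}/K)]$ of all decomposition groups (whose fixed field $K_1$ is then the maximum for condition~(\ref{it:cond1})), and then to the closed normal subgroup $\Omega_a$ generated by the commutators of the local Artin symbols $(a,(K_1)_\P/K_\p)$ with the full Galois group, whose fixed field $K_a$ is the desired maximum. Your argument instead shows that the class of admissible extensions is closed under arbitrary composita, via the injection $\Gal(L/K)\hookrightarrow\prod_i\Gal(L_i/K)$ together with functoriality of the local Artin symbol under restriction, and then takes the compositum of everything in one step. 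The paper's approach has the advantage of handing you an explicit presentation of $\Gal(K_a/K)$ as a quotient of $\Omega$, which is exactly what is used immediately afterward to define $K_a^{(l)}$; your approach is cleaner as an existence proof and makes transparent why no Zorn-type argument is needed, but it leaves that explicit description to be extracted separately.
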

\begin{proof}
  Let $\Omega$ be the absolute Galois group of $K$.

  Let $\Omega_1$ be the closed subgroup
  \[
    \Braket{[Z_\P(\overline{\Q}/K),Z_\P(\overline{\Q}/K)]\mid\P\text{ : place of }\overline{\Q}}
  \]
  of $\Omega$,
  topologically generated by the commutator subgroups of all the decomposition groups of $\Omega$.
  The corresponding field $K_1$ of $\Omega_1$
  is the maximal field satisfying the condition (\ref{it:cond1}).

  Let $\Omega_a$ be the closed subgroup
  \[
    \Braket{[(a,(K_1)_\P/K_\p),\Omega_1]\mid\P\mid\p\text{ : place of }K_1/K}
  \]
  of $\Omega_1$.
  The corresponding field $K_a$ of $\Omega_a$
  is the maximal field satisfying the conditions (\ref{it:cond1}) and (\ref{it:cond2}).
\end{proof}

\begin{definition}
  $K_a$ denotes the maximal field satisfying the above conditions (\ref{it:cond1}) and (\ref{it:cond2}).
  For a prime $l$,
  $K_a^{(l)}$ denotes the maximal elementary abelian $l$-subextension of $K^\ab$ in $K_a$,
  i.e., the field corresponding to $[\Lambda_a,\Lambda_a]^l[[\Lambda_a,\Lambda_a],\Lambda_a]$
  in $\Lambda_a=\Gal(K_a/K)$.
\end{definition}

In the following, we consider products of Artin symbols
primarily in $K_a^{(2)}/K$.
If we can calculate $\pi_{K_a^{(2)}/K}(a)$,
we also obtain $\pi_{L/K}(a)=\pi_{K_a^{(2)}/K}(a)|_L$ for any intermediate field $L$ of $K_a^{(2)}/K$
with $L/K$ Galois.

First we have to consider how we represent $\pi_{K_a^{(2)}/K}(a)$.
Let $\Lambda_a^{(2)}=\Gal(K_a^{(2)}/K)$
then $\pi_{K_a^{(2)}/K}(a)\in[\Lambda_a^{(2)},\Lambda_a^{(2)}]$.
Note that the map
\begin{align}
  (\Lambda_a^{(2)})^\ab\times(\Lambda_a^{(2)})^\ab&\longrightarrow[\Lambda_a^{(2)},\Lambda_a^{(2)}]\\
  (\sigma^\ab,\tau^\ab)&\longmapsto[\sigma,\tau]=\sigma\tau\sigma^{-1}\tau^{-1}
\end{align}
is well-defined and bilinear.

Fix a family
\[
  \set{\sigma_v|v\in V}\subseteq\Lambda_a^{(2)}
\]
with some index set $V$, such that
$\set{[\sigma_v,\sigma_w]|v,w\in V}$ generates topologically $[\Lambda_a^{(2)},\Lambda_a^{(2)}]$,
and $\set{v\in V|\sigma_v\notin\Theta}$ is finite for any open subgroup $\Theta$ of $\Lambda_a^{(2)}$.
Using them we can write
\[
  \pi_{K_a^{(2)}/K}(a)=\prod_{\{v,w\}\in[V]^2}[\sigma_v,\sigma_w]^{c_{v,w}}
  \label{eq:cvw}
\]
where $c_{v,w}\in\F_2$ and $[V]^2$ means the set of all subsets of cardinality two of $V$.
The product is infinite in general, but the product modulo $[\Theta,\Lambda_a^{(2)}]$
can be regarded as finite for any open subgroup $\Theta$ of $\Lambda_a^{(2)}$.
We want to calculate $c_{v,w}$, but in general
the choice of $(c_{v,w})_{\{v,w\}}$ is not unique
because $([\sigma_v,\sigma_w])_{\{v,w\}}$ is not a basis of $[\Lambda_a^{(2)},\Lambda_a^{(2)}]$.
Hence we have to know the ambiguity of the choice of $(c_{v,w})_{\{v,w\}}$.

Let $\F_2^{[V]^2}$ be the direct product, i.e.,
the vector space over $\F_2$ consists of all $\F_2$-valued functions on $[V]^2$.
A subset of $[V]^2$ can be regarded as an element of $\F_2^{[V]^2}$,
identifying it with its characteristic function.
We can view $\F_2^{[V]^2}$ as a topological group with
the product topology of the discrete space $\F_2$.
$(c_{v,w})_{\{v,w\}}$ is an element of $\F_2^{[V]^2}$.

Let $\F_2^{\oplus[V]^2}$ be the group consists of
all finite subsets of $[V]^2$ with the symmetric difference as an operation.
We view it with the discrete topology.

It is well-known that $\F_2^{[V]^2}$ and $\F_2^{\oplus[V]^2}$
are mutually Pontryagin dual via the pairing
\[
  \F_2^{[V]^2}\times\F_2^{\oplus[V]^2}\longrightarrow\F_2;\;(F,G)\longmapsto\sum_{e\in G}F(e).
\]

\begin{definition}
  For an intermediate field $L$ of $K_a^{(2)}/K$,
  define a surjective homomorphism of topological groups
  \begin{align}
    \mathcal{F}_{L/K}(a)\colon\F_2^{[V]^2}&\longrightarrow[\Gal(L/K),\Gal(L/K)]\\
    d&\longmapsto\prod_{\{v,w\}\in[V]^2}[\sigma_v|_L,\sigma_w|_L]^{d_{v,w}}
  \end{align}
  where $d_{v,w}$ means $d(\{v,w\})$ and $[\Lambda_a^{(2)},\Lambda_a^{(2)}]$
  is given the Krull topology.
  Let $\mathcal{A}_{L/K}(a)\subseteq\F_2^{[V]^2}$ be the kernel of $\mathcal{F}_{L/K}(a)$,
  and $\mathcal{I}_{L/K}(a)\subseteq\F_2^{\oplus[V]^2}$ be the annihilator of $\mathcal{A}_{L/K}(a)$,
  i.e.,
  \[
    \mathcal{I}_{L/K}(a)=\Set{G\in\F_2^{\oplus[V]^2}|\sum_{e\in G}F(e)=0\quad\forall F\in\mathcal{A}_{L/K}(a)}.
  \]
\end{definition}
Note that these definitions depend on the family $(\sigma_v)_{v\in V}$,
but we would not change the choice of $(\sigma_v)_{v\in V}$.

We can easily deduce that
\[
  \widehat{\mathcal{I}_{L/K}(a)}\simeq[\Gal(L/K),\Gal(L/K)]
\]
naturally via the pairing
\begin{align}
  [\Gal(L/K),\Gal(L/K)]\times\mathcal{I}_{L/K}(a)&\longrightarrow\F_2\\
  \Bigl(\prod_{\{v,w\}\in[V]^2}[\sigma_v|_L,\sigma_w|_L]^{d_{v,w}},I\Bigr)&\longmapsto\sum_{\{v,w\}\in I}d_{v,w}
\end{align}
where $\widehat{\mathcal{I}_{L/K}(a)}$ is the dual of $\mathcal{I}_{L/K}(a)$.
Also it is obvious that if $L\subseteq L'$ then
$\mathcal{A}_{L/K}(a)\supseteq\mathcal{A}_{L'/K}(a)$ and
$\mathcal{I}_{L/K}(a)\subseteq\mathcal{I}_{L'/K}(a)$.

$\mathcal{A}_{L/K}(a)$ represents
the ambiguities of the choice of $(c_{v,w})_{\{v,w\}}$ and
$\mathcal{I}_{L/K}(a)$ represents
what linear combinations of $c_{v,w}$ are invariant under
different choices of $(c_{v,w})_{\{v,w\}}$.
For each $I\in\mathcal{I}_{L/K}(a)$,
the value $\sum_{\{v,w\}\in I}c_{v,w}\in\F_2$ is uniquely determined.

\section{Determination of the invariants}

Calculating $\pi_{L/K}(a)$ on a specific number field $L$
gives some information about $(c_{v,w})_{\{v,w\}}$.
We shall consider dihedral extensions of degree $8$.

The following holds as one of the embedding problems:
\begin{proposition}[\cite{Kiming1990}]
  Let $K$ be a number field and
  $K(\sqrt\mu,\sqrt\nu)\ (\mu,\nu\in K^\times)$ be a biquadratic extension of $K$.
  Then the following are equivalent:
  \begin{enumerate}
    \item There is a dihedral extension $L/K$ of degree $8$ containing $K(\sqrt\mu,\sqrt\nu)$
    in which $\Gal(L/K(\sqrt{\mu\nu}))$ is cyclic.
    \item Hilbert symbol $\art{\mu,\nu}{\p}$ equals $1$ for each place $\p$ of $K$.
  \end{enumerate}
\end{proposition}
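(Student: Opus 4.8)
The plan is to recast condition~(i) as the solvability of a Brauer‑type embedding problem, to identify its obstruction with the quaternion‑algebra class $(\mu,\nu)$ in $\mathrm{Br}(K)[2]$, and then to conclude by the Hasse principle for the Brauer group. Set $M=K(\sqrt\mu,\sqrt\nu)$ and $\Gamma=\Gal(M/K)\cong(\Z/2)^2$. If $L/K$ is a dihedral extension of degree~$8$ containing $M$, then, $M/K$ being Galois, $M$ is the fixed field of the unique normal subgroup of order~$2$ of $\Gal(L/K)\cong D_4$, namely its center $Z$; hence $L$ gives rise to the central extension $1\to Z\to D_4\to\Gamma\to 1$ with $D_4/Z\cong\Gamma$. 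Of the three subgroups of order~$4$ of $D_4$ exactly one is cyclic (the other two are Klein four groups), and $\Gal(L/K(\sqrt{\mu\nu}))$ — necessarily one of these three — is the cyclic one precisely when its image in $\Gamma$ equals $\Gal(M/K(\sqrt{\mu\nu}))$. Conversely, any continuous lift of the surjection $\Omega\twoheadrightarrow\Gamma$ to $D_4$, where $\Omega$ is the absolute Galois group of $K$, is automatically surjective: a complement to $Z$ in $D_4$ would, $Z$ being central, make $D_4$ a direct product, hence abelian. So (i) is equivalent to solvability of this embedding problem, the identification $D_4/Z\cong\Gamma$ compatible with the cyclic‑subextension condition being unique up to an automorphism of $D_4$, which is harmless.

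By the standard theory of embedding problems with finite central kernel, solvability is equivalent to the vanishing of the obstruction, which is the inflation along $\Omega\twoheadrightarrow\Gamma$ of the class $\varepsilon\in H^2(\Gamma,\Z/2)$ of the extension $1\to Z\to D_4\to\Gamma\to 1$, an element of $H^2(\Omega,\Z/2)\cong\mathrm{Br}(K)[2]$. To pin down $\varepsilon$ I would use that $H^*(\Gamma,\F_2)$ is the polynomial ring on the two characters $\chi_\mu,\chi_\nu\in H^1(\Gamma,\F_2)$ cut out by $\sqrt\mu$ and $\sqrt\nu$, together with the fact that the restriction of $\varepsilon$ to an order‑$2$ subgroup of $\Gamma$ is nonzero exactly when its preimage in $D_4$ is cyclic. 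Running through the three order‑$2$ subgroups and matching this with the subgroup structure of $D_4$ identifies $\varepsilon$ with $\chi_\mu\cup\chi_\nu$, so its inflation is $[\mu]\cup[\nu]\in\mathrm{Br}(K)[2]$, the class of the quaternion algebra $(\mu,\nu)_K$. I expect this identification to be the delicate step: the bookkeeping of which of $K(\sqrt\mu),K(\sqrt\nu),K(\sqrt{\mu\nu})$ sits below the cyclic quartic must be done carefully, since a slip would replace $(\mu,\nu)$ by $(\mu,-\nu)$.

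Finally, the Hasse principle for the Brauer group of the number field $K$ — the injectivity of $\mathrm{Br}(K)\hookrightarrow\bigoplus_\p\mathrm{Br}(K_\p)$, part of global class field theory — shows that $(\mu,\nu)_K$ vanishes if and only if all of its local invariants do, i.e. if and only if the local quaternion algebras $(\mu,\nu)_{K_\p}$ split at every place $\p$ (only finitely many can fail to split), which is exactly the condition that every Hilbert symbol $\art{\mu,\nu}{\p}$ equals $1$. Chaining the equivalences — (i) $\Leftrightarrow$ the embedding problem is solvable $\Leftrightarrow$ the obstruction $(\mu,\nu)_K$ vanishes $\Leftrightarrow$ (ii) — completes the argument. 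For the implication (ii) $\Rightarrow$ (i) one could instead argue constructively: $(\mu,\nu)_\p=1$ for all $\p$ forces $\nu$ to be a norm $\alpha^2-\mu\beta^2$ from $K(\sqrt\mu)/K$, and then $L$ can be exhibited explicitly as $K(\sqrt\mu,\sqrt\nu,\sqrt{\gamma})$ for a suitable $\gamma\in K(\sqrt\mu)^\times$; but verifying that this has degree~$8$ with group $D_4$ and the correct cyclic subextension amounts to the same computation done by hand, so the cohomological route is cleaner.
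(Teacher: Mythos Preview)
The paper does not prove this proposition at all: it is quoted from \cite{Kiming1990} and used as a black box, with the subsequent Proposition~\ref{prop:d8} obtained by adding the extra condition $L\subseteq K_a$. So there is no ``paper's own proof'' to compare against.

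Your argument is correct and is exactly the standard route for such embedding problems. A few remarks. Your surjectivity claim for any lift $\Omega\to D_4$ is right for the reason you give: a proper subgroup of $D_4$ surjecting onto $D_4/Z$ would be a complement to the center, forcing $D_4$ to be abelian. Your identification of the extension class is also right; the restriction test you outline pins down $\varepsilon$ uniquely as $\chi_\mu\cup\chi_\nu$, since in $H^2((\Z/2)^2,\F_2)=\F_2\langle\chi_\mu^2,\chi_\mu\chi_\nu,\chi_\nu^2\rangle$ the only class that is trivial on $\ker\chi_\mu$ and $\ker\chi_\nu$ but nontrivial on $\ker(\chi_\mu+\chi_\nu)$ is $\chi_\mu\chi_\nu$. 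Your worry about the bookkeeping is well placed but easily resolved: the wrong labeling would produce $\chi_\mu\cup\chi_{\mu\nu}$ or $\chi_\nu\cup\chi_{\mu\nu}$, whose inflations are $(\mu,-\nu)$ and $(-\mu,\nu)$ respectively, so the check that the cyclic quartic sits over $K(\sqrt{\mu\nu})$ (rather than over $K(\sqrt\mu)$ or $K(\sqrt\nu)$) is precisely what selects $(\mu,\nu)$. After that, the Albert--Brauer--Hasse--Noether theorem finishes the job as you say. This is essentially the approach in Kiming's paper, which treats such $2$-group embedding problems via their cohomological obstructions; the alternative constructive route you sketch at the end (writing $\nu$ as a norm and exhibiting $L$ explicitly) is also classical and appears in various forms in the literature on $D_4$-extensions.
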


\begin{figure}[ht]
  \centering
  \begin{tikzpicture}[scale=1.5]
    \node(Q)at(0,-2){$K$};
    \node(m)at(-1,-1){$K(\sqrt\mu)$};
    \node(n)at(1,-1){$K(\sqrt\nu)$};
    \node(mn)at(0,-1){$K(\sqrt{\mu\nu})$};
    \node(b)at(0,0){$K(\sqrt\mu,\sqrt\nu)$};
    \node(M1)at(-2,0){$M_1$};
    \node(M2)at(-1,0){$M_2$};
    \node(N1)at(1,0){$N_1$};
    \node(N2)at(2,0){$N_2$};
    \node(L)at(0,1){$L$};
    \foreach \a/\b in{m/Q,n/Q,mn/Q,b/m,b/n,b/mn,M1/m,M2/m,N1/n,N2/n,L/b,L/M1,L/M2,L/N1,L/N2}
    \draw(\a)--(\b);
  \end{tikzpicture}
  \caption{Intermediate fields of $L/K$}
\end{figure}
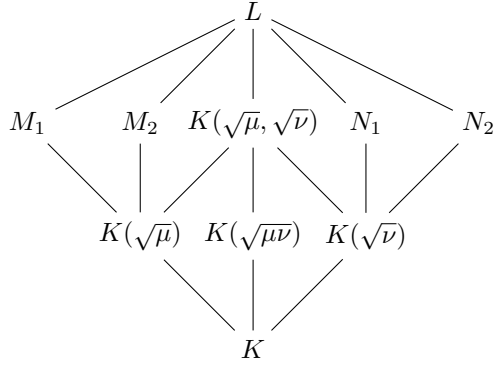

Adding the condition $L\subseteq K_a$ to this proposition, we obtain the following:
\begin{proposition}
  \label{prop:d8}
  Let $K$ be a number field and $a\in K^\times$.
  Let $K(\sqrt\mu,\sqrt\nu)\ (\mu,\nu\in K^\times)$ be a biquadratic extension of $K$.
  Then the following are equivalent:
  \begin{enumerate}
    \item There is a dihedral extension $L/K$ of degree $8$ contained in $K_a^{(2)}$ and
    containing $K(\sqrt\mu,\sqrt\nu)$
    in which $\Gal(L/K(\sqrt{\mu\nu}))$ is cyclic.
    \item $\art{\mu,\nu}{\p}=\art{a,\mu}{\p}=\art{a,\nu}{\p}=1$ and
    $\p$ splits in either of $K(\sqrt\mu),K(\sqrt\nu),K(\sqrt{\mu\nu})$,
    for each place $\p$ of $K$.
    \label{it:d82}
  \end{enumerate}
\end{proposition}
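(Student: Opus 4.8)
The plan is first to reduce ``$L\subseteq K_a^{(2)}$'' to ``$L$ satisfies (\ref{it:cond1}) and (\ref{it:cond2})'' of the previous section, and then to translate (\ref{it:cond1}), (\ref{it:cond2}), together with the quoted proposition of Kiming, into the three local conditions listed in (\ref{it:d82}).

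I would begin with the subgroup structure of a dihedral group $D_8$ of order $8$. If $\Gal(L/K)\cong D_8$ and $K(\sqrt\mu,\sqrt\nu)\subseteq L$ with $[L:K(\sqrt\mu,\sqrt\nu)]=2$, then $\Gal(L/K(\sqrt\mu,\sqrt\nu))$ is an order-$2$ normal subgroup with abelian quotient $(\Z/2)^2$, hence must be the centre of $\Gal(L/K)$; for $D_8$ the centre coincides with the commutator subgroup $[\Gal(L/K),\Gal(L/K)]$, so $K(\sqrt\mu,\sqrt\nu)$ is the maximal abelian subextension of $L/K$, and $K(\sqrt\mu),K(\sqrt\nu),K(\sqrt{\mu\nu})$ are exactly the fixed fields of the three index-$2$ subgroups, the one fixing $K(\sqrt{\mu\nu})$ being cyclic. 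Since $[\Gal(L/K),\Gal(L/K)]$ has order $2$ and is central, whenever $L\subseteq K_a$ the surjection $\Lambda_a=\Gal(K_a/K)\twoheadrightarrow\Gal(L/K)$ kills $[\Lambda_a,\Lambda_a]^2[[\Lambda_a,\Lambda_a],\Lambda_a]$; hence $L\subseteq K_a$ already forces $L\subseteq K_a^{(2)}$, the converse being trivial. Thus for such $L$ the condition $L\subseteq K_a^{(2)}$ is equivalent to ``$L$ satisfies (\ref{it:cond1}) and (\ref{it:cond2})''.

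Next I would translate those two conditions locally. The only non-abelian subgroup of $D_8$ is $D_8$ itself, so $Z_\P(L/K)$ is abelian iff $Z_\P(L/K)\neq\Gal(L/K)$, iff $Z_\P(L/K)$ lies in one of the three index-$2$ subgroups, iff $\p$ splits in the corresponding one of $K(\sqrt\mu),K(\sqrt\nu),K(\sqrt{\mu\nu})$; and the splitting of $\p$ in a quadratic field depends only on that field, not on $L$. So (\ref{it:cond1}) for $L$ amounts to: every place $\p$ of $K$ splits in at least one of $K(\sqrt\mu),K(\sqrt\nu),K(\sqrt{\mu\nu})$. Granting this, $(a,L_\P/K_\p)$ is defined, and since for $D_8$ the centre is the kernel of $\Gal(L/K)\to\Gal(L/K)^\ab=\Gal(K(\sqrt\mu,\sqrt\nu)/K)$, it lies in the centre iff its image there vanishes, i.e.\ iff $\art{a,K(\sqrt\mu,\sqrt\nu)/K}{\p}=1$; projecting this through the isomorphism onto $\Gal(K(\sqrt\mu)/K)\times\Gal(K(\sqrt\nu)/K)$ and identifying the Artin symbol of a quadratic extension with the Hilbert symbol, it becomes $\art{a,\mu}{\p}=\art{a,\nu}{\p}=1$. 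Hence (\ref{it:cond2}) for $L$ amounts to $\art{a,\mu}{\p}=\art{a,\nu}{\p}=1$ for all $\p$.

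Finally I would combine these with Kiming's proposition, which asserts that a dihedral $L/K$ of degree $8$ containing $K(\sqrt\mu,\sqrt\nu)$ with $\Gal(L/K(\sqrt{\mu\nu}))$ cyclic exists iff $\art{\mu,\nu}{\p}=1$ for all $\p$; for any such $L$ the two translations above hold verbatim, which gives one implication immediately. For the converse, Kiming's proposition produces such an $L$, and the step I expect to require the most care is the observation that whether this $L$ satisfies (\ref{it:cond1}) and (\ref{it:cond2}) is governed entirely by $\mu$, $\nu$, $a$---through the splitting of primes in the three fixed quadratic subfields and the vanishing of $\art{a,\mu}{\p},\art{a,\nu}{\p}$---and not by the particular dihedral $L$ chosen, so the hypotheses in (\ref{it:d82}) guarantee $L\subseteq K_a$ and hence $L\subseteq K_a^{(2)}$. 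Along the way I would also check that the archimedean places and any place at which $K(\sqrt\mu,\sqrt\nu)/K$ degenerates cause no trouble.
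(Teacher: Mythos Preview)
Your proposal is correct and follows essentially the same approach as the paper: both reduce $L\subseteq K_a^{(2)}$ to conditions (\ref{it:cond1}) and (\ref{it:cond2}), then use that in $D_8$ the only non-abelian subgroup is $D_8$ itself and the centre equals $\Gal(L/K(\sqrt\mu,\sqrt\nu))$, combined with Kiming's proposition. You supply more detail than the paper does---in particular, you make explicit why $L\subseteq K_a$ already forces $L\subseteq K_a^{(2)}$ for such $L$, and why the local conditions depend only on $\mu,\nu,a$ and not on the choice of $L$---but the argument is the same.
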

\begin{proof}
  Recall that $L\subseteq K_a$ if and only if,
  all the decomposition groups of $L/K$ are abelian and
  all the local Artin symbols at $a$ are in the center of $\Gal(L/K)$.
  A subgroup of $\Gal(L/K)$ is abelian if and only if it is a proper subgroup,
  and the center of $\Gal(L/K)$ equals $\Gal(L/K(\sqrt\mu,\sqrt\nu))$.
  From these we obtain the claim.
\end{proof}
Note that for $\p$ which does not ramify in $K(\sqrt\mu,\sqrt\nu)$,
$\p$ splits in either of $K(\sqrt\mu),K(\sqrt\nu),K(\sqrt{\mu\nu})$
because the decomposition group is cyclic.

\begin{proposition}
  \label{prop:trip}
  Let $K,a,\mu,\nu,L$ satisfy the condition of Proposition~\ref{prop:d8}.

  Let $(\sigma_v)_{v\in V}$ be a family as in the previous section.
  Define disjoint three finite subsets of $V$ as follows:
  \begin{align}
    V_1&=\set{v\in V|\sigma_v\sqrt\mu=\sqrt\mu,\,\sigma_v\sqrt\nu=-\sqrt\nu},\\
    V_2&=\set{v\in V|\sigma_v\sqrt\mu=-\sqrt\mu,\,\sigma_v\sqrt\nu=-\sqrt\nu},\\
    V_3&=\set{v\in V|\sigma_v\sqrt\mu=-\sqrt\mu,\,\sigma_v\sqrt\nu=\sqrt\nu}.
  \end{align}

  Then, $\mathcal{I}_{L/K}(a)$ is the group of order two
  generated by
  \[
    C=\set{\{v,w\}|v\in V_i,\,w\in V_j\quad\exists i,j\in\{1,2,3\},\,i\ne j}\in\F_2^{\oplus[V]^2}.
  \]
  In particular,
  \[
    \pi_{L/K}(a)=1\iff\sum_{\{v,w\}\in C}c_{v,w}=0
  \]
  where $(c_{v,w})_{\{v,w\}}$ is that in the equality~\eqref{eq:cvw}.
\end{proposition}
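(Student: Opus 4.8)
\emph{Plan.} The whole statement reduces to a computation in the finite group $\Gal(L/K)\cong D_8$ followed by the Pontryagin duality already set up in the previous section. First I would record the group theory. Since $L/K$ is dihedral of degree $8$, the commutator subgroup $[\Gal(L/K),\Gal(L/K)]$ has order two and coincides with the center of $\Gal(L/K)$, which by the proof of Proposition~\ref{prop:d8} is $\Gal(L/K(\sqrt\mu,\sqrt\nu))$; fix once and for all the isomorphism $[\Gal(L/K),\Gal(L/K)]\simeq\F_2$. Because the commutator subgroup is central, the commutator map descends to a well-defined alternating bilinear pairing
\[
  \Gal(K(\sqrt\mu,\sqrt\nu)/K)\times\Gal(K(\sqrt\mu,\sqrt\nu)/K)\longrightarrow\F_2,\qquad(\bar\sigma,\bar\tau)\longmapsto[\sigma,\tau],
\]
where $\Gal(K(\sqrt\mu,\sqrt\nu)/K)=\Gal(L/K)/[\Gal(L/K),\Gal(L/K)]\simeq(\Z/2)^2$. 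A direct check in $D_8$ shows this pairing is non-degenerate, hence equals the unique symplectic form on $(\Z/2)^2$; in particular $[\sigma,\tau]\ne1$ exactly when the images of $\sigma$ and $\tau$ in $\Gal(K(\sqrt\mu,\sqrt\nu)/K)$ are distinct and both nontrivial.

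Next I would translate $V_1,V_2,V_3$ through this description. The three nontrivial elements of $\Gal(K(\sqrt\mu,\sqrt\nu)/K)$ are precisely the automorphisms appearing in the definitions of $V_1,V_2,V_3$, so for $v\in V$ the restriction $\sigma_v|_{K(\sqrt\mu,\sqrt\nu)}$ is nontrivial if and only if $v\in V_1\cup V_2\cup V_3$, and then it is the $i$-th of these when $v\in V_i$. Combined with the previous paragraph this gives $[\sigma_v|_L,\sigma_w|_L]\ne1$ if and only if $v$ and $w$ lie in different $V_i$'s, i.e.\ if and only if $\{v,w\}\in C$. Note that $C$ is finite, since $V_1\cup V_2\cup V_3=\set{v\in V|\sigma_v\notin\Gal(K_a^{(2)}/K(\sqrt\mu,\sqrt\nu))}$ and only finitely many $\sigma_v$ lie outside this open subgroup, and $C$ is nonempty because $\mathcal F_{L/K}(a)$ is onto the nontrivial group $[\Gal(L/K),\Gal(L/K)]$. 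Consequently, under the chosen identification $[\Gal(L/K),\Gal(L/K)]\simeq\F_2$,
\[
  \mathcal F_{L/K}(a)(d)=\sum_{\{v,w\}\in C}d_{v,w}=\sum_{e\in C}d(e),
\]
a genuinely finite sum.

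Now $\mathcal A_{L/K}(a)=\ker\mathcal F_{L/K}(a)$ is the closed hyperplane $\set{d\in\F_2^{[V]^2}|\sum_{e\in C}d(e)=0}$, that is, the kernel of the character of $\F_2^{[V]^2}$ given by pairing against $C\in\F_2^{\oplus[V]^2}$. By Pontryagin duality (for the pairing $\F_2^{[V]^2}\times\F_2^{\oplus[V]^2}\to\F_2$ recalled above) the annihilator of the kernel of a nonzero character is exactly the cyclic subgroup generated by that character, so $\mathcal I_{L/K}(a)=\{0,C\}$ is the group of order two generated by $C$ — here it is essential that $C$ is a \emph{finite} set, so that it genuinely lies in $\F_2^{\oplus[V]^2}$. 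Finally, since $\pi_{L/K}(a)=\pi_{K_a^{(2)}/K}(a)|_L=\mathcal F_{L/K}(a)\bigl((c_{v,w})_{\{v,w\}}\bigr)$ by \eqref{eq:cvw}, we conclude $\pi_{L/K}(a)=1\iff\sum_{\{v,w\}\in C}c_{v,w}=0$.

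I expect the only real point of substance to be the explicit verification that the commutator pairing of $D_8$ on $(\Z/2)^2$ is non-degenerate — equivalently, the clean matching of the set $C$ with the pairs $\{v,w\}$ having nontrivial restricted commutator. Everything else is bookkeeping: the translation of the conditions of Proposition~\ref{prop:d8} into the identification of the center, the finiteness of $V_1,V_2,V_3$ built into the choice of $(\sigma_v)_{v\in V}$, and the duality formalism already in place in the preceding section.
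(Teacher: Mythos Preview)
Your argument is correct and follows essentially the same route as the paper's proof: both identify $[\Gal(L/K),\Gal(L/K)]=\Gal(L/K(\sqrt\mu,\sqrt\nu))$ as a group of order two, verify that $[\sigma_v|_L,\sigma_w|_L]$ is nontrivial precisely when $\{v,w\}\in C$, and then read off $\mathcal I_{L/K}(a)=\langle C\rangle$ by duality. Your version is simply more explicit about the symplectic form on $(\Z/2)^2$, the finiteness and non-emptiness of $C$, and the Pontryagin-duality step, whereas the paper compresses these into a couple of lines.
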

\begin{proof}
  The order of $\mathcal{I}_{L/K}(a)$ is two since
  \[
    \widehat{\mathcal{I}_{L/K}(a)}\simeq[\Gal(L/K),\Gal(L/K)]=\Gal(L/K(\sqrt\mu,\sqrt\nu)).
  \]
  One can easily check that
  for $v,w\in V$, $[\sigma_v|_L,\sigma_w|_L]$ equals the generator of $L/K(\sqrt\mu,\sqrt\nu)$
  if $\{v,w\}\in C$, and
  $[\sigma_v|_L,\sigma_w|_L]=1$ otherwise.
  Hence for $(d_{v,w})_{\{v,w\}}\in\F_2^{[V]^2}$,
  \[
    \prod_{\{v,w\}\in[V]^2}[\sigma_v|_L,\sigma_w|_L]^{d_{v,w}}=1\iff\sum_{\{v,w\}\in C}d_{v,w}=0.
  \]
  Therefore we have $\mathcal{I}_{L/K}(a)=\braket{C}$.
\end{proof}

For a quadratic extension $M/K$, we write $S_{M/K}$ (resp.~$I_{M/K},R_{M/K}$)
to be the set of the places of $K$ which splits (resp.~is inert, ramifies)
in $M$.

\begin{lemma}
  \label{lem:fund}
  Let $K$ be a number field and $a\in K^\times$.
  Let $L$ and $M$ be finite Galois extensions of $K$
  such that $K\subseteq M\subseteq L\subseteq K_a$,
  $L/M$ is abelian and $[M:K]=2$.
  Assume that there exists
  $\alpha\in M^\times$ such that $N_{M/K}(\alpha)=a$.
  Take $\sigma\in\Gal(L/K)\smallsetminus\Gal(L/M)$.
  Then
  \[
    \pi_{L/K}(a)=\prod_{\p\in S_{M/K}}\left[\sigma,\art{\alpha,L/M}{\P}\right]
  \]
  where $\P$ stands for a single place of $M$ above $\p$.
\end{lemma}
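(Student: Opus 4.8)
The plan is to compute $\pi_{L/K}(a)$ by sorting the places $\p$ of $K$ according to their splitting behavior in $M/K$ and rewriting each local Artin symbol $\art{a,L/K}{\p}$ in terms of the Artin symbols of $M$, using the assumption $N_{M/K}(\alpha)=a$ together with the compatibility of local Artin symbols with norms. First I would fix, for each place $\p$ of $K$, a place $\P$ of $L$ above it, and write $\Q=\P|_M$. Since $L\subseteq K_a$, all local symbols $\art{a,L/K}{\p}$ are central in $\Gal(L/K)$, so the product $\pi_{L/K}(a)=\prod_\p\art{a,L/K}{\p}$ is commutative and lands in $[\Gal(L/K),\Gal(L/K)]=\Gal(L/M)$ (the last equality because $L/M$ is abelian and $\Gal(M/K)$ has order $2$, so the commutator subgroup of $\Gal(L/K)$ is contained in, hence equals, $\Gal(L/M)$ once one notes it is visibly nontrivial unless $L=M$, in which case the statement is vacuous). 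The key local input is: for $\p$ not ramified and $a$ a unit the symbol is trivial, so only finitely many $\p$ contribute; and for the contributing $\p$, one relates $\art{a,L/K}{\p}$ to $\art{\alpha, L/M}{\Q}$ via the norm.

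Next I would treat the three cases for $\p$ in $M/K$. If $\p$ ramifies or is inert in $M$ (so $\p\in R_{M/K}\cup I_{M/K}$), the decomposition group $Z_\P(L/K)$ is not contained in $\Gal(L/M)$, i.e.\ $Z_\P$ surjects onto $\Gal(M/K)$; here I want to show $\art{a,L/K}{\p}\in\Gal(L/M)$ already forces it to be expressible as a commutator $[\sigma,\,\cdot\,]$ that telescopes away, OR — more cleanly — use that $a=N_{M/K}(\alpha)=N_{M_\Q/K_\p}(\alpha)$ locally, and that for an \emph{abelian} extension the norm residue symbol satisfies $(N_{M_\Q/K_\p}(\beta), L_\P/M_\Q)=(\beta, L_\P/M_\Q)|$ restricted appropriately; the precise statement I would invoke is the functoriality diagram relating $(\,\cdot\,,L_\P/K_\p)$ on $K_\p^\times$ and $(\,\cdot\,,L_\P/M_\Q)$ on $M_\Q^\times$ through the norm $M_\Q^\times\to K_\p^\times$ and the inclusion $Z_\P(L/M)\hookrightarrow Z_\P(L/K)$. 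When $\p$ is split in $M$, say $\Q$ and $\Q'$ are the two places above it with $\sigma$ swapping them, one gets $\art{a,L/K}{\p}$ as a product of contributions from $\Q$ and $\Q'$, and since $(a,L_{\Q'}/M_{\Q'}) = \sigma(a,L_\Q/M_\Q)\sigma^{-1}$ these combine into the commutator $[\sigma,\art{\alpha,L/M}{\Q}]$; summing over $\p\in S_{M/K}$ gives the claimed formula.

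The main obstacle I anticipate is handling the non-split places rigorously: I need to argue that the total contribution of $\p\in R_{M/K}\cup I_{M/K}$ to $\pi_{L/K}(a)$ vanishes. The clean way is to apply the ordinary product formula (Proposition~1) for the abelian extension $L/M$ to the element $\alpha\in M^\times$: $\prod_{\Q}\art{\alpha,L/M}{\Q}=1$, where $\Q$ runs over all places of $M$. I would then group the places $\Q$ of $M$ by their restriction $\p$ to $K$: over a split $\p$ there are two places $\Q,\Q'$ with $\art{\alpha,L/M}{\Q'}=\sigma\art{\alpha,L/M}{\Q}\sigma^{-1}$; over a non-split $\p$ there is one place $\Q$, necessarily $\sigma$-stable in the sense that $\sigma\Q=\Q$ after adjusting $\sigma$, so $\sigma$ normalizes $Z_\Q(L/M)$ and one checks $\art{\alpha,L/M}{\Q}$ is already $\sigma$-fixed — here the hypothesis $L\subseteq K_a$ (condition~(\ref{it:cond2}), centrality) is what guarantees $[\sigma,\art{\alpha,L/M}{\Q}]=1$ for such $\Q$. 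Meanwhile the relation between $\art{a,L/K}{\p}$ and the local norm symbol gives $\art{a,L/K}{\p}=\art{\alpha,L/M}{\Q}\cdot\sigma\art{\alpha,L/M}{\Q}\sigma^{-1}=\art{\alpha,L/M}{\Q}^2\cdot[\sigma^{-1},\art{\alpha,L/M}{\Q}^{-1}]$ in the split case and $\art{a,L/K}{\p}=\art{\alpha,L/M}{\Q}^{?}$ in the non-split case; after multiplying everything together, the "$\art{\alpha,L/M}{\Q}^2$-part" assembles into $\prod_\Q\art{\alpha,L/M}{\Q}=1$ by the product formula for $L/M$, and what survives is exactly $\prod_{\p\in S_{M/K}}[\sigma,\art{\alpha,L/M}{\Q}]$. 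Pinning down the exponents and the non-split local computation — in particular verifying that the local norm index $[K_\p^\times:N_{M_\Q/K_\p}M_\Q^\times]$ interacts correctly, which is where ramified versus inert $\p$ genuinely differ — is the delicate bookkeeping I would need to carry out carefully.
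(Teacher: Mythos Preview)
Your overall strategy---sort places by splitting behaviour in $M/K$, use norm-compatibility to rewrite each $\art{a,L/K}{\p}$ via $\art{\alpha,L/M}{\P}$, and then apply the product formula for $\alpha$ in the abelian extension $L/M$---is exactly the paper's. In the paper's bookkeeping: for $\p$ split one has $M_\P=K_\p$, hence $\art{a,L/K}{\p}=\art{\alpha,L/M}{\P}\,\art{\sigma\alpha,L/M}{\P}$; for $\p$ non-split one has $N_{M_\P/K_\p}(\alpha)=a$, hence $\art{a,L/K}{\p}=\art{\alpha,L/M}{\P}$. Dividing the resulting product by $\prod_{\P}\art{\alpha,L/M}{\P}=1$ (all factors lie in the abelian group $\Gal(L/M)$, so this is legitimate) cancels the non-split terms outright and leaves $\prod_{\p\in S_{M/K}}\art{\sigma\alpha,L/M}{\P}\,\art{\alpha,L/M}{\sigma\P}^{-1}$, which becomes $\prod_{\p\in S_{M/K}}[\sigma,\art{\alpha,L/M}{\sigma\P}]$ after one use of Galois-equivariance.

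Your execution in the split case, however, contains a concrete error. You assert $\art{a,L/K}{\p}=\art{\alpha,L/M}{\P}\cdot\sigma\art{\alpha,L/M}{\P}\sigma^{-1}$ and, separately, that $\art{\alpha,L/M}{\sigma\P}=\sigma\art{\alpha,L/M}{\P}\sigma^{-1}$. The correct equivariance is $\sigma\art{\alpha,L/M}{\P}\sigma^{-1}=\art{\sigma\alpha,L/M}{\sigma\P}$: conjugation by $\sigma$ moves \emph{both} the argument and the place. In particular $\art{\sigma\alpha,L/M}{\P}=\sigma\art{\alpha,L/M}{\sigma\P}\sigma^{-1}$, which involves the \emph{other} place $\sigma\P$ above $\p$, not $\P$ itself; this is why no ``$\art{\alpha,L/M}{\P}^2$'' term appears and why the commutator that survives is $[\sigma,\art{\alpha,L/M}{\sigma\P}]$ (which of course agrees with the stated formula after relabelling the chosen place). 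Once you correct this place bookkeeping you will also see that no appeal to condition~(\ref{it:cond2}) is needed for the non-split places: their contributions cancel literally against the corresponding factors of the product formula inside the abelian group $\Gal(L/M)$, with no commutator manipulation required.
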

\begin{proof}
  If $\p$ splits in $M$ one has $M_\P=K_\p$, hence
  \[
    \art{a,L/K}{\p}=\art{a,L/M}{\P}=\art{\alpha,L/M}{\P}\art{\sigma\alpha,L/M}{\P}.
  \]
  If $\p$ does not split in $M$ one has $N_{M_\P/K_\p}(\alpha)=a$, hence
  \[
    \art{a,L/K}{\p}=\art{\alpha,L/M}{\P}.
  \]
  On the other hand, from the product formula
  \[
    \prod_{\p\in S_{M/K}}\art{\alpha,L/M}{\P}\art{\alpha,L/M}{\sigma\P}
    \prod_{\p\notin S_{M/K}}\art{\alpha,L/M}{\P}=1.
  \]
  Therefore we have
  \begin{align}
    \pi_{L/K}(a)&=\prod_{\p\in S_{M/K}}\art{a,L/K}{\p}\prod_{\p\notin S_{M/K}}\art{a,L/K}{\p} \\
    &=\prod_{\p\in S_{M/K}}\art{\sigma\alpha,L/M}{\P}\art{\alpha,L/M}{\sigma\P}^{-1} \\
    &=\prod_{\p\in S_{M/K}}\sigma\art{\alpha,L/M}{\sigma\P}\sigma^{-1}\art{\alpha,L/M}{\sigma\P}^{-1}.
  \end{align}
  Replacing $\sigma\P$ by $\P$ we obtain the result.
\end{proof}

\begin{proposition}
  \label{prop:main}
  Let $K,a,\mu,\nu,L$ satisfy the condition of Proposition~\ref{prop:d8} and
  let $\alpha\in K(\sqrt\mu)^\times$ be such that $N_{K(\sqrt\mu)/K}(\alpha)=a$
  (which exists from Hasse norm theorem).

  Assume that there exists $b\in K^\times$ such that
  \begin{enumerate}
    \item $v_\p(b)=0$ for non-archimedean $\p\in I_{K(\sqrt\mu)/K}\cup R_{K(\sqrt\mu)/K}$;
    \item either $v_\P(b\alpha)$ or $v_{\P'}(b\alpha)$ is even
    for places $\P,\P'$ of $K(\sqrt\mu)$ above each non-archimedean place $\p\in S_{K(\sqrt\mu)/K}$;
  \end{enumerate}
  where $v$ means the valuation.

  Then $\pi_{L/K}(a)=1$ if and only if
  \[
    \prod_{\p\in S_{K(\sqrt\mu)/K}\cap R_{K(\sqrt\nu)/K}}\art{b\alpha,\nu}{\P}=
    \prod_{\p\in R_{K(\sqrt\mu)/K}\cap R_{K(\sqrt\nu)/K}}\art{b,\nu}{\p}
  \]
  where $\P$ stands for a single place of $K(\sqrt\mu)$ above $\p$.
\end{proposition}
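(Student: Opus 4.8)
The plan is to apply Lemma~\ref{lem:fund} with $M=K(\sqrt\mu)$ and then to convert the commutators that appear into Hilbert symbols. First I would check the hypotheses of Lemma~\ref{lem:fund}: one has $K\subseteq K(\sqrt\mu)\subseteq K(\sqrt\mu,\sqrt\nu)\subseteq L\subseteq K_a$; the group $\Gal(L/K(\sqrt\mu))$ is abelian, being one of the two Klein four-subgroups of the dihedral group $\Gal(L/K)$ of order $8$ (the cyclic index-two subgroup corresponds to $K(\sqrt{\mu\nu})$ by hypothesis); $[K(\sqrt\mu):K]=2$; and $N_{K(\sqrt\mu)/K}(\alpha)=a$. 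Fixing $\sigma\in\Gal(L/K)\smallsetminus\Gal(L/K(\sqrt\mu))$, Lemma~\ref{lem:fund} gives
\[
  \pi_{L/K}(a)=\prod_{\p\in S_{K(\sqrt\mu)/K}}\left[\sigma,\art{\alpha,L/K(\sqrt\mu)}{\P}\right].
\]

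The next step is to evaluate $[\sigma,\tau]$ for $\tau\in\Gal(L/K(\sqrt\mu))$. Put $G=\Gal(L/K)$ and $H=\Gal(L/K(\sqrt\mu))$. Then $[G,G]=Z(G)=\Gal(L/K(\sqrt\mu,\sqrt\nu))$ has order two and lies in $H$, so $\tau\mapsto[\sigma,\tau]$ is a homomorphism $H\to Z(G)$ whose kernel contains $Z(G)$; it is onto (otherwise $\langle\sigma,H\rangle$ would be abelian of order $8$, contradicting that $G$ is dihedral), hence its kernel is exactly $Z(G)$. Therefore $[\sigma,\tau]\neq 1$ precisely when $\tau\sqrt\nu=-\sqrt\nu$. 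As the restriction of $\art{\alpha,L/K(\sqrt\mu)}{\P}$ to $K(\sqrt\mu,\sqrt\nu)$ sends $\sqrt\nu$ to $\art{\alpha,\nu}{\P}\sqrt\nu$, with $\art{\alpha,\nu}{\P}$ the Hilbert symbol over $K(\sqrt\mu)_\P=K_\p$, I would conclude, identifying $Z(G)$ with $\{\pm 1\}$,
\[
  \pi_{L/K}(a)=\prod_{\p\in S_{K(\sqrt\mu)/K}}\art{\alpha,\nu}{\P}.
\]
This product is independent of the choices of $\P\mid\p$, because replacing $\P$ by the other place over a split $\p$ multiplies the corresponding factor by $\art{a,\nu}{\p}=1$ (Proposition~\ref{prop:d8}); so I may pick, for each non-archimedean $\p\in S_{K(\sqrt\mu)/K}$, the place $\P$ for which $v_\P(b\alpha)$ is even, which is available by hypothesis~(ii).

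Now bilinearity of the Hilbert symbol gives
\[
  \prod_{\p\in S_{K(\sqrt\mu)/K}}\art{\alpha,\nu}{\P}
  =\biggl(\prod_{\p\in S_{K(\sqrt\mu)/K}}\art{b\alpha,\nu}{\P}\biggr)\biggl(\prod_{\p\in S_{K(\sqrt\mu)/K}}\art{b,\nu}{\p}\biggr),
\]
and I would analyse the two factors separately. In the first factor the term at $\p$ is trivial unless $\p$ ramifies in $K(\sqrt\nu)$: it is $1$ if $\p$ splits in $K(\sqrt\nu)$, and if $\p$ is inert in $K(\sqrt\nu)$ then $K_\p(\sqrt\nu)/K_\p$ is unramified and the term equals $(-1)^{v_\P(b\alpha)}=1$ by the choice of $\P$ (such $\p$ being non-archimedean); thus the first factor is the left-hand side of the asserted identity. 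For the second factor I would invoke the global product formula $\prod_{\p}\art{b,\nu}{\p}=1$ together with the vanishings: $\art{b,\nu}{\p}=1$ whenever $\p$ splits in $K(\sqrt\nu)$; $\art{b,\nu}{\p}=1$ whenever $\p\in I_{K(\sqrt\nu)/K}$ is non-archimedean and lies in $I_{K(\sqrt\mu)/K}\cup R_{K(\sqrt\mu)/K}$, since then $v_\p(b)=0$ by hypothesis~(i) and $K_\p(\sqrt\nu)/K_\p$ is unramified; and $I_{K(\sqrt\mu)/K}\cap R_{K(\sqrt\nu)/K}=\emptyset$. Granting these, the product formula collapses to
\[
  \prod_{\p\in S_{K(\sqrt\mu)/K}}\art{b,\nu}{\p}=\prod_{\p\in R_{K(\sqrt\mu)/K}\cap R_{K(\sqrt\nu)/K}}\art{b,\nu}{\p},
\]
which is the right-hand side of the asserted identity, and combining the two factors shows that $\pi_{L/K}(a)=1$ is equivalent to the asserted equation.

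I expect the one genuinely delicate point to be the emptiness of $I_{K(\sqrt\mu)/K}\cap R_{K(\sqrt\nu)/K}$, which I would deduce from Proposition~\ref{prop:d8}(\ref{it:d82}): a place $\p$ in this intersection splits in neither $K(\sqrt\mu)$ (where it is inert) nor $K(\sqrt\nu)$ (where it ramifies), so it must split in $K(\sqrt{\mu\nu})$; then $\sqrt\nu\in K_\p(\sqrt\mu)$, so $K_\p(\sqrt\nu)=K_\p(\sqrt\mu)$ is unramified, contradicting $\p\in R_{K(\sqrt\nu)/K}$. The rest consists of standard local computations with Hilbert symbols and norm groups of quadratic extensions; the one structural input beyond Lemma~\ref{lem:fund} is the commutator calculation in the dihedral group of order $8$ together with the identification of its centre with $\Gal(L/K(\sqrt\mu,\sqrt\nu))$.
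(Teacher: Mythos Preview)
Your proof is correct and follows essentially the same route as the paper's: apply Lemma~\ref{lem:fund} with $M=K(\sqrt\mu)$, reduce the commutators to the condition $\art{\alpha,MN/M}{\P}=1$ (equivalently your Hilbert symbol $\art{\alpha,\nu}{\P}$), insert $b$ and invoke the product formula for $b$ in $K(\sqrt\nu)/K$, then kill the unwanted terms using the valuation hypotheses and the emptiness of $I_{K(\sqrt\mu)/K}\cap R_{K(\sqrt\nu)/K}$. The only cosmetic differences are that the paper fixes $\sigma$ to be a generator of the cyclic subgroup $\Gal(L/K(\sqrt{\mu\nu}))$ and reads off the commutator vanishing from its centralizer, whereas you take any $\sigma\notin\Gal(L/K(\sqrt\mu))$ and argue via the kernel of $\tau\mapsto[\sigma,\tau]$; and the paper deduces $I_{K(\sqrt\mu)/K}\cap R_{K(\sqrt\nu)/K}=\emptyset$ directly from the abelian-decomposition-group condition on $L$, while you route it through Proposition~\ref{prop:d8}(\ref{it:d82}).
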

\begin{proof}
  Let $M=K(\sqrt\mu)$ and $N=K(\sqrt\nu)$.
  Let $\sigma$ be a generator of $\Gal(L/K(\sqrt{\mu\nu}))$.
  Since the centralizer of $\sigma$ is $\langle\sigma\rangle$,
  \[
    \left[\sigma,\art{\alpha,L/M}{\P}\right]=1\iff\art{\alpha,MN/M}{\P}=1.
  \]
  Therefore from Lemma~\ref{lem:fund},
  \[
    \pi_{L/K}(a)=1\iff\prod_{\p\in S_{M/K}}\art{\alpha,MN/M}{\P}=1,
  \]
  here for each $\p\in S_{M/K}$ we choose $\P$ above $\p$ such that $v_\P(b\alpha)$ is even.

  From the product formula in $N/K$ we have
  \[
    \prod_{\p\in S_{M/K}}\art{\alpha,MN/M}{\P}=
    \prod_{\p\in S_{M/K}}\art{b\alpha,MN/M}{\P}
    \prod_{\p\in I_{M/K}\cup R_{M/K}}\art{b,N/K}{\p}.
  \]
  By our assumption, $v_\P(b\alpha)$ is even for each non-archimedean place $\p\in S_{M/K}$ and
  $v_\p(b)=0$ for $\p\in I_{M/K}\cup R_{M/K}$.
  Hence on the right hand side of the above equality,
  we may assume $\p\in R_{N/K}$.
  Also we have $I_{M/K}\cap R_{N/K}=\emptyset$,
  since all the decomposition groups of $L/K$ are abelian,
  so either the inertia degree or the ramification index of $MN/K$ must be one
  for each place.
\end{proof}

\section{The case $K=\Q$ and $a=-1$}

We shall consider the case $K=\Q$ and $a=-1$.
In this case, the ambiguities $\mathcal{A}_{\Q_{-1}^{(2)}/\Q}(-1)$
and the invariants $\mathcal{I}_{\Q_{-1}^{(2)}/\Q}(-1)$
can be completely determined using graph theory.

First we prepare the following notation:
\begin{itemize}
  \item $\art{m}{p}$ or $(m/p)$ denotes the Legendre symbol
  for an integer $m$ and an odd prime $p\nmid m$, and
  \[
    \art{m}{2}=\begin{cases}
      +1&(m\equiv\pm1\pmod8)\\
      -1&(m\equiv\pm5\pmod8)
    \end{cases}
  \]
  for an odd $m$.
  \item $\art{m}{p}_4$ or $(m/p)_4$ denotes the quartic residue symbol
  for an integer $m$ and a prime $p\equiv1\ (4)$ with $(m/p)=1$, and
  \[
    \art{m}{2}_4=\begin{cases}
      +1&(m\equiv\pm1\pmod{16})\\
      -1&(m\equiv\pm9\pmod{16})
    \end{cases}
  \]
  for $m\equiv\pm1\ (8)$.
  \item $\varepsilon_m$ denotes a positive fundamental unit of the real quadratic field $\Q(\sqrt m)$.
  We write $N\varepsilon_m$ for the norm of $\varepsilon_m$ from $\Q(\sqrt m)$ to $\Q$.
  If $p\not\equiv3\ (4)$ is a prime which splits in $\Q(\sqrt m)$,
  we write $(\varepsilon_m/p)=\art{\varepsilon_m,p}{\p}$ where $\p$ is a place of $\Q(\sqrt m)$ above $p$.
\end{itemize}

First we define a family $\set{\sigma_p|p\in V}\subseteq\Lambda_{-1}^{(2)}=\Gal(\Q_{-1}^{(2)}/\Q)$.
From the class field theory we know that
$(\Lambda_{-1}^{(2)})^\ab$ is generated modulo square by
\[
  \art{g_p,\Q^\ab/\Q}{p}\ (p\text{ : prime}),\,
  \art{-1,\Q^\ab/\Q}{2},\,
  \art{-1,\Q^\ab/\Q}{\infty}
\]
where $g_p$ is a primitive root modulo $p$ if $p$ is odd
and $g_2=5$.
Take $\sigma_p,\sigma_{-1},\sigma_\infty$ to be lifts of these respectively.
The definition of $\Q_{-1}$ yields that
$\sigma_{-1}$ and $\sigma_\infty$ are in the center
of $\Lambda_{-1}^{(2)}$.
Similarly, if $p\equiv3\ (4)$, $\sigma_p$ is in the center
of $\Lambda_{-1}^{(2)}$
since $\art{-g_p,\Q^\ab/\Q}{p}$ is square.
Hence if we set
\[
  V=\set{p\text{ : prime}|p\not\equiv3\pmod4},
\]
we have that $[\Lambda_{-1}^{(2)},\Lambda_{-1}^{(2)}]$ is generated topologically by
$\set{[\sigma_p,\sigma_q]|p,q\in V}$.
Also $\set{p\in V|\sigma_p\notin\Theta}$ is finite for any open subgroup $\Theta\subseteq\Lambda_{-1}^{(2)}$
since $\sigma_p$ is in the inertia group above $p$.

Therefore we can write
\[
  \pi_{\Q_{-1}^{(2)}/\Q}(-1)=\prod_{\{p,q\}\in[V]^2}[\sigma_p,\sigma_q]^{c_{p,q}}
  \label{eq:qcpq}
\]
with some $c_{p,q}=c_{q,p}\in\F_2$.
Our goal in this section is to determine $\mathcal{A}_{\Q_{-1}^{(2)}/\Q}(-1)$, $\mathcal{I}_{\Q_{-1}^{(2)}/\Q}(-1)$
and values $\sum_{\{p,q\}\in I}c_{p,q}\in\F_2$ for $I\in\mathcal{I}_{\Q_{-1}^{(2)}/\Q}(-1)$.

\begin{definition}
  For a (simple, undirected, possibly infinite) graph $\Gamma=(V,E)\ (E\subseteq[V]^2)$,
  \begin{itemize}
    \item let $\mathcal{E}(\Gamma)=\F_2^E$ be the direct product,
    given the product topology of the discrete space $\F_2$;
    \item for $v\in V$ let $E(v)=\set{e\in E|v\in e}$ be the set of edges
    which are incident on $v$, and
    let $\mathcal{B}(\Gamma)$ be the closed subgroup of $\mathcal{E}(\Gamma)$
    topologically generated by $E(v)$ for $v\in V$;
    \item let $\mathcal{G}(\Gamma)=\F_2^{\oplus E}$ be the direct sum,
    given the discrete topology;
    \item let $\mathcal{C}(\Gamma)$ be the subgroup of $\mathcal{G}(\Gamma)$
    generated by cycles
    \[
      \{\{v_1,v_2\},\{v_2,v_3\},\dots,\{v_k,v_1\}\}
    \]
    where $v_1,\dots,v_k\in V$ are distinct and
    $\{v_1,v_2\},\dots,\{v_k,v_1\}\in E$.
  \end{itemize}
\end{definition}

Split $[V]^2$ into quadratic residue edges and non-residue edges:
\begin{align}
  E_\mathrm{R}&=\Set{\{p,q\}\in[V]^2|\art{p}{q}=+1},\\
  E_\mathrm{N}&=\Set{\{p,q\}\in[V]^2|\art{p}{q}=-1}.
\end{align}
Let $\Gamma_\mathrm{R}=(V,E_\mathrm{R})$ and $\Gamma_\mathrm{N}=(V,E_\mathrm{N})$.

\begin{lemma}
  $\mathcal{C}(\Gamma_\mathrm{N})$ is generated by triangles, i.e.,
  \[
    \mathcal{C}(\Gamma_\mathrm{N})=\Braket{\{\{p,q\},\{q,r\},\{r,p\}\}|p,q,r\in V,\,\art{p}{q}=\art{q}{r}=\art{r}{p}=-1}.
  \]
\end{lemma}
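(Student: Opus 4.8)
The statement asserts that the cycle space of the non-residue graph $\Gamma_{\mathrm N}$ is generated by its triangles. Every cycle space of a graph is generated by its \emph{simple} cycles, so it suffices to show that each simple cycle in $\Gamma_{\mathrm N}$ is a sum (symmetric difference) of triangles whose three edges all lie in $E_{\mathrm N}$. The natural induction is on the length $k$ of a simple cycle $\{v_1,v_2\},\{v_2,v_3\},\dots,\{v_k,v_1\}$ with all edges non-residue. For $k=3$ there is nothing to prove. For $k\ge 4$ I want to ``cut a corner'': pick three consecutive vertices $v_1,v_2,v_3$ and consider the chord $\{v_1,v_3\}$, which may or may not be an edge of $\Gamma_{\mathrm N}$.

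\textbf{The arithmetic input.} The whole point is that $\Gamma_{\mathrm N}$ is not an arbitrary graph: its adjacency is governed by quadratic residue symbols among primes $p\not\equiv 3\pmod 4$, and for such primes $\art{p}{q}=\art{q}{p}$ by quadratic reciprocity (the correction factor vanishes since not both are $\equiv 3\pmod 4$). Thus $\Gamma_{\mathrm N}$ is well-defined as a simple graph, and more importantly the ``non-residue'' relation behaves multiplicatively: if $\art{v_1}{v_2}=\art{v_2}{v_3}=-1$ then the product $\art{v_1}{v_2}\art{v_2}{v_3}=+1$, and one would like to conclude something about $\art{v_1}{v_3}$. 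This is where I expect to need a genuine idea rather than pure graph theory: there is no reason $\art{v_1}{v_3}$ is determined by the other two symbols. So the corner-cutting must be done more cleverly — I would look at the chord $\{v_1,v_3\}$ in the \emph{complete} graph and split into two cases according to whether $\{v_1,v_3\}\in E_{\mathrm N}$ or $\{v_1,v_3\}\in E_{\mathrm R}$.

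\textbf{Key steps.} First, if $\{v_1,v_3\}\in E_{\mathrm N}$: then $\{v_1,v_2\},\{v_2,v_3\},\{v_3,v_1\}$ is an honest triangle of $\Gamma_{\mathrm N}$, and adding it to the original cycle produces a shorter closed walk in $\Gamma_{\mathrm N}$, namely the cycle $\{v_1,v_3\},\{v_3,v_4\},\dots,\{v_k,v_1\}$ of length $k-1$; decompose it by induction and we are done (a shorter closed walk decomposes into simple cycles each of shorter length, and those by induction into triangles). Second — the obstructive case — if $\{v_1,v_3\}\in E_{\mathrm R}$, the naive triangle is not available. Here I would use the structure of $V$: since $V$ is the set of primes $\not\equiv 3\pmod 4$, it is infinite, and for any two primes $p,q$ one can find (by Dirichlet / Chebotarev, or elementary reciprocity arguments) a prime $r\in V$ with prescribed quadratic residue behavior relative to $p$ and $q$ — in particular with $\art{r}{v_1}=\art{r}{v_2}=-1$ or whatever pattern is needed. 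Concretely, to handle the chord $\{v_1,v_3\}$ I would insert an auxiliary vertex $r$ with $\art{r}{v_1}=\art{r}{v_3}=-1$ and $\art{r}{v_j}=+1$ for $j=2$: then $\{v_1,r\},\{r,v_3\},\{v_3,v_1\}$ — no, that still uses $\{v_3,v_1\}\notin E_{\mathrm N}$. Instead: replace the single chord with a \emph{path} $v_1-r-v_3$ of two non-residue edges; the closed walk $(v_1,v_2,v_3,r,v_1)$ of length $4$ together with the original cycle gives, after the edge $\{v_1,v_3\}$ cancels, the cycle $(v_1,r,v_3,v_4,\dots,v_k)$ of length $k-1$ lying in $\Gamma_{\mathrm N}$ — then induct — and separately one must decompose the length-$4$ closed walk $(v_1,v_2,v_3,r)$, which splits via the chord $\{v_1,v_3\}$? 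That chord is residue again, so instead split $(v_1,v_2,v_3,r)$ via the chord $\{v_2,r\}$, which we arranged to be residue too. The cleaner route: arrange $r$ with $\art{r}{v_1}=\art{r}{v_2}=\art{r}{v_3}=-1$; then $(v_1,v_2,r)$ and $(v_2,v_3,r)$ and $(v_1,r,v_3,v_4,\dots,v_k)$ are all in $\Gamma_{\mathrm N}$, and their symmetric difference is exactly the original $k$-cycle, with the first two being triangles and the last having length $k-1$. So:

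\begin{proof}
The cycle space $\mathcal C(\Gamma_{\mathrm N})$ is generated by the (edge sets of) simple cycles of $\Gamma_{\mathrm N}$, so it suffices to show every simple cycle of $\Gamma_{\mathrm N}$ is a sum of triangles of $\Gamma_{\mathrm N}$; we induct on the length $k$ of the cycle. Note first that $\Gamma_{\mathrm N}$ is a well-defined simple graph: for $p,q\in V$ we have $p\not\equiv 3$ or $q\not\equiv 3\pmod 4$, so quadratic reciprocity gives $\art{p}{q}=\art{q}{p}$.

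For $k=3$ the cycle is itself a triangle. Let $k\ge 4$ and let the cycle be $\{v_1,v_2\},\{v_2,v_3\},\dots,\{v_k,v_1\}$ with distinct $v_1,\dots,v_k\in V$ and all these edges in $E_{\mathrm N}$. I claim there is a prime $r\in V$, distinct from all $v_i$, with
\[
  \art{r}{v_1}=\art{r}{v_2}=\art{r}{v_3}=-1.
\]
Indeed the conditions $\art{r}{v_i}=-1$ for $i=1,2,3$ are, via reciprocity, equivalent to congruence conditions on $r$ modulo $8v_1v_2v_3$ (together with $r\equiv 1\pmod 4$ to ensure $r\in V$); these conditions are simultaneously satisfiable by an arithmetic progression — one checks that the quadratic character conditions $\art{r}{v_i}=-1$ impose independent constraints once $r$ is fixed modulo the other primes, and the choice $r\equiv 1\pmod 8$ makes the symbol $\art{r}{2}$ (if some $v_i=2$) equal $+1$, which we avoid by instead only requiring $r\not\equiv 3\pmod 4$ and selecting $r\equiv 5\pmod 8$ when $2\in\{v_1,v_2,v_3\}$ — so by Dirichlet's theorem such a prime $r$ exists.

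With such an $r$, the edges $\{v_1,r\},\{v_2,r\},\{v_3,r\}$ all lie in $E_{\mathrm N}$, so
\[
  T_1=\{\{v_1,v_2\},\{v_2,r\},\{r,v_1\}\},\qquad T_2=\{\{v_2,v_3\},\{v_3,r\},\{r,v_2\}\}
\]
are triangles of $\Gamma_{\mathrm N}$, and
\[
  C'=\{\{v_1,r\},\{r,v_3\},\{v_3,v_4\},\dots,\{v_{k-1},v_k\},\{v_k,v_1\}\}
\]
is a simple cycle of $\Gamma_{\mathrm N}$ of length $k-1$ (its edges lie in $E_{\mathrm N}$, and $r\ne v_i$ for all $i$ keeps it simple). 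A direct check of incidences shows that in $\mathcal G(\Gamma_{\mathrm N})$,
\[
  T_1+T_2+C'=\{\{v_1,v_2\},\{v_2,v_3\},\dots,\{v_k,v_1\}\},
\]
since the auxiliary edges $\{v_1,r\},\{v_2,r\},\{v_3,r\}$ each occur in exactly two of the three summands and cancel. By the induction hypothesis $C'$ is a sum of triangles of $\Gamma_{\mathrm N}$, hence so is the original cycle. This completes the induction and the proof.
\end{proof}

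(The delicate point, as flagged, is the existence of the auxiliary prime $r$ with the three prescribed non-residue symbols; this is a routine application of Dirichlet's theorem on primes in arithmetic progressions once the symbols are translated into congruence conditions via quadratic reciprocity, taking care of the prime $2$ separately via congruences mod $8$ or mod $16$ as needed.)
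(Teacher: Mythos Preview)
Your proof is correct and rests on the same key idea as the paper's: use Dirichlet's theorem to manufacture an auxiliary prime $r\in V$ with prescribed non-residue symbols against chosen vertices of the cycle, then decompose using triangles through $r$. The only difference is organizational: you prescribe $(r/v_i)=-1$ for three consecutive vertices and induct on the cycle length, whereas the paper picks a single $l\in V$ with $(p_i/l)=-1$ for \emph{all} $i=1,\dots,k$ simultaneously and writes the $k$-cycle directly as the symmetric difference of the $k$ triangles $C_{i,i+1}=\{\{p_i,p_{i+1}\},\{p_{i+1},l\},\{l,p_i\}\}$ (indices mod $k$), avoiding induction altogether. Both work; the paper's version is a little cleaner, while yours needs only three residue conditions at a time rather than $k$.
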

\begin{proof}
  Take distinct $p_1,\dots,p_k\in V$ with
  $(p_1/p_2)=(p_2/p_3)=\dots=(p_k/p_1)=-1$.
  Let
  \[
    C=\{\{p_1,p_2\},\{p_2,p_3\},\dots,\{p_{k-1},p_k\},\{p_k,p_1\}\}\in\mathcal{C}(\Gamma_\mathrm{N})
  \]
  be a cycle of length $k>2$.
  From Dirichlet's theorem on arithmetic progressions,
  there exists $l\in V$ such that $(p_i/l)=-1$ for $i=1,\dots,k$.
  Then $C$ equals the symmetric difference of $k$ triangles
  $C_{1,2},C_{2,3},\dots,C_{k,1}$,
  where $C_{i,j}=\{\{p_i,p_j\},\{p_j,l\},\{l,p_i\}\}$.
  Hence all cycles of $\mathcal{C}(\Gamma_\mathrm{N})$ are generated by triangles,
  which proves the lemma.
\end{proof}

\begin{theorem}
  \label{thm:graph}
  $\mathcal{A}_{\Q_{-1}^{(2)}/\Q}(-1)=\mathcal{B}(\Gamma_\mathrm{N})$ and
  $\mathcal{I}_{\Q_{-1}^{(2)}/\Q}(-1)=\mathcal{G}(\Gamma_\mathrm{R})\oplus\mathcal{C}(\Gamma_\mathrm{N})$.
\end{theorem}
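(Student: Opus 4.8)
The plan is to derive both displayed equalities from a single one by Pontryagin duality, and to prove that one by combining Proposition~\ref{prop:trip} (which produces elements of $\mathcal{I}_{\Q_{-1}^{(2)}/\Q}(-1)$) with a purely local computation (which produces relations among the $[\sigma_p,\sigma_q]$, i.e.\ elements of $\mathcal{A}_{\Q_{-1}^{(2)}/\Q}(-1)$).

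First I would record the duality reduction. Since $[V]^2=E_\mathrm{R}\sqcup E_\mathrm{N}$ one splits $\F_2^{[V]^2}=\mathcal{E}(\Gamma_\mathrm{R})\times\mathcal{E}(\Gamma_\mathrm{N})$ and $\F_2^{\oplus[V]^2}=\mathcal{G}(\Gamma_\mathrm{R})\oplus\mathcal{G}(\Gamma_\mathrm{N})$ compatibly with the pairing. By the classical fact that in any graph the cut space is the orthogonal complement of the cycle space --- $\mathcal{B}(\Gamma)^\perp=\mathcal{C}(\Gamma)$ in $\mathcal{G}(\Gamma)$, and (as $\mathcal{B}(\Gamma)$ is closed) $\mathcal{C}(\Gamma)^\perp=\mathcal{B}(\Gamma)$ in $\mathcal{E}(\Gamma)$ --- the annihilator of $\mathcal{B}(\Gamma_\mathrm{N})$, regarded inside $\F_2^{[V]^2}$ by extending by zero on $E_\mathrm{R}$, is exactly $\mathcal{G}(\Gamma_\mathrm{R})\oplus\mathcal{C}(\Gamma_\mathrm{N})$. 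As $\mathcal{A}_{\Q_{-1}^{(2)}/\Q}(-1)$ and $\mathcal{I}_{\Q_{-1}^{(2)}/\Q}(-1)$ are mutual annihilators, it therefore suffices to prove the two inclusions $\mathcal{G}(\Gamma_\mathrm{R})\oplus\mathcal{C}(\Gamma_\mathrm{N})\subseteq\mathcal{I}_{\Q_{-1}^{(2)}/\Q}(-1)$ and $\mathcal{B}(\Gamma_\mathrm{N})\subseteq\mathcal{A}_{\Q_{-1}^{(2)}/\Q}(-1)$.

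For the first inclusion I would manufacture dihedral extensions via Proposition~\ref{prop:d8}. For $v\in V$ let $M_v$ be the quadratic field ramified only at $v$ on which $\sigma_v$ acts nontrivially (thus $M_v=\Q(\sqrt{v^\ast})$, $v^\ast=(-1)^{(v-1)/2}v$, for odd $v$, and $M_2=\Q(\sqrt2)$); then $\sigma_w$ for $w\in V\smallsetminus\{v\}$ and the central generators $\sigma_{-1},\sigma_\infty$ and $\sigma_\ell$ $(\ell\equiv3\bmod4)$ all fix $M_v$. Given $\{p,q\}\in E_\mathrm{R}$, apply Proposition~\ref{prop:d8} with $\Q(\sqrt\mu)=M_p$, $\Q(\sqrt\nu)=M_q$: the conditions $\art{-1,\mu}{\ell}=\art{-1,\nu}{\ell}=1$ for all $\ell$ hold automatically because $p,q\in V$; the condition $\art{\mu,\nu}{\ell}=1$ for all $\ell$ reduces by quadratic reciprocity to $\art pq=1$; and the splitting condition is automatic, the only ramified primes being $p,q,\infty$ while $q$ splits in $M_p$ (since $\art qp=\art pq=1$) and $p$ in $M_q$. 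So a $D_8$-extension $L\subseteq\Q_{-1}^{(2)}$ of the required shape exists; reading off $V_1,V_2,V_3$ of Proposition~\ref{prop:trip} from the action of the $\sigma_v$ on $\sqrt\mu,\sqrt\nu$ gives $V_1=\{q\}$, $V_2=\emptyset$, $V_3=\{p\}$, so $C=\{\{p,q\}\}$, whence $\{\{p,q\}\}\in\mathcal{I}_{L/\Q}(-1)\subseteq\mathcal{I}_{\Q_{-1}^{(2)}/\Q}(-1)$ and thus $\mathcal{G}(\Gamma_\mathrm{R})\subseteq\mathcal{I}_{\Q_{-1}^{(2)}/\Q}(-1)$. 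For a triangle $\{\{p,q\},\{q,r\},\{r,p\}\}$ of $\Gamma_\mathrm{N}$, run the same argument with $\Q(\sqrt\mu)=\Q(\sqrt{q^\ast r^\ast})$, $\Q(\sqrt\nu)=\Q(\sqrt{p^\ast q^\ast})$ (so $\Q(\sqrt{\mu\nu})=\Q(\sqrt{p^\ast r^\ast})$; with the evident modification if $r=2$): the $\art{-1,\cdot}{\ell}$ conditions are again automatic, $\art{\mu,\nu}{\ell}=1$ for all $\ell$ reduces to $\art pq=\art qr=\art rp=-1$, and the splitting condition holds because $p$ splits in $\Q(\sqrt{q^\ast r^\ast})$ (as $\art qp\art rp=1$) and symmetrically. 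Proposition~\ref{prop:trip} then yields $V_1=\{p\}$, $V_2=\{q\}$, $V_3=\{r\}$, so $C$ equals the triangle, which therefore lies in $\mathcal{I}_{\Q_{-1}^{(2)}/\Q}(-1)$. Since the preceding lemma shows these triangles generate $\mathcal{C}(\Gamma_\mathrm{N})$, the first inclusion follows.

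For the second inclusion I must show that each star $E_\mathrm{N}(p)$ lies in $\mathcal{A}_{\Q_{-1}^{(2)}/\Q}(-1)$, i.e.\ that $\prod_{q\colon\art qp=-1}[\sigma_p|_{\Q_{-1}^{(2)}},\sigma_q|_{\Q_{-1}^{(2)}}]=1$ in $[\Gal(\Q_{-1}^{(2)}/\Q),\Gal(\Q_{-1}^{(2)}/\Q)]$ for every $p\in V$; this suffices because these stars topologically generate the closed subgroup $\mathcal{B}(\Gamma_\mathrm{N})$ and $\mathcal{A}_{\Q_{-1}^{(2)}/\Q}(-1)$ is closed. The structural input, which I expect to be the main obstacle, is that $\Q^\ab\subseteq\Q_{-1}^{(2)}\subseteq\Q_{-1}$ forces the completion of $\Q_{-1}^{(2)}$ at every place over a prime $\ell$ to be $\Q_\ell^\ab$, so $[\Gal(\Q_{-1}^{(2)}/\Q),\Gal(\Q_{-1}^{(2)}/\Q)]=\Gal(\Q_{-1}^{(2)}/\Q^\ab)$, the decomposition groups $Z_\P(\Q_{-1}^{(2)}/\Q)$ are abelian, and any two local Artin symbols at one prime have commuting lifts in $\Gal(\Q_{-1}^{(2)}/\Q)$. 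Granting this, one uses that the commutator pairing is central, bilinear, kills squares, and kills the central generators $\sigma_\ell$ $(\ell\equiv3\bmod4)$, $\sigma_{-1}$, $\sigma_\infty$, to reduce the claim to: in $\Gal(\Q^\ab/\Q)=\widehat\Z^\times$, the element $\prod_{q\colon\art qp=-1}\sigma_q$ agrees with the global Frobenius at $p$ modulo squares and those central generators. That in turn is a coordinate comparison in $\widehat\Z^\times/2\cong\prod_\ell\Z_\ell^\times/2$: the two elements agree in every coordinate indexed by a prime of $V$ (both nontrivial there exactly when $\art p\ell=-1$) and in the $\sigma_2$-coordinate at $2$ (both nontrivial exactly when $p\equiv5\bmod8$), differ only by central generators in the coordinates indexed by primes $\equiv3\bmod4$, and both vanish in the coordinate indexed by $p$. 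Hence $\prod_{q\colon\art qp=-1}[\sigma_p,\sigma_q]=[\sigma_p,\mathrm{Frob}_p]$, which is $1$ since $\sigma_p$ and $\mathrm{Frob}_p$ lift into the abelian group $Z_\P(\Q_{-1}^{(2)}/\Q)$. Combining the two inclusions with the duality reduction of the first paragraph gives $\mathcal{A}_{\Q_{-1}^{(2)}/\Q}(-1)=\mathcal{B}(\Gamma_\mathrm{N})$ and $\mathcal{I}_{\Q_{-1}^{(2)}/\Q}(-1)=\mathcal{G}(\Gamma_\mathrm{R})\oplus\mathcal{C}(\Gamma_\mathrm{N})$.
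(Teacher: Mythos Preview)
Your proposal is correct and follows essentially the same route as the paper: reduce both equalities to the pair of inclusions $\mathcal{B}(\Gamma_\mathrm{N})\subseteq\mathcal{A}$ and $\mathcal{G}(\Gamma_\mathrm{R})\oplus\mathcal{C}(\Gamma_\mathrm{N})\subseteq\mathcal{I}$ via the cut--cycle duality, obtain the second inclusion from Propositions~\ref{prop:d8} and~\ref{prop:trip} with $(\mu,\nu)=(p,q)$ and $(\mu,\nu)=(pq,pr)$, and obtain the first by showing $\prod_{q:(p/q)=-1}[\sigma_p,\sigma_q]=1$. The only cosmetic difference is in this last step: where you compare $\prod_{q:(q/p)=-1}\sigma_q$ with a ``Frobenius at $p$'' in $\widehat{\Z}^\times$ and use that $\sigma_p$ commutes with it, the paper phrases the same computation as the product formula for the element $p\in\Q^\times$ (so that your Frobenius is exactly $\art{p,\Q^\ab/\Q}{p}^{\pm1}$), which avoids the slight awkwardness of speaking of Frobenius at a ramified prime.
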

\begin{proof}
  Let $p\in V$.
  From the product formula,
  \[
    \prod_q\art{p,\Q_{-1}^{(2)}/\Q}{q}\in[\Lambda_{-1}^{(2)},\Lambda_{-1}^{(2)}]
  \]
  where the product runs over all places of $\Q$,
  and $\art{p,\Q_{-1}^{(2)}/\Q}{q}$ denotes a lift of $\art{p,\Q^\ab/\Q}{q}$.
  Here if $(p/q)=1$, $\art{p,\Q_{-1}^{(2)}/\Q}{q}$ are in the center of $\Lambda_{-1}^{(2)}$
  since $\art{p,\Q^\ab/\Q}{q}$ is square.
  Similarly if $q\equiv3\ (4)$ or $q=\infty$, $\art{p,\Q_{-1}^{(2)}/\Q}{q}$ is also in the center of $\Lambda_{-1}^{(2)}$.
  Hence we have
  \[
    1=\left[\sigma_p,\prod_q\art{p,\Q_{-1}^{(2)}/\Q}{q}\right]
    =\left[\sigma_p,\art{p,\Q_{-1}^{(2)}/\Q}{p}\right]\prod_{q\in V,\,(p/q)=-1}[\sigma_p,\sigma_q],
  \]
  and $\Bigl[\sigma_p,\art{p,\Q_{-1}^{(2)}/\Q}{p}\Bigr]=1$
  since the decomposition groups of $\Lambda_{-1}^{(2)}$ are abelian.
  Therefore $\mathcal{A}_{\Q_{-1}^{(2)}/\Q}(-1)$ contains $E_\mathrm{N}(p)$ for all $p\in V$,
  which yields $\mathcal{A}_{\Q_{-1}^{(2)}/\Q}(-1)\supseteq\mathcal{B}(\Gamma_\mathrm{N})$.

  Let $p,q\in V$ with $(p/q)=1$.
  One can check that the condition (\ref{it:d82}) of Proposition~\ref{prop:d8} holds for $K=\Q,\,a=-1,\,\mu=p,\,\nu=q$.
  Hence there exists a dihedral field $L$ of degree $8$ with $L\subseteq\Q_{-1}^{(2)}$.
  From Proposition~\ref{prop:trip},
  $\{p,q\}$ is in $\mathcal{I}_{L/\Q}(-1)$,
  hence in $\mathcal{I}_{\Q_{-1}^{(2)}/\Q}(-1)$.
  This yields $\mathcal{I}_{\Q_{-1}^{(2)}/\Q}(-1)\supseteq\mathcal{G}(\Gamma_\mathrm{R})$.

  Similarly let $p,q,r\in V$ with $(p/q)=(q/r)=(r/p)=-1$.
  From Proposition~\ref{prop:d8} and \ref{prop:trip} with $K=\Q,\,a=-1,\,\mu=pq,\,\nu=pr$
  we have $\{p,q\},\{q,r\},\{r,p\}$ are in $\mathcal{I}_{L/\Q}(-1)$,
  hence in $\mathcal{I}_{\Q_{-1}^{(2)}/\Q}(-1)$.
  This yields $\mathcal{I}_{\Q_{-1}^{(2)}/\Q}(-1)\supseteq\mathcal{C}(\Gamma_\mathrm{N})$
  from the above lemma.

  Now we deduce the theorem from the next proposition.
\end{proof}

\begin{proposition}
  $\mathcal{B}(\Gamma)$ and $\mathcal{C}(\Gamma)$ are mutually
  annihilators in the duality of $\mathcal{E}(\Gamma)$ and $\mathcal{G}(\Gamma)$
  for any graph $\Gamma$.
\end{proposition}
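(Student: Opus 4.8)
The plan is to pin down one of the two annihilators by a direct computation and then obtain the other for free from Pontryagin duality, since a frontal attack on $\mathcal{C}(\Gamma)^\perp$ (showing that an edge-function orthogonal to every cycle is a limit of stars) is the awkward direction. Concretely, I will prove $\mathcal{B}(\Gamma)^\perp=\mathcal{C}(\Gamma)$ inside $\mathcal{G}(\Gamma)$, and then deduce $\mathcal{C}(\Gamma)^\perp=\mathcal{B}(\Gamma)$.

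\emph{Computing $\mathcal{B}(\Gamma)^\perp$.} For a fixed $G\in\mathcal{G}(\Gamma)=\F_2^{\oplus E}$ the functional $F\mapsto\sum_{e\in G}F(e)$ is continuous on $\mathcal{E}(\Gamma)$ because $G$ is a finite edge set; hence $G$ annihilates the \emph{closed} subgroup $\mathcal{B}(\Gamma)$ if and only if it annihilates each of the topological generators $E(v)$. Now $\sum_{e\in E(v)}G(e)$ is just $|G\cap E(v)|\bmod 2$, the parity of the degree of $v$ in the finite subgraph $G$. So $\mathcal{B}(\Gamma)^\perp$ is precisely the set of finite subgraphs of $\Gamma$ in which every vertex has even degree, and it remains to identify this set with $\mathcal{C}(\Gamma)$.

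\emph{Even subgraphs form the cycle space.} One inclusion is immediate: a cycle has degree $2$ at each of its vertices and degree $0$ elsewhere, and degrees add modulo $2$ under symmetric difference, so every element of $\mathcal{C}(\Gamma)$ is an even subgraph. For the converse I induct on the number of edges of a finite even subgraph $G$; the case $G=\emptyset$ is the empty combination of cycles. If $G\ne\emptyset$, pick an edge of $G$ and take a maximal simple path $v_0v_1\cdots v_\ell$ in $G$ starting along it; by maximality all neighbours of $v_0$ in $G$ lie among $v_1,\dots,v_\ell$, while $\deg_G(v_0)\ge 2$ (even and positive), so $v_0$ has a neighbour $v_i$ with $i\ge 2$, and $v_0v_1\cdots v_iv_0$ is a cycle $C\subseteq G$. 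Since $C\subseteq G$, the set $G\triangle C=G\setminus C$ is again an even subgraph with strictly fewer edges, so $G\setminus C\in\mathcal{C}(\Gamma)$ by induction and $G=(G\setminus C)\triangle C\in\mathcal{C}(\Gamma)$. Hence $\mathcal{B}(\Gamma)^\perp=\mathcal{C}(\Gamma)$.

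\emph{The other annihilator.} The group $\mathcal{E}(\Gamma)=\F_2^E$ is compact abelian with Pontryagin dual $\mathcal{G}(\Gamma)=\F_2^{\oplus E}$ under the pairing $(F,G)\mapsto\sum_{e\in G}F(e)$ (the same duality recalled earlier for $[V]^2$), and $\mathcal{B}(\Gamma)$ is closed by definition; so the double-annihilator theorem gives $\mathcal{B}(\Gamma)^{\perp\perp}=\mathcal{B}(\Gamma)$, whence $\mathcal{C}(\Gamma)^\perp=\mathcal{B}(\Gamma)^{\perp\perp}=\mathcal{B}(\Gamma)$. Thus $\mathcal{B}(\Gamma)$ and $\mathcal{C}(\Gamma)$ are mutual annihilators. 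The \emph{only} non-formal ingredient is the cycle decomposition of the previous paragraph — in particular the maximal-path argument, together with the observation that we never leave the direct sum because only finite edge sets occur in $\mathcal{G}(\Gamma)$; everything else is bookkeeping with the pairing plus a citation of Pontryagin duality.
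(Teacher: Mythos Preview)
Your proof is correct. The paper itself does not give an argument here: it cites \cite[Theorem~1.9.4]{MR3644391} for the finite case and remarks that the infinite case is similar. What you have written is precisely the standard proof behind that citation---identifying $\mathcal{B}(\Gamma)^\perp$ with the finite even-degree subgraphs and then decomposing any such subgraph into cycles via a maximal-path argument---together with an explicit handling of the infinite case. Your use of the double-annihilator theorem from Pontryagin duality to recover $\mathcal{C}(\Gamma)^\perp=\mathcal{B}(\Gamma)$ is exactly the right way to make the paper's ``similarly'' precise: it is here that closedness of $\mathcal{B}(\Gamma)$ and compactness of $\mathcal{E}(\Gamma)$ do real work, and this is the only point where the infinite case is not identical to the finite one. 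So your argument is the fleshed-out version of what the paper gestures at, with no substantive divergence.
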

\begin{proof}
  For a finite graph,
  see \cite[Theorem~1.9.4]{MR3644391}.
  It can be similarly shown for an infinite graph.
\end{proof}

$\mathcal{I}_{\Q_{-1}^{(2)}/\Q}(-1)$ is generated by
the singletons $\{\{p,q\}\}$ of the residue edges $\{p,q\}$,
and the triangles $\{\{p,q\},\{q,r\},\{r,p\}\}$
of the non-residue edges $\{p,q\},\{q,r\},\{r,p\}$.
Hence it suffices to calculate the sums of $c_{p,q}$
on these generators.

\begin{proposition}
  \label{prop:scholz}
  Let $p,q,r\in V$.
  \begin{enumerate}
    \item\label{it:scholz1}
    If $(p/q)=1$,
    \[
      \art{\varepsilon_p}{q}=\art{p}{q}_4\art{q}{p}_4.
    \]
    \item\label{it:scholz2}
    If $(p/q)=(q/r)=(r/p)=-1$,
    \[
      \art{\varepsilon_{pq}}{r}=-\art{pq}{r}_4\art{qr}{p}_4\art{rp}{q}_4.
    \]
  \end{enumerate}
\end{proposition}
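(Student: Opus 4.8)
The statement is Scholz's reciprocity law in~(\ref{it:scholz1}) and its companion for a ``non-residue triangle'' in~(\ref{it:scholz2}); I would prove both the same way, matching the left-hand (unit) side against the right-hand (quartic) side through the dihedral field $L$ of Proposition~\ref{prop:d8}. For the unit side, apply Proposition~\ref{prop:main} with $K=\Q$, $a=-1$, taking $(\mu,\nu)=(p,q)$, $\alpha=\varepsilon_p$, $b=1$ for~(\ref{it:scholz1}) and $(\mu,\nu)=(pq,pr)$, $\alpha=\varepsilon_{pq}$, $b=1$ for~(\ref{it:scholz2}). These are legitimate choices: $N_{\Q(\sqrt p)/\Q}(\varepsilon_p)=-1$ since $p\in V$, while the hypotheses $(p/q)=(q/r)=(r/p)=-1$ make $-1$ a norm everywhere locally from $\Q(\sqrt{pq})$ and --- by a classical criterion, as $(p/q)=-1$ --- force $N\varepsilon_{pq}=-1$; and since $\varepsilon$ is a unit the two hypotheses on $b$ hold vacuously. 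As $(p/q)=1$ (resp. $(pq/r)=1$), the index set $S_{\Q(\sqrt\mu)/\Q}\cap R_{\Q(\sqrt\nu)/\Q}$ is the single place above $q$ (resp. $r$), the set $R_{\Q(\sqrt\mu)/\Q}\cap R_{\Q(\sqrt\nu)/\Q}$ is empty, and the equivalence of Proposition~\ref{prop:main} becomes $\pi_{L/\Q}(-1)=1\iff\art{\varepsilon_p}{q}=1$ (resp. $\iff\art{\varepsilon_{pq}}{r}=1$).

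It then remains to show $\pi_{L/\Q}(-1)=1$ exactly when $(p/q)_4(q/p)_4=1$ (resp. when $(pq/r)_4(qr/p)_4(rp/q)_4=-1$); since both sides of each claimed identity lie in $\{\pm1\}$, this pins the identity down. For this I would bring in the classical Gauss--Jacobi sum machinery: writing $p=a^2+b^2$ and $q=c^2+d^2$ with $a,c$ odd, fixing quartic characters $\chi_p\bmod p$ and $\chi_q\bmod q$, and using that $J(\chi_p,\chi_p)$ and $J(\chi_q,\chi_q)$ are $(1+i)$-primary primes $\pi_p\mid p$, $\pi_q\mid q$ of $\Z[i]$, one rewrites $(p/q)_4$, $(q/p)_4$ through these Gaussian primes, and quartic reciprocity over $\Q(i)$ collapses their product to a quadratic residue symbol of a rational integer built from $a,b,c,d$. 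The last input is the classical identification of $\varepsilon_p$ with the Gaussian factorisation of $p$: from $t^2+4=pu^2$ (for $\varepsilon_p=\tfrac{t+u\sqrt p}{2}$) one reads $t+2i=(\text{unit})(1+i)\pi_p\gamma^2$ in $\Z[i]$, so the reduction of $\varepsilon_p$ modulo the prime above $q$ is, up to the square $\gamma^2$, exactly the integer the quartic side produced. (Closer to this paper's methods, one could instead evaluate $\pi_{L/\Q}(-1)=\prod_\ell\art{-1,L/\Q}{\ell}$ directly for a minimally ramified $L$: only $\ell\in\{2,p,q,\infty\}$ contributes, the places $2$ and $\infty$ cancel, and an explicit local, Lubin--Tate, computation identifies the Artin symbols of the unit $-1$ at the odd ramified primes of the degree-$8$ dihedral completions with the quartic residue symbols.)

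For~(\ref{it:scholz2}) the same scheme runs with $\Q(\sqrt{pq})$ in place of $\Q(\sqrt p)$: since $pq=\pi_p\bar\pi_p\pi_q\bar\pi_q$ in $\Z[i]$, a generator attached to $\varepsilon_{pq}$ carries one Gaussian prime from each conjugate pair, and expanding $\art{\varepsilon_{pq}}{r}$ along this factorisation and applying quartic reciprocity over $\Q(i)$ produces exactly the three symbols $(pq/r)_4$, $(qr/p)_4$, $(rp/q)_4$ in these combinations. The global sign $-1$ drops out because each of $p,q,r$ is a quadratic non-residue of the other two, so that the three Jacobi-sum evaluations each carry a $\chi(-1)=-1$ correction --- an odd number in all.

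I expect the real obstacle to be the quartic-side identifications, where all the sign subtleties live: the $2$-adic bookkeeping --- the possible halving $\varepsilon_p=\tfrac{t+u\sqrt p}{2}$ versus $\varepsilon_p=t+u\sqrt p$, the factors $\left(\tfrac{2}{q}\right)$, the conventions for $\left(\tfrac{\cdot}{2}\right)_4$ when $2\in\{p,q,r\}$, and the $(1+i)$-primary normalisation of the Gaussian primes --- and, in~(\ref{it:scholz2}), keeping straight which prime above each of $p,q,r$ enters which factor so that $(pq/r)_4$, $(qr/p)_4$, $(rp/q)_4$ --- rather than their conjugates --- occur, and confirming that the accumulated sign is genuinely $-1$ and not $+1$.
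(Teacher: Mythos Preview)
Your framing through $\pi_{L/\Q}(-1)$ is, as written, redundant: the Gauss--Jacobi\,/\,$\Z[i]$ computation you describe matches $(\varepsilon_p/q)$ against $(p/q)_4(q/p)_4$ \emph{directly}, so once that is done Proposition~\ref{prop:main} plays no role. (Your parenthetical alternative --- evaluating $\pi_{L/\Q}(-1)$ by a local computation at the ramified primes --- would make the detour genuine, and is in fact closer in spirit to what the paper does.) One small slip: in case~(\ref{it:scholz2}) with $\mu=pq,\ \nu=pr$, the set $R_{\Q(\sqrt\mu)/\Q}\cap R_{\Q(\sqrt\nu)/\Q}$ is \emph{not} empty --- it contains $p$ --- but since $b=1$ the right-hand product in Proposition~\ref{prop:main} is still $1$, so your conclusion survives.

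The paper takes a different and more self-contained route. Part~(\ref{it:scholz1}) is simply cited as Scholz's classical result. For~(\ref{it:scholz2}) the paper avoids $\Z[i]$ and the dihedral field $L$ entirely: it works instead in the \emph{cyclic quartic} field $F=\Q\bigl(\sqrt{e\,\varepsilon_{pq}\sqrt{pq}}\bigr)$, with $e=\pm1$ chosen (Lemma~\ref{lem:e}) so that $F/\Q$ is unramified outside $pq\infty$. The ordinary abelian product formula applied to $r$ and to $q$ in $F/\Q$ yields a relation among three local Artin symbols; Lemma~\ref{lem:quar} then converts the symbols at the totally ramified primes $p$ and $q$ into $(qr/p)_4$ and $(rp/q)_4(p/q)$, while the symbol at $r$ is identified with $(\varepsilon_{pq}/r)(pq/r)_4$ via the explicit generator of $F$. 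The extra factor $(p/q)=-1$ is exactly the global sign you were worried about, and it falls out mechanically rather than from counting $\chi(-1)$ corrections. This argument also handles $2\in\{p,q,r\}$ uniformly (with a short additional verification that $\art{-1,F/\Q}{2}=1$ when $p=2$), whereas your $\Z[i]$ approach --- workable in principle --- leaves precisely the $2$-adic normalisations and the sign bookkeeping as the points you yourself flag as the real obstacle.
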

(\ref{it:scholz1}) is known as Scholz's reciprocity law~\cite{MR1761696}.
We shall prove (\ref{it:scholz2}).
\begin{lemma}
  \label{lem:e}
  Let $m>2$ be a square-free odd integer
  such that $N\varepsilon_m=-1$.
  Then
  $\Q(\sqrt{(m/2)\varepsilon_m\sqrt m})/\Q(\sqrt m)$
  is unramified outside $m\infty$.
\end{lemma}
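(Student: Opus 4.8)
The plan is to prove the lemma by analysing the quadratic extension $F(\sqrt\beta)/F$ place by place, where $F=\Q(\sqrt m)$ and $\beta=\art{m}{2}\,\varepsilon_m\sqrt m\in\mathcal O_F$; write $\sigma$ for the nontrivial automorphism of $F/\Q$. Two identities carry most of the bookkeeping: from $N\varepsilon_m=-1$ we get $\varepsilon_m^\sigma=-\varepsilon_m^{-1}$, hence
\[
  N_{F/\Q}(\beta)=\beta\beta^\sigma=\art{m}{2}^2\,\varepsilon_m\varepsilon_m^{-1}(\sqrt m)^2=m,\qquad \beta/\beta^\sigma=\varepsilon_m^2\in F^{\times2}.
\]
Since $(\beta)=(\sqrt m)$, the extension $F(\sqrt\beta)/F$ is unramified at every finite place not above $2m$, and ramification above $m$ is allowed; so only the places above $2$ matter. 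I would first record that $N\varepsilon_m=-1$ forces $m\equiv1\pmod4$: if $m\equiv3\pmod4$ then $\mathcal O_F=\Z[\sqrt m]$ and $\varepsilon_m=x+y\sqrt m$ with $x^2-my^2=-1$, which is impossible modulo $4$. Consequently $2$ is unramified in $F/\Q$, every $\P\mid2$ has $F_\P/\Q_2$ unramified, and $v_\P(\beta)=0$. The local input I would use is that for $\Phi/\Q_2$ unramified, every $c\in1+4\mathcal O_\Phi$ makes $\Phi(\sqrt c)/\Phi$ unramified --- the image of $1+4\mathcal O_\Phi$ in $\Phi^\times/(\Phi^\times)^2$ has order two, the nontrivial class being the unramified one (this comes from the Artin--Schreier map $x\mapsto x^2+x$ on the residue field).

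In the case $m\equiv1\pmod8$, the prime $2$ splits, $\art{m}{2}=1$, and $\beta=\varepsilon_m\sqrt m$. Here $\varepsilon_m$ must be integral (the half-integral form is excluded modulo $8$), so $\varepsilon_m=x+y\sqrt m$ with $x^2-my^2=-1$; then $y$ is odd, $4\mid x$, and --- the clean point --- every prime divisor of $y$ divides $x^2+1$ and is therefore $\equiv1\pmod4$, so $y\equiv1\pmod4$. For $\P\mid2$ one has $F_\P=\Q_2$ and $\beta=x\sqrt m+ym\equiv ym\equiv1\pmod4$, so $F(\sqrt\beta)/F$ is unramified at $\P$. (The opposite sign would give $\equiv-1\pmod4$ and hence a ramified extension, which already explains why the sign $\art{m}{2}$ must be there.)

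In the case $m\equiv5\pmod8$, the prime $2$ is inert, $\art{m}{2}=-1$, $\beta=-\varepsilon_m\sqrt m$, and $\Phi:=F_\P$ is the unramified quadratic extension of $\Q_2$. Since $\beta/\beta^\sigma=\varepsilon_m^2$ is a square, the class of $\beta$ in $\Phi^\times/(\Phi^\times)^2$ is fixed by $\Gal(\Phi/\Q_2)$; being a unit class, it lies in $\{1,[-1],[\delta],[-\delta]\}$, where $\delta\in\mathcal O_\Phi^\times$ generates the unramified quadratic extension of $\Phi$. As $N_{\Phi/\Q_2}$ carries squares and $-1$ into $(\Q_2^\times)^2$ while $N_{F/\Q}(\beta)=m$ is not a square in $\Q_2$, the classes $1$ and $[-1]$ are ruled out, leaving $[\beta]\in\{[\delta],[-\delta]\}$; and $[-\delta]$ yields a ramified extension. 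Showing that in fact $[\beta]=[\delta]$ is the crux, and this is where the sign $\art{m}{2}$ has to be pinned down by an explicit $2$-adic computation --- for instance using $\varepsilon_m^3\in\Z[\sqrt m]$ (valid when $m\equiv5\pmod8$), the Pell relation, and the coefficient congruences as in the split case; or comparing with $m=5$, where $\beta_5=-\varepsilon_5\sqrt5=\tfrac{-5-\sqrt5}{2}$ and $\Q(\sqrt5)(\sqrt{\beta_5})=\Q(\zeta_5)$ (since $\zeta_5$ satisfies $X^2-\tfrac{\sqrt5-1}{2}X+1=0$ over $\Q(\sqrt5)$, of discriminant $\beta_5$), which is unramified over $\Q(\sqrt5)$ away from $5$, so $\beta_5$ represents $[\delta]$ locally and it suffices to check $\beta/\beta_5\in(\Phi^\times)^2$.

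The step I expect to be the main obstacle is precisely this last one. Reducing the inert case to "$[\beta]\in\{[\delta],[-\delta]\}$" is formal, but separating those two classes requires controlling $\varepsilon_m$ (equivalently $\beta$) at $2$ to higher precision than its square class, and that computation is exactly what forces the choice of sign $\art{m}{2}$ in the statement.
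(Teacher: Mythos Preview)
Your split case ($m\equiv1\pmod8$) is correct and is essentially the paper's computation specialized to that case: once $\varepsilon_m=x+y\sqrt m\in\Z[\sqrt m]$, the congruences $4\mid x$ and $y\equiv1\pmod4$ give $\beta\equiv1\pmod4$, and the $1+4\mathcal O_\Phi$ criterion finishes.

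The gap is the inert case $m\equiv5\pmod8$, which you leave explicitly open. Your norm argument pins $[\beta]$ down to $\{[\delta],[-\delta]\}$, but you do not separate these two classes. Of your two suggested routes, the comparison with $m=5$ does not actually help: proving $\beta/\beta_5\in(\Phi^\times)^2$ for a general $m\equiv5\pmod8$ requires exactly the same $2$-adic control of $\varepsilon_m$ that you are trying to avoid. The first route---passing to $\varepsilon_m^3$---is the right one, and it is precisely what the paper does.

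In fact the paper uses the cube \emph{uniformly}, with no case split at all. For every $m\equiv1\pmod4$ one has $\varepsilon_m^3=x+y\sqrt m\in\Z[\sqrt m]$; then $x^2-my^2=-1$ forces every prime divisor of $my$ to be $\equiv1\pmod4$, whence $y\equiv1\pmod4$ (taking $\varepsilon_m>1$), $x$ is even, and $4\mid x\iff m\equiv1\pmod8$. A direct check in $\mathcal O_F$ (using $1+\sqrt m\in2\mathcal O_F$ when $m\equiv5\pmod8$) then gives
\[
  \art{m}{2}\varepsilon_m^3\sqrt m=\art{m}{2}(my+x\sqrt m)\equiv1\pmod{4\mathcal O_F},
\]
and the standard Kummer criterion concludes. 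So the step you flagged as the main obstacle dissolves once you replace $\varepsilon_m$ by $\varepsilon_m^3$ and repeat the same congruence bookkeeping you already carried out in the split case; the Galois-invariance and norm detour in your inert analysis becomes unnecessary.
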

\begin{proof}
  It suffices to consider the ramification above $2$.
  Since $\varepsilon_m^3\in\Z[\sqrt m]$,
  we write $\varepsilon_m^3=x+y\sqrt m$ with $x,y\in\Z$.
  Since $x^2-my^2=-1$,
  we can show that any prime divisor $p$ of $my$ satisfies $p\equiv1\ (4)$.
  Thus $x$ is even, and $x\equiv0\ (4)$ if and only if $m\equiv1\ (8)$.
  Also we have $y\equiv1\ (4)$ since $y>0$.
  Therefore
  \[
    \art{m}{2}\varepsilon_m^3\sqrt m=\art{m}{2}(my+x\sqrt m)\equiv1\pmod4
  \]
  in the ring of integers of $\Q(\sqrt m)$.
  It is well-known that such a Kummer extension is unramified above $2$,
  see \cite[Theorem~4.12]{MR1761696}.
\end{proof}
\begin{lemma}
  \label{lem:quar}
  Let $p\in V$
  and $F$ be a cyclic quartic field
  which $p$ ramifies totally.
  In the case $p=2$ we also assume
  $\art{-1,F/\Q}{2}=1$.

  Then for $m\in\Z$ prime to $p$,
  \[
    \art{m,F/\Q}{p}=1\iff\art{m}{p}_4=1.
  \]
\end{lemma}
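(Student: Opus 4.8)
The plan is to reduce the statement to a fact about the unique quadratic subfield of $F$ together with the quartic residue symbol, by relating the local Artin symbol $\art{m,F/\Q}{p}$ to its square, which lands in $\Gal$ of that quadratic subfield. Write $F_0$ for the quadratic subfield of the cyclic quartic field $F$; since $p$ ramifies totally in $F$, it ramifies in $F_0$, so $F_0=\Q(\sqrt{p^*})$ when $p$ is odd (with $p^*=(-1)^{(p-1)/2}p$), and $F_0=\Q(\sqrt{2}),\Q(\sqrt{-2})$ or $\Q(i)$ when $p=2$; the extra hypothesis $\art{-1,F/\Q}{2}=1$ forces the ramified case that is compatible with a cyclic quartic, which one checks is $F_0=\Q(\sqrt{\pm2})$ and pins down the relevant congruence class.

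First I would use that $\Gal(F/\Q)\cong\Z/4$ is totally ramified at $p$, so the local Artin map $\Q_p^\times\to\Gal(F/\Q)$ is surjective with kernel $N_{F_P/\Q_p}F_P^\times$, and it factors through $\Q_p^\times/(\Q_p^\times)^4$ composed with the appropriate identification — more precisely, since $F_P/\Q_p$ is totally ramified cyclic of degree $4$, an element $m\in\Z_p^\times$ is a norm if and only if it is a norm from the degree-$2$ subextension AND the quotient obstruction vanishes, and this is governed exactly by whether $m$ is a fourth power locally in the relevant sense. Concretely: $\art{m,F/\Q}{p}^2=\art{m,F_0/\Q}{p}$, which is trivial iff $(m/p)=1$; granting that, $\art{m,F/\Q}{p}=1$ iff $m$ is a local norm from $F_P$, and for a totally ramified $\Z/4$-extension with $p\nmid m$ this is equivalent to $m$ being a fourth power in $\Q_p^\times$ up to the norm group, i.e. iff $\art{m}{p}_4=1$. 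The cleanest route is to invoke that the ray class field / local reciprocity identifies the order-$4$ character cutting out $F$ at $p$ with the quartic residue character $\art{\cdot}{p}_4$ (for $p$ odd, $p\equiv1\ (4)$ is forced since only then does $\Q_p$ admit a ramified cyclic quartic with $p\nmid$ the conductor appropriately; for $p=2$ the hypothesis on $\art{-1,F/\Q}{2}$ is what makes the character the one matching $\art{\cdot}{2}_4$ on $\pm1\ (8)$).

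So the key steps in order are: (1) identify $F_0$ and, for $p=2$, use $\art{-1,F/\Q}{2}=1$ to determine $F_0$ and the congruence class where the symbol $\art{m}{2}_4$ is defined; (2) observe $\art{m,F/\Q}{p}^2=\art{m,F_0/\Q}{p}$, so the assertion is automatic (both sides "false") unless $(m/p)=1$, which we may assume; (3) under $(m/p)=1$, show the order-$4$ character $\Q_p^\times\to\mu_4$ attached to $F$ agrees on units with $m\mapsto\art{m}{p}_4$ — for odd $p$ this is the definition of the quartic residue symbol via the unique quartic character of $(\Z/p)^\times$, and for $p=2$ it is the normalization baked into $\art{m}{2}_4$ on $m\equiv\pm1\ (8)$, selected by the condition $\art{-1,F/\Q}{2}=1$. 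The main obstacle is step (3) at $p=2$: making precise that among the cyclic quartic $2$-adic extensions, the hypothesis $\art{-1,F/\Q}{2}=1$ singles out exactly the one whose associated character restricted to $\{\pm1\bmod 8\}\subset\Z_2^\times$ is $m\mapsto\art{m}{2}_4$; this requires a short explicit computation with the known structure of $\Q_2^\times/(\Q_2^\times)^4$ and of the ramified $\Z/4$-extensions of $\Q_2$. For odd $p$ everything is formal from local class field theory and the definition of $\art{\cdot}{p}_4$.
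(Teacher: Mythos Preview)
Your step~(3) is the whole proof, and it is exactly what the paper does: since $p$ is totally ramified in the cyclic quartic $F$, local class field theory gives a surjection $\Z_p^\times\twoheadrightarrow\Gal(F/\Q)\cong\Z/4$, and one simply identifies the kernel. For odd $p\equiv1\ (4)$ the paper notes $[\Z_p^\times:(\Z_p^\times)^4]=4$, so the kernel is forced to be $(\Z_p^\times)^4$, which is precisely the condition $(m/p)_4=1$. For $p=2$ the paper writes $\Z_2^\times=\langle-1\rangle\times\langle5\rangle$; the hypothesis $\art{-1,F/\Q}{2}=1$ says $-1$ lies in the kernel, hence the kernel is $\langle-1\rangle\times\langle5^4\rangle=\{u:u\equiv\pm1\ (16)\}$, matching the definition of $(m/2)_4$. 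So your ``short explicit computation'' at $p=2$ is genuinely short.

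Your steps~(1) and~(2) are detours. Nothing in the argument requires knowing the global quadratic subfield $F_0$, and in fact your identification ``$F_0=\Q(\sqrt{p^*})$ when $p$ is odd'' is wrong: the lemma does not assume $p$ is the only ramified prime in $F$, so $F_0$ can be $\Q(\sqrt{\pm p\cdot m})$ for various $m$. This error is harmless because only the local picture at $p$ matters, but it signals that the global detour is misdirected. Likewise the reduction via $\art{m,F/\Q}{p}^2=\art{m,F_0/\Q}{p}$ is correct but buys nothing: once you know the kernel on $\Z_p^\times$ is the fourth powers (resp.\ $\langle-1\rangle\times\langle5^4\rangle$), the equivalence with $(m/p)_4=1$ is immediate without passing through the quadratic layer. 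Drop (1)--(2) and state (3) directly; that is the paper's two-line proof.
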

\begin{proof}
  From the local class field theory,
  the local Artin map induces a surjective homomorphism
  \[
    \Z_p^\times\longrightarrow\Gal(F/\Q).
  \]
  If $p\equiv1\ (4)$, the kernel is $(\Z_p^\times)^4$
  since $[\Z_p^\times:(\Z_p^\times)^4]=4$.
  If $p=2$, $\Z_2^\times$ can be written as the direct product $\braket{-1}\times\braket{5}$,
  hence from our assumption,
  the kernel is $\braket{-1}\times\braket{5}^4$.
  From these we have the result.
\end{proof}
\begin{proof}[Proof of Proposition~\ref{prop:scholz} (\ref{it:scholz2})]
  It is well-known that the assumption $(p/q)=-1$
  implies $N\varepsilon_{pq}=-1$.
  We may assume $\varepsilon_{pq}>1$.

  If $pq$ is odd, we put $e=(pq/2)$.
  If $p=2$ or $q=2$, we may assume $p=2$, and put $e=(q/2)$.
  Let $F=\Q(\sqrt{e\varepsilon_{pq}\sqrt{pq}})$,
  then from Lemma~\ref{lem:e}, $F/\Q$ is unramified outside $pq\infty$.

  The norm of $e\varepsilon_{pq}\sqrt{pq}$ is $pq$,
  hence $F$ is a cyclic quartic field.
  From the product formula of Artin symbols we have
  \[
    \art{r,F/\Q}{p}
    \art{r,F/\Q}{q}
    \art{r,F/\Q}{r}=1,
  \]
  here the archimedean place can be omitted since $r>0$.
  Again from the product formula,
  \[
    \art{q,F/\Q}{p}
    \art{q,F/\Q}{q}=1.
  \]
  Combining these we have
  \[
    \art{qr,F/\Q}{p}
    \art{qr,F/\Q}{q}
    \art{r,F/\Q}{r}=1.
    \label{eq:comb}
  \]
  Let $\gamma$ be the generator of $\Gal(F/\Q(\sqrt{pq}))$.
  If $p=2$, from the product formula,
  \[
    \art{-1,F/\Q}{2}
    \art{-1,F/\Q}{q}
    \art{-1,F/\Q}{\infty}=1.
  \]
  Lemma~\ref{lem:quar} shows that
  the last two factors are both trivial or both $\gamma$, hence we have
  \[
    \art{-1,F/\Q}{2}=1.
  \]
  So we can use Lemma~\ref{lem:quar} again for general $p$, which yields
  \[
    \art{qr,F/\Q}{p}
    =\begin{cases}
      1&((qr/p)_4=+1)\\\gamma&((qr/p)_4=-1).
    \end{cases}
  \]
  Since $pq$ is the norm of $\sqrt{e\varepsilon_{pq}\sqrt{pq}}$, we have
  \[
    \art{pq,F/\Q}{q}=1.
  \]
  Hence from Lemma~\ref{lem:quar},
  \[
    \art{qr,F/\Q}{q}
    =\art{p^3r,F/\Q}{q}
    =\begin{cases}
      1&((pr/q)_4(p/q)=+1)\\\gamma&((pr/q)_4(p/q)=-1).
    \end{cases}
  \]
  Also it is easy to show that
  \[
    \art{r,F/\Q}{r}=\art{r,e\varepsilon_{pq}\sqrt{pq}}{\mathfrak{r}}=\begin{cases}
      1&((\varepsilon_{pq}/r)(pq/r)_4=+1)\\
      \gamma&((\varepsilon_{pq}/r)(pq/r)_4=-1).
    \end{cases}
  \]
  where $\mathfrak{r}$ is a place of $\Q(\sqrt{pq})$ above $r$.
  Substituting these into the equality~\eqref{eq:comb}
  we obtain the result.
\end{proof}

Now we obtain the main theorem in the case of $K=\Q$ and $a=-1$:
\begin{theorem}
  \label{thm:qmain}
  Let $(c_{p,q})_{\{p,q\}}$ be that in the equality~\eqref{eq:qcpq}.
  \begin{enumerate}
    \item For $p,q\in V$ with $(p/q)=1$,
    \[
      c_{p,q}=0\iff\art{p}{q}_4\art{q}{p}_4=1.
    \]
    \label{it:qmain1}
    \item For $p,q,r\in V$ with $(p/q)=(q/r)=(r/p)=-1$,
    \[
      c_{p,q}+c_{q,r}+c_{r,p}=0\iff\art{pq}{r}_4\art{qr}{p}_4\art{rp}{q}_4=-1.
    \]
    \label{it:qmain2}
  \end{enumerate}
  More generally,
  if $I\in\mathcal{I}_{\Q_{-1}^{(2)}/\Q}(-1)$ and
  $P$ is the union set of $I$, i.e.,
  \[
    P=\set{p\in V|\{p,q\}\in I\quad\exists q\in V},
  \]
  then
  \[
    \sum_{\{p,q\}\in I}c_{p,q}=0\iff\prod_{p\in P}\art{\prod_{\{p,q\}\in I}q}{p}_4=(-1)^k
  \]
  where $k$ is the number of $\{p,q\}\in I$ with $(p/q)=-1$.
\end{theorem}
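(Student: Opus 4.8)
The plan is to prove the three assertions together, deriving the first two as special cases of the general formula and handling the general formula by reducing it, via the structure theorem for $\mathcal{I}_{\Q_{-1}^{(2)}/\Q}(-1)$ in Theorem~\ref{thm:graph}, to the two types of generators. Concretely, by Theorem~\ref{thm:graph} every $I\in\mathcal{I}_{\Q_{-1}^{(2)}/\Q}(-1)$ decomposes as a symmetric difference of singletons $\{\{p,q\}\}$ with $(p/q)=1$ (the residue edges, spanning $\mathcal{G}(\Gamma_{\mathrm R})$) and triangles $\{\{p,q\},\{q,r\},\{r,p\}\}$ with $(p/q)=(q/r)=(r/p)=-1$ (the non-residue triangles, spanning $\mathcal{C}(\Gamma_{\mathrm N})$). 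Since $I\mapsto\sum_{\{p,q\}\in I}c_{p,q}$ is $\F_2$-linear on $\mathcal{I}_{\Q_{-1}^{(2)}/\Q}(-1)$, and since the right-hand side $\prod_{p\in P}\bigl(\prod_{\{p,q\}\in I}q\,/\,p\bigr)_4$ is seen (after the computation below) to be multiplicative under symmetric difference of the index sets, it is enough to verify assertions (\ref{it:qmain1}) and (\ref{it:qmain2}); the general statement then follows by adding up, the sign $(-1)^k$ appearing because each non-residue triangle contributes a factor $-1$ and each residue singleton contributes $+1$.

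For (\ref{it:qmain1}) I would apply Proposition~\ref{prop:trip} with $K=\Q$, $a=-1$, $\mu=p$, $\nu=q$ (the hypotheses of Proposition~\ref{prop:d8} hold here, as already checked in the proof of Theorem~\ref{thm:graph}), which gives $\pi_{L/\Q}(-1)=1 \iff c_{p,q}=0$ for the associated $D_8$-field $L$. On the other hand I would compute $\pi_{L/\Q}(-1)$ via Proposition~\ref{prop:main}: choose $\alpha\in\Q(\sqrt p)^\times$ with $N_{\Q(\sqrt p)/\Q}(\alpha)=-1$, which one may take to be (a power of) the fundamental unit $\varepsilon_p$ since $N\varepsilon_p=-1$ when $(p/q)=1$ forces nothing — more carefully, $N\varepsilon_p=-1$ holds whenever $p$ is prime and $p\not\equiv3\ (4)$ except one must be a little careful, so I would take $\alpha=\varepsilon_p$ when $N\varepsilon_p=-1$ and adjust by a suitable $b\in\Q^\times$ otherwise to meet the valuation conditions (i) and (ii) of Proposition~\ref{prop:main}. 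Then Proposition~\ref{prop:main} identifies $\pi_{L/\Q}(-1)=1$ with an equality of products of Hilbert symbols $\art{b\varepsilon_p,q}{\P}$ over $\p\in S_{\Q(\sqrt p)/\Q}\cap R_{\Q(\sqrt q)/\Q}$; since $q$ ramifies only at $q$ (and $\infty$, which is irrelevant as $q>0$), this collapses to the single symbol $\art{\varepsilon_p,q}{\P}=\art{\varepsilon_p}{q}$, and Scholz's reciprocity law (Proposition~\ref{prop:scholz}(\ref{it:scholz1})) rewrites it as $\art{p}{q}_4\art{q}{p}_4$. This yields $c_{p,q}=0\iff\art{p}{q}_4\art{q}{p}_4=1$.

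For (\ref{it:qmain2}) the strategy is the same but with $\mu=pq$, $\nu=pr$ (again the hypotheses were verified in the proof of Theorem~\ref{thm:graph}): Proposition~\ref{prop:trip} now gives $\pi_{L/\Q}(-1)=1\iff c_{p,q}+c_{q,r}+c_{r,p}=0$ because the generator $C$ of $\mathcal{I}_{L/\Q}(-1)$ restricts, under our choice of $V_1,V_2,V_3$ keyed to the signs of $\sigma_v$ on $\sqrt p,\sqrt q,\sqrt r$, to exactly the triangle $\{\{p,q\},\{q,r\},\{r,p\}\}$ — this bookkeeping with the three sets $V_i$ is the one slightly fiddly point. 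Then Proposition~\ref{prop:main} with $\alpha$ chosen so that $N_{\Q(\sqrt{pq})/\Q}(\alpha)=-1$ (take $\alpha=\varepsilon_{pq}$, using the well-known fact $N\varepsilon_{pq}=-1$ when $(p/q)=-1$, recalled in the proof of Proposition~\ref{prop:scholz}(\ref{it:scholz2})) again collapses the product to a single Hilbert symbol at the prime $r$, namely $\art{\varepsilon_{pq},pr}{\mathfrak r}$, which one computes to be $\art{\varepsilon_{pq}}{r}$; Proposition~\ref{prop:scholz}(\ref{it:scholz2}) evaluates this as $-\art{pq}{r}_4\art{qr}{p}_4\art{rp}{q}_4$. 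This gives $c_{p,q}+c_{q,r}+c_{r,p}=0\iff\art{pq}{r}_4\art{qr}{p}_4\art{rp}{q}_4=-1$.

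The main obstacle I anticipate is not any single hard computation but rather the careful tracking of the auxiliary element $b\in\Q^\times$ required by Proposition~\ref{prop:main}: one must choose $b$ meeting the valuation conditions (i) and (ii), check that the ``other'' product $\prod_{\p\in R_{\Q(\sqrt\mu)/\Q}\cap R_{\Q(\sqrt\nu)/\Q}}\art{b,\nu}{\p}$ either vanishes or is absorbed, and verify that replacing $\alpha$ by $b\alpha$ does not change the final quartic-residue expression. When $\mu=pq$ with $p$ or $q$ equal to $2$ there are the usual $2$-adic subtleties (this is exactly why Lemma~\ref{lem:e} and Lemma~\ref{lem:quar} carry the separate $p=2$ hypotheses), so I would treat the $2$-adic case explicitly, using $e=(m/2)$ or $e=(q/2)$ as in the proof of Proposition~\ref{prop:scholz}(\ref{it:scholz2}) to normalize, and confirm that $\art{-1,F/\Q}{2}=1$ so that Lemma~\ref{lem:quar} applies. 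Once these local choices are pinned down, assembling the general $I$ from its generators and matching $(-1)^k$ against the count of non-residue edges is routine $\F_2$-linear algebra.
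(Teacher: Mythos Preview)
Your treatment of (\ref{it:qmain1}) and (\ref{it:qmain2}) is essentially the paper's: the same propositions with the same choices $\mu=p,\nu=q$ (resp.\ $\mu=pq,\nu=pr$) and $\alpha=\varepsilon_p$ (resp.\ $\varepsilon_{pq}$), followed by Proposition~\ref{prop:scholz}. Your concerns about $b$ and $2$-adic issues are unnecessary: the paper simply takes $b=1$, which satisfies the hypotheses of Proposition~\ref{prop:main} because fundamental units have valuation zero everywhere, and $N\varepsilon_p=-1$ holds for every prime $p\not\equiv3\pmod4$, so no adjustment of $\alpha$ is needed. The $2$-adic subtleties were already absorbed into the proof of Proposition~\ref{prop:scholz}.

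For the general statement your route differs from the paper's. You want to decompose $I$ abstractly into residue singletons and non-residue triangles via Theorem~\ref{thm:graph} and then add, relying on the claim that $I\mapsto\prod_{p\in P}\bigl(\prod_{\{p,q\}\in I}q\,/\,p\bigr)_4$ is multiplicative under symmetric difference. That multiplicativity is true but not free: an edge $\{p,q\}\in I_1\cap I_2$ contributes a correction $(q^2/p)_4(p^2/q)_4=(q/p)(p/q)$, and one must invoke $(q/p)=(p/q)$ for all $p,q\in V$ (quadratic reciprocity, with the case $p=2$ checked against the paper's convention for $(m/2)$) to see these vanish; you assert this step rather than carry it out. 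The paper instead fixes a single auxiliary prime $l$ with $(p/l)=-1$ for all $p\in P$ (Dirichlet), completes every non-residue edge $\{p,q\}\in I$ to the triangle $\{p,q,l\}$, applies (\ref{it:qmain1}) to the residue edges and (\ref{it:qmain2}) to these triangles, and then shows that all $c_{p,l}$- and $(\cdot/l)_4$-contributions cancel because the number $d_p$ of non-residue edges at each $p\in P$ is even. Your approach is conceptually cleaner but hides the reciprocity input inside the unverified multiplicativity; the paper's is more hands-on and makes the cancellation mechanism explicit via the common vertex $l$.
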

\begin{proof}
  \indent
  \begin{enumerate}
    \item From Proposition~\ref{prop:d8} with $K=\Q,\,a=-1,\,\mu=p,\,\nu=q$,
    there exists a dihedral field $L$ of degree $8$ with $L\subseteq\Q_{-1}^{(2)}$.
    Using Proposition~\ref{prop:trip} and \ref{prop:main}
    with $\alpha=\varepsilon_p,\,b=1$,
    we have
    \[
      c_{p,q}=0\iff\pi_{L/K}(-1)=1\iff\art{\varepsilon_p}{q}=1.
    \]
    Combining Proposition~\ref{prop:scholz} we get the result.
    \item Similarly we set $K=\Q,\,a=1,\,\mu=pq,\,\nu=pr,\,\alpha=\varepsilon_{pq},\,b=1$,
    from the propositions we have
    \[
      c_{p,q}+c_{q,r}+c_{r,p}=0\iff\pi_{L/K}(-1)=1\iff\art{\varepsilon_{pq}}{r}=1.
    \]
    Combining Proposition~\ref{prop:scholz} we get the result.
  \end{enumerate}

  The last part of the theorem can be shown as follows.
  Take $l\in V$ such that $(p/l)=-1$ for all $p\in P$,
  which exists from Dirichlet's theorem.
  From the above (\ref{it:qmain1}) and (\ref{it:qmain2}),
  \[
    \label{eq:c}
    \sum_{\begin{smallmatrix}
      \{p,q\}\in I\\(p/q)=1
    \end{smallmatrix}}c_{p,q}+
    \sum_{\begin{smallmatrix}
      \{p,q\}\in I\\(p/q)=-1
    \end{smallmatrix}}(c_{p,q}+c_{p,l}+c_{q,l})=0
  \]
  if and only if
  \[
    \label{eq:a4}
    \prod_{\begin{smallmatrix}
      \{p,q\}\in I\\(p/q)=1
    \end{smallmatrix}}\art{p}{q}_4\art{q}{p}_4
    \prod_{\begin{smallmatrix}
      \{p,q\}\in I\\(p/q)=-1
    \end{smallmatrix}}\art{pl}{q}_4\art{ql}{p}_4\art{pq}{l}_4=(-1)^k.
  \]
  The left-hand side of \eqref{eq:c} equals
  \[
    \sum_{\{p,q\}\in I}c_{p,q}+\sum_{p\in P}d_pc_{p,l}
  \]
  and the left-hand side of \eqref{eq:a4} equals
  \[
    \prod_{p\in P}\art{l^{d_p}\prod_{\{p,q\}\in I}q}{p}_4\art{p^{d_p}}{l}_4,
  \]
  where $d_p$ is the number of edges $\{p,q\}\in I$ with $(p/q)=-1$,
  which is even because $\mathcal{C}(\Gamma_\mathrm{N})$ is generated by cycles.
  Therefore $(l^{d_p}/p)_4(p^{d_p}/l)_4=(l/p)^{d_p/2}(p/l)^{d_p/2}=1$
  so we have done.
\end{proof}

\section{Application on central extensions}

Let $L/K$ be an abelian extension of number fields.
An extension $F$ of $L$ is called a central extension of $L/K$
if $F/K$ is Galois and $\Gal(F/L)$ is contained in the center of $\Gal(F/K)$.

Let $Z$ be the central class field of $L/K$,
i.e., the maximal central extension of $L/K$ which is unramified over $L$.
Also let $G$ be the genus field of $L/K$,
i.e., the maximal abelian extension of $K$ which is unramified over $L$.
The degree of $G/L$ is called the genus number,
which can be calculated by the famous formula \cite{nmj/1118802021}.

In \cite{MR720859}, especially in the case of $K=\Q$,
the class group of $L$ is examined by analyzing $\Gal(Z/G)$.
In this section we shall generalize it for arbitrary $K$,
by using our results on the products of Artin symbols.

Also we shall use the following notation:
\begin{itemize}
  \item $J_K$ denotes the id\`ele group of $K$.
  \item $U_K$ denotes the unit id\`ele group of $K$.
  \item $E_K$ denotes the global unit group of $K$.
\end{itemize}

\begin{theorem}
  \label{thm:cent}
  Let $L/K$ be an abelian extension of number fields and
  $Z$ be the central class field of $L/K$.
  Let $F$ be the maximal central extension of $L/K$ such that
  all the decomposition groups of $F/K$ are abelian.
  Then
  \[
    F\subseteq\bigcap_{a\in K^\times\cap NJ_L}K_a
  \]
  and the map $\pi_{F/K}$ induces isomorphisms
  \begin{align}
    K^\times\cap NJ_L\Big/NL^\times&\simeq\Gal(F/K^\ab),\\
    (E_K\cap NU_L)NL^\times\Big/NL^\times&\simeq\Gal(F/ZK^\ab)
  \end{align}
  where $N=N_{L/K}\colon J_L\to J_K$ is the norm map.
\end{theorem}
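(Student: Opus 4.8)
The plan is to realize $\Gal(F/K)$ via idèles and then interpret the product formula $\pi_{F/K}$ as the obstruction map. First I would set up the class-field-theoretic dictionary: by global class field theory $\Gal(L/K)\simeq J_K/K^\times NJ_L$, and any central extension $F$ of $L/K$ corresponds to a closed subgroup of $\Gal(F/K)$ sitting in a central extension of $\Gal(L/K)$; the maximality of $F$ among such extensions with abelian decomposition groups identifies $\Gal(F/K)$ with the analogous quotient of the absolute Galois group, so that $F\subseteq K_a$ for the relevant $a$ follows directly from the definition of $K_a$ once we check the two conditions. The containment $F\subseteq\bigcap_{a\in K^\times\cap NJ_L}K_a$ should come from the observation that for $a\in K^\times\cap NJ_L$, the global Artin symbol $\art{a,L/K}{}$ — more precisely the idèle-class image — is trivial on $L$, hence $\art{a,F/K}{\p}$ lands in $\Gal(F/L)\subseteq Z(\Gal(F/K))$, verifying condition (ii), while condition (i) is the defining property of $F$.

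Next I would construct the first isomorphism. Given $a\in K^\times\cap NJ_L$, the element $\pi_{F/K}(a)$ lies in $[\Gal(F/K),\Gal(F/K)]\subseteq\Gal(F/L)$ (since $F/L$ is central, the commutator subgroup is killed in the abelianization, which contains $\Gal(L/K)$, so actually $[\Gal(F/K),\Gal(F/K)]\subseteq\Gal(F/K^\ab)$). The map $a\mapsto\pi_{F/K}(a)$ is a homomorphism on $K^\times\cap NJ_L$ because each local symbol is, and the product is commutative by condition (ii). The key computation is that $\pi_{F/K}$ kills $NL^\times$: for $a=N_{L/K}(\beta)$ with $\beta\in L^\times$, Lemma~\ref{lem:fund} (suitably iterated up the abelian tower $L/K$, or in the general non-abelian central case by the same norm-compatibility of local symbols) expresses $\pi_{F/K}(a)$ as a product of commutators $[\sigma,\art{\beta,F/L}{\P}]$, and since $\beta\in L^\times$ its global Artin symbol in $F/L$ satisfies the product formula $\prod_\P\art{\beta,F/L}{\P}=1$ — but one needs each individual commutator to vanish, which follows because $\art{\beta,F/L}{\P}$ commutes with everything once we are careful, OR because the total product telescopes. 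So $\pi_{F/K}$ factors through $K^\times\cap NJ_L/NL^\times$. Surjectivity onto $\Gal(F/K^\ab)$ and injectivity should then follow by a counting/duality argument: the target $\Gal(F/K^\ab)=[\Gal(F/K),\Gal(F/K)]$ is dual (in the sense of the pairing with $\mathcal{I}_{F/K}(a)$ developed in Section~2) to exactly the data that $\pi$ detects, and the idèle-class computation $J_K/K^\times NJ_L U_K'$-type quotients match up. Concretely I would show $\ker\pi_{F/K}=NL^\times$ by exhibiting, for any $a\notin NL^\times$, a quadratic or dihedral subextension witnessed by Propositions~\ref{prop:d8} and~\ref{prop:trip} on which $\pi$ is nontrivial.

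For the second isomorphism I would restrict attention to units: $E_K\cap NU_L$ consists of global units that are everywhere local norms. The central class field $Z$ is the maximal central extension unramified over $L$, so $\Gal(F/ZK^\ab)$ is the part of $\Gal(F/K^\ab)$ cut out by the unramified-over-$L$ condition. Under the first isomorphism, I would show that the subgroup $(E_K\cap NU_L)NL^\times/NL^\times$ maps precisely onto $\Gal(F/ZK^\ab)$: an element $a$ which is a unit idèle norm gives a $\pi_{F/K}(a)$ that is "unramified" in the appropriate sense because all the local symbols $\art{a,F/K}{\p}$ at ramified places become trivial (units are norms locally there by hypothesis), matching the defining property of $Z$. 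This is essentially the genus-theory translation of \cite{MR720859}, now phrased through $\pi$. The main obstacle I anticipate is the precise bookkeeping in the norm-vanishing step $\pi_{F/K}(NL^\times)=1$ in the genuinely non-abelian central situation: Lemma~\ref{lem:fund} is stated for a single quadratic step $M/K$, and pushing it through a tower (or directly) requires checking that the relevant local symbols $\art{\beta,F/L}{\P}$ interact correctly with the chosen lifts $\sigma$, i.e., that the commutators one gets genuinely assemble into the global product formula over $L$ rather than leaving a residual term. Verifying surjectivity — that every element of $\Gal(F/K^\ab)$ is hit — will also need the explicit dihedral-degree-8 witnesses, so the argument leans heavily on Section~3.
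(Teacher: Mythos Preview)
Your sketch contains the right intuitions for the containment $F\subseteq K_a$, but the heart of the argument---the two isomorphisms---has genuine gaps that your own hedging (``OR because the total product telescopes'', ``the main obstacle I anticipate'') already signals.

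The paper does \emph{not} attack $\ker\pi_{F/K}=NL^\times$ via Lemma~\ref{lem:fund}, nor surjectivity via the dihedral witnesses of Section~3. Instead it works entirely on the id\`ele side of $L$. Using the preparatory lemma, one identifies $F$ with the closed subgroup $L^\times H_{L/K}$ of $J_L$ (where $H_{L/K}=\ker N_{L/K}$) and $K^\ab$ with $L^\times N^{-1}K^\times$; closedness requires a compactness argument and the finiteness of $\hat H^{-1}(\Gal(L/K),C_L)$. One then checks, by choosing for each $a\in K^\times\cap NJ_L$ an id\`ele $\mathbf a$ supported on one place above each $\p$ with $N\mathbf a=a$, that $\pi_{F/K}(a)=\art{F/L}{\mathbf a}$. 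This places $\pi_{F/K}$ into a commutative square whose other three sides are isomorphisms: $N\colon N^{-1}K^\times/L^\times H_{L/K}\to (K^\times\cap NJ_L)/NL^\times$ (obvious), the Artin map $N^{-1}K^\times\widetilde D_L/H_{L/K}\widetilde D_L\to\Gal(F/K^\ab)$ (class field theory), and the left vertical map, whose injectivity is exactly the vanishing $\hat H^{-1}(\Gal(L/K),D_L)=1$. The second isomorphism then drops out by locating $ZK^\ab$ as $N^{-1}E_K\widetilde D_L$.

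Your approach breaks down at two points. First, Lemma~\ref{lem:fund} requires $[M:K]=2$; iterating it up an abelian tower is not straightforward, and your alternative---that $\art{\beta,F/L}{\P}$ is central so the commutators vanish---proves too much (it would make $\pi_{F/K}$ identically trivial, since every $a\in K^\times\cap NJ_L$ is an everywhere-local norm, not just those in $NL^\times$). The actual kernel computation is encoded in the Tate cohomology vanishing, which you never invoke. Second, the dihedral fields of Propositions~\ref{prop:d8}--\ref{prop:trip} only detect $2$-torsion commutators coming from biquadratic sub-extensions; for a general abelian $L/K$ they cannot generate all of $\Gal(F/K^\ab)$, so neither injectivity nor surjectivity can be established that way.
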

Note that $F/L$ is infinite but
$F/K^\ab$ is finite, since the commutator map induces
the well-defined surjection
\[
  \Gal(L/K)\wedge\Gal(L/K)\longrightarrow\Gal(F/K^\ab)
\]
from the exterior square of $\Gal(L/K)$, which is finite.

To prove it we prepare the following lemma:
\begin{lemma}
  Let $M$ be a finite abelian extension of $L$ with $M/K$ Galois.
  Then
  \begin{enumerate}
    \item $M/L$ is unramified iff $U_L\subseteq L^\times N_{M/L}J_M$;
    \label{it:ideleur}
    \item $M/K$ is abelian iff $N_{L/K}^{-1}K^\times\subseteq L^\times N_{M/L}J_M$;
    \label{it:ideleab}
    \item $M$ is a central extension of $L/K$ iff $A_{L/K}\subseteq L^\times N_{M/L}J_M$,
    where $A_{L/K}$ is the subgroup of $J_L$ generated by $\sigma\mathbf{a}/\mathbf{a}$
    for $\mathbf{a}\in J_L,\,\sigma\in\Gal(L/K)$;
    \label{it:idelecent}
    \item $M$ is a central extension of $L/K$ and all the decomposition group of $M/K$ are abelian
    iff $H_{L/K}\subseteq L^\times N_{M/L}J_M$,
    where $H_{L/K}$ is the kernel of $N_{L/K}\colon J_L\to J_K$;
    \label{it:idelef}
    \item For a place $\P$ of $L$, the decomposition group of $M/K$ above $\P$ is abelian
    iff $H_{L/K,\P}\subseteq L^\times N_{M/L}J_M$,
    where $H_{L/K,\P}=H_{L/K}\cap L_\P^\times$.
    \label{it:ideleloc}
  \end{enumerate}
\end{lemma}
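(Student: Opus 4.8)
The five statements are all of the same shape: a field-theoretic condition on $M$ is translated into the assertion that a specified subgroup of $J_L$ lies inside the norm group $L^\times N_{M/L}J_M$. Since $M/L$ is finite abelian, class field theory gives the Artin isomorphism $J_L/L^\times N_{M/L}J_M \simeq \Gal(M/L)$, and containment of a subgroup in $L^\times N_{M/L}J_M$ is exactly the statement that that subgroup maps to $1$ under the Artin map for $M/L$. So throughout I will identify $\Gal(M/L)$ with $J_L/L^\times N_{M/L}J_M$ and argue on the idèle side.

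For (\ref{it:ideleur}): $M/L$ is unramified at a place $\P$ iff the local unit group $\mathcal{O}_{L_\P}^\times$ is in the kernel of the local Artin map $L_\P^\times \to \Gal(M_\P/L_\P) \subseteq \Gal(M/L)$; assembling over all $\P$, $M/L$ is everywhere unramified iff $U_L$ maps to $1$ globally, i.e.\ $U_L \subseteq L^\times N_{M/L}J_M$. For (\ref{it:ideleab}): $\Gal(M/K)$ is abelian iff $\Gal(M/L) \subseteq \Gal(M/K)$ is normal with abelian quotient action — more precisely, since $\Gal(M/L)$ is already abelian, $M/K$ is abelian iff $\Gal(L/K)$ acts trivially on $\Gal(M/L)$. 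Transporting the $\Gal(L/K)$-action through the Artin isomorphism, this says $\sigma \mathbf a/\mathbf a \mapsto 1$ for all $\mathbf a \in J_L$, $\sigma \in \Gal(L/K)$; but those elements are exactly $N_{L/K}^{-1}K^\times$ modulo $L^\times$ after one checks (using $N_{L/K}\mathbf a = N_{L/K}(\sigma\mathbf a)$ and Hilbert 90 for idèles, i.e.\ $H^1$ of the idèle group vanishes) that $\langle \sigma\mathbf a/\mathbf a\rangle L^\times = (N_{L/K}^{-1}K^\times)$. So (\ref{it:ideleab}) follows. For (\ref{it:idelecent}): $M$ is central over $L/K$ iff $\Gal(M/L)$ is centralized by all of $\Gal(M/K)$; since $\Gal(M/L)$ is abelian this is the condition that $\Gal(M/K)/\Gal(M/L) = \Gal(L/K)$ acts trivially on $\Gal(M/L)$, which is literally $A_{L/K} \subseteq L^\times N_{M/L}J_M$ by definition of $A_{L/K}$ — this is the same computation as (\ref{it:ideleab}) but now we do not simplify $A_{L/K}$ to $N_{L/K}^{-1}K^\times$, and indeed $A_{L/K} \subsetneq N_{L/K}^{-1}K^\times$ in general, the gap being measured by $\widehat H^{-1}(\Gal(L/K), J_L)$.

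For (\ref{it:ideleloc}) and (\ref{it:idelef}): fix a place $\P$ of $L$ over $\p$ of $K$. The decomposition group $Z_\P(M/K)$ fits in $1 \to Z_\P(M/L) \to Z_\P(M/K) \to Z_\P(L/K) \to 1$ where $Z_\P(M/L) = \Gal(M_\P/L_\P)$ is abelian (it is local), so $Z_\P(M/K)$ is abelian iff $Z_\P(L/K) = \Gal(L_\P/K_\p)$ acts trivially on $\Gal(M_\P/L_\P)$. Combining with centrality (which forces all of $\Gal(L/K)$ to act trivially, already handled), the extra local condition at $\P$ is that $\Gal(L_\P/K_\p)$ acts trivially on $\Gal(M_\P/L_\P)$; transporting through the \emph{local} Artin isomorphism $L_\P^\times/N_{M_\P/L_\P}M_\P^\times \simeq \Gal(M_\P/L_\P)$ this becomes: the subgroup of $L_\P^\times$ generated by $\sigma x / x$ for $x \in L_\P^\times$, $\sigma \in \Gal(L_\P/K_\p)$, maps to $1$. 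That subgroup is precisely $H_{L/K,\P} = H_{L/K}\cap L_\P^\times = \ker(N_{L_\P/K_\p}\colon L_\P^\times \to K_\p^\times)$ by Hilbert 90 for the local cyclic/abelian extension $L_\P/K_\p$ (here one uses that $L_\P/K_\p$ need not be cyclic in general, but $\widehat H^{-1}$ of a module under an abelian group is still computed by the augmentation, and for $L_\P^\times$ with the class-field-theory identification one gets exactly the norm-one subgroup). Hence (\ref{it:ideleloc}). Finally (\ref{it:idelef}) is (\ref{it:ideleloc}) for every $\P$ simultaneously together with centrality: $H_{L/K}$ is generated by $\bigcup_\P H_{L/K,\P}$ together with $A_{L/K}$ — more carefully, $H_{L/K} = A_{L/K}\cdot\langle H_{L/K,\P} : \P\rangle$ — so requiring all of $H_{L/K}$ to be in the norm group is equivalent to requiring centrality plus local abelianness at every $\P$.

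**Main obstacle.** The real content is the identification of the various generated subgroups with kernels of norm maps: showing $\langle \sigma\mathbf a/\mathbf a\rangle L^\times = N_{L/K}^{-1}K^\times$ in (\ref{it:ideleab}), that $A_{L/K}$ is \emph{not} all of $N_{L/K}^{-1}K^\times$ in (\ref{it:idelecent}) (so that the two statements are genuinely different), and that $\langle \sigma x/x : x\in L_\P^\times\rangle = \ker N_{L_\P/K_\p}$ in (\ref{it:ideleloc}). These are applications of Hilbert 90 / the vanishing of $\widehat H^{-1}$ of the relevant multiplicative groups, but bookkeeping the augmentation ideal for a not-necessarily-cyclic abelian group $\Gal(L/K)$ and keeping track of exactly where $L^\times$ (global) versus $L_\P^\times$ (local) enters is the delicate part; also, assembling local conditions into the single global statement in (\ref{it:idelef}) requires care that $H_{L/K}$ is generated by its local pieces modulo $A_{L/K}$, which uses a weak approximation / density argument in $J_L$.
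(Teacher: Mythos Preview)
Your treatment of (\ref{it:ideleur}), (\ref{it:idelecent}) and the assembly argument for (\ref{it:idelef}) matches the paper. The real problems are in (\ref{it:ideleab}) and (\ref{it:ideleloc}), and they stem from the same false group-theoretic claim.

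You write that since $\Gal(M/L)$ is abelian, ``$M/K$ is abelian iff $\Gal(L/K)$ acts trivially on $\Gal(M/L)$.'' Only the forward implication holds: a central extension of an abelian group by an abelian group need not be abelian (take $\Gal(M/K)=D_8$ with $\Gal(M/L)$ the center). So your characterization of (\ref{it:ideleab}) is actually the characterization of (\ref{it:idelecent}), and the two statements collapse. You then try to separate them again by asserting $A_{L/K}\,L^\times = N_{L/K}^{-1}K^\times$ ``by Hilbert~90 for id\`eles,'' but Hilbert~90 gives $H^1(\Gal(L/K),J_L)=0$, not $\hat H^{-1}=0$; the quotient $N_{L/K}^{-1}K^\times/A_{L/K}L^\times$ is $\hat H^{-1}(\Gal(L/K),C_L)$, which is generally nontrivial (the paper itself uses its finiteness, not its vanishing, in the proof of Theorem~\ref{thm:cent}). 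Your own remark in (\ref{it:idelecent}) that $A_{L/K}\subsetneq N_{L/K}^{-1}K^\times$ already contradicts the equality you invoked for (\ref{it:ideleab}). The paper does not reprove (\ref{it:ideleab}) but cites \cite[Lemma~2.2]{MR720859}; a correct argument must use more than the conjugation action on $\Gal(M/L)$.

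The same error recurs in (\ref{it:ideleloc}): you claim $Z_\P(M/K)$ is abelian iff $\Gal(L_\P/K_\p)$ acts trivially on $\Gal(M_\P/L_\P)$, and then identify $\langle\sigma x/x\rangle$ with $\ker N_{L_\P/K_\p}$ ``by Hilbert~90.'' Both steps fail when $\Gal(L_\P/K_\p)$ is non-cyclic abelian (which does occur, e.g.\ biquadratic local extensions): centrality does not give abelianness, and $\hat H^{-1}(\Gal(L_\P/K_\p),L_\P^\times)\cong H_2(\Gal(L_\P/K_\p),\Z)$ need not vanish. The two errors happen to cancel when $L_\P/K_\p$ is cyclic, which may be why the argument looked plausible. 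The paper's proof of (\ref{it:ideleloc}) avoids this entirely: for the nontrivial direction it introduces the maximal abelian subextension $M_{\widetilde\P}'$ of $M_{\widetilde\P}/K_\p$, takes $\sigma\in\Gal(M_{\widetilde\P}/M_{\widetilde\P}')$, writes $\sigma=(\alpha,M_{\widetilde\P}/L_\P)$, and uses that $N_{L_\P/K_\p}\alpha$ is then a local norm from $M_{\widetilde\P}'$ (hence from $M_{\widetilde\P}$) to manufacture an element of $H_{L/K,\P}$ whose Artin symbol is $\sigma$, forcing $\sigma=1$.
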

\begin{proof}
  (\ref{it:ideleur}) is well-known.
  (\ref{it:ideleab}) is from \cite[Lemma~2.2]{MR720859}.

  \begin{enumerate}
    \item[(\ref{it:idelecent})] In the isomorphism $J_L\big/L^\times N_{M/L}J_M\simeq\Gal(M/L)$,
    the usual action of $\Gal(L/K)$ on the left-hand side corresponds
    the conjugacy action on $\Gal(M/L)$.
    Hence, the action on $J_L\big/L^\times N_{M/L}J_M$ is trivial
    if and only if $M$ is central over $L/K$.
    \item[(\ref{it:ideleloc})] Let $\p$ be the place of $K$ below $\P$,
    and $\widetilde{\P}$ be a place of $M$ above $\P$.

    Suppose that the decomposition group of $M/K$ at $\widetilde{\P}$,
    which is isomorphic to $\Gal(M_{\widetilde{\P}}/K_\p)$, is abelian.
    Let $\mathbf{a}\in H_{L/K,\P}$.
    The $\P$-component $\mathbf{a}_\P$ is in the kernel of
    $N_{L_\P/K_\p}\colon L_\P^\times\to K_\p^\times$,
    and the other components are $1$.
    Thus
    \[
      (\mathbf{a}_\P,M_{\widetilde{\P}}/L_\P)=(N_{L_\P/K_\p}\mathbf{a}_\P,M_{\widetilde{\P}}/K_\p)=1,
    \]
    it means $\art{M/L}{\mathbf{a}}=1$.
    Hence $\mathbf{a}\in L^\times N_{M/L}J_M$.

    Conversely, suppose $H_{L/K,\P}\subseteq L^\times N_{M/L}J_M$.
    Let $M_{\widetilde{\P}}'$ be the maximal abelian subextension of $M_{\widetilde{\P}}/K_\p$,
    and take arbitrary $\sigma\in\Gal(M_{\widetilde{\P}}/M_{\widetilde{\P}}')$.
    Then there is $\alpha\in L_\P^\times$ such that $\sigma=(\alpha,M_{\widetilde{\P}}/L_\P)$.
    We have
    \[
      (N_{L_\P/K_\p}\alpha,M_{\widetilde{\P}}'/K_\p)=\sigma|_{M_{\widetilde{\P}}'}=1.
    \]
    Therefore $N_{L_\P/K_\p}\alpha\in K_\p^\times$ is a norm from $M_{\widetilde{\P}}'^\times$,
    hence from $M_{\widetilde{\P}}^\times$.
    We write $N_{L_\P/K_\p}\alpha=N_{M_{\widetilde{\P}}/K_\p}\beta$ with $\beta\in M_{\widetilde{\P}}^\times$.
    Define an id\`ele $\mathbf{a}\in J_L$ as $\mathbf{a}_\P=\alpha N_{M_{\widetilde{\P}}/L_\P}\beta^{-1}$
    and $\mathbf{a}_{\P'}=1$ for $\P'\ne\P$.
    Then we have $\mathbf{a}\in H_{L/K,\P}$, hence $\mathbf{a}\in L^\times N_{M/L}J_M$.
    Therefore $\art{M/L}{\mathbf{a}}=1$, i.e.,
    \[
      \sigma=(\alpha N_{M_{\widetilde{\P}}/L_\P}\beta^{-1},M_{\widetilde{\P}}/L_\P)=1.
    \]
    Hence $M_{\widetilde{\P}}=M_{\widetilde{\P}}'$.
    \item[(\ref{it:idelef})] One can show that
    $H_{L/K}$ is topologically generated by $A_{L/K}$ and
    $H_{L/K,\P}$ for all $\P$.
    Hence from (\ref{it:idelecent}) and (\ref{it:ideleloc}) we have the result.
  \end{enumerate}
\end{proof}
\begin{proof}[Proof of Theorem~\ref{thm:cent}]
  Write $\Gal(L/K)=\{\sigma_1,\sigma_2,\dots,\sigma_n\}$ then
  the map
  \[
    J_L^n\longrightarrow A_{L/K};\;(\mathbf{a}_1,\mathbf{a}_2,\dots,\mathbf{a}_n)\longmapsto
    \frac{\sigma_1\mathbf{a}_1}{\mathbf{a}_1}\frac{\sigma_2\mathbf{a}_2}{\mathbf{a}_2}\dotsm\frac{\sigma_n\mathbf{a}_n}{\mathbf{a}_n}
  \]
  is continuous and surjective.
  Let $C_L=J_L/L^\times$ be the id\`ele class group of $L$.
  Let $D_L\subseteq C_L$ be the connected component of the identity element of $C_L$ and
  $\widetilde{D}_L\subseteq J_L$ be the inverse image of $D_L$.
  The above map induces a continuous surjection
  \[
    (C_L/D_L)^n\longrightarrow A_{L/K}\widetilde{D}_L/\widetilde{D}_L.
  \]
  Since $C_L/D_L$ is compact, $A_{L/K}\widetilde{D}_L/\widetilde{D}_L$ is also compact,
  hence closed in $J_L/\widetilde{D}_L\simeq C_L/D_L$.
  Therefore $A_{L/K}\widetilde{D}_L$ is closed in $J_L$.

  It is well-known that the $(-1)$-st Tate cohomology group
  \[
    \hat{H}^{-1}(\Gal(L/K),C_L)\simeq N^{-1}K^\times\Big/L^\times A_{L/K}
  \]
  is finite.
  Hence $N^{-1}K^\times\widetilde{D}_L$ and $H_{L/K}\widetilde{D}_L$,
  which have the subgroup $A_{L/K}\widetilde{D}_L$ of finite indices,
  are closed.
  Therefore from the above lemma,
  $N^{-1}K^\times\widetilde{D}_L$ corresponds to $K^\ab$ and
  $H_{L/K}\widetilde{D}_L$ corresponds to $F$ via the Artin map on $J_L$.

  We have proved that the lower map in the following diagram is an isomorphism:
  \[
    \begin{tikzpicture}[auto,scale=2]
      \node(NiK)at(0,1){$N^{-1}K^\times\Big/L^\times H_{L/K}$};
      \node(NiKD)at(0,0){$N^{-1}K^\times\widetilde{D}_L\Big/H_{L/K}\widetilde{D}_L$};
      \node(KNJ)at(2.5,1){$K^\times\cap NJ_L\Big/NL^\times$};
      \node(Gal)at(2.5,0){$\Gal(F/K^\ab)$.};
      \draw[->](NiK)--node{$N$}(KNJ);
      \draw[->](NiKD)--node{$\art{F/L}{\cdot}$}(Gal);
      \draw[->](NiK)--(NiKD);
      \draw[->](KNJ)--node{$\pi_{F/K}$}(Gal);
    \end{tikzpicture}
  \]
  The upper map is obviously an isomorphism.

  We can obtain $N^{-1}K^\times\cap\widetilde{D}_L\subseteq L^\times A_{L/K}$
  from $\hat{H}^{-1}(\Gal(L/K),D_L)=1$,
  hence we have
  \[
    N^{-1}K^\times\cap H_{L/K}\widetilde{D}_L=(N^{-1}K^\times\cap\widetilde{D}_L)H_{L/K}\subseteq L^\times H_{L/K}.
  \]
  This shows that the left vertical map in the above diagram is injective,
  hence an isomorphism.

  Take $\mathbf{a}\in N^{-1}K^\times$ and let $a=N\mathbf{a}\in K^\times$.
  We may assume that for each place $\p$ of $K$,
  there is a single place $\P_\p$ of $L$ above $\p$ such that
  $N_{L_{\P_\p}/K_\p}\mathbf{a}_{\P_\p}=a$ and $\mathbf{a}_\P=1$ for all $\P\ne\P_\p$ above $\p$.
  Then if $\widetilde{\P}$ is a place of $F$ above $\P_\p$,
  \[
    (a,F_{\widetilde{\P}}/K_\p)=(\mathbf{a}_{\P_\p},F_{\widetilde{\P}}/L_{\P_\p})
  \]
  is in the center of $\Gal(F/K)$, hence $F\subseteq K_a$.
  Also we have
  \[
    \pi_{F/L}(a)=\prod_\p(\mathbf{a}_{\P_\p},F_{\widetilde{\P}}/L_{\P_\p})=\art{F/L}{\mathbf{a}},
  \]
  which shows that the right vertical map in the above diagram is well-defined
  and the diagram is commutative.

  On the lower map of the diagram, $ZK^\ab$ corresponds to
  \[
    L^\times A_{L/K}U_L\cap N^{-1}K^\times\widetilde{D}_L.
  \]
  Since $\widetilde{D}_L\subseteq L^\times U_L$, it equals
  \[
    (U_L\cap N^{-1}K^\times)A_{L/K}\widetilde{D}_L=N^{-1}E_K\widetilde{D}_L.
  \]
  Therefore we obtain the following commutative diagram of isomorphisms:
  \[
    \begin{tikzpicture}[auto,scale=2]
      \node(NiK)at(0,1){$L^\times N^{-1}E_K\Big/L^\times H_{L/K}$};
      \node(NiKD)at(0,0){$N^{-1}E_K\widetilde{D}_L\Big/H_{L/K}\widetilde{D}_L$};
      \node(KNJ)at(2.5,1){$(E_K\cap NU_L)NL^\times\Big/NL^\times$};
      \node(Gal)at(2.5,0){$\Gal(F/ZK^\ab)$.};
      \draw[->](NiK)--node{$N$}(KNJ);
      \draw[->](NiKD)--node{$\art{F/L}{\cdot}$}(Gal);
      \draw[->](NiK)--(NiKD);
      \draw[->](KNJ)--node{$\pi_{F/K}$}(Gal);
    \end{tikzpicture}
  \]
\end{proof}

\begin{corollary}
  Let $l$ be a prime.
  In the same situation as Theorem~\ref{thm:cent},
  assume that $L/K$ is abelian $l$-extension.
  Let $F^{(l)}$ be the maximal elementary abelian $l$-subextension of $F/K^\ab$.

  Then the class number of $L$ is prime to $l$ if and only if,
  the genus number of $L$ is prime to $l$ and
  the image of $E_K\cap NU_L$ by $\pi_{F^{(l)}/K}$ equals $\Gal(F^{(l)}/K^\ab)$.
\end{corollary}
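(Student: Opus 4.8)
The plan is to reduce the two conditions on the right-hand side to the single inequality $l\nmid[Z:L]$, and then to prove $l\nmid h_L\iff l\nmid[Z:L]$ directly. First I would clear away the bookkeeping behind the reduction. Since $Z/L$ and $G/L$ are unramified while $G/K$ is abelian, we have $G\subseteq Z$ and $G=Z\cap K^\ab$, so $\Gal(ZK^\ab/K^\ab)\simeq\Gal(Z/G)$, and the genus number is $[G:L]$ by definition. Because $F/K$ is central over $L/K$, the group $\Gal(F/K^\ab)=[\Gal(F/K),\Gal(F/K)]$ is central, hence abelian; thus $\Gal(F^{(l)}/K^\ab)$ is its reduction modulo $l$, and for $a\in E_K\cap NU_L$ the element $\pi_{F^{(l)}/K}(a)$ is the image of $\pi_{F/K}(a)$ under $\Gal(F/K^\ab)\twoheadrightarrow\Gal(F/K^\ab)/\Gal(F/K^\ab)^l=\Gal(F^{(l)}/K^\ab)$. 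By Theorem~\ref{thm:cent}, $\pi_{F/K}$ maps $E_K\cap NU_L$ onto $\Gal(F/ZK^\ab)$, and $[\Gal(F/K^\ab):\Gal(F/ZK^\ab)]=[Z:G]$ (using $Z\cap K^\ab=G$). Now for a finite abelian group $A$ with a subgroup $B$, the composite $B\hookrightarrow A\twoheadrightarrow A/lA$ is surjective iff $A=B+lA$ iff $l\nmid[A:B]$; applying this with $A=\Gal(F/K^\ab)$ and $B=\Gal(F/ZK^\ab)$ shows that the condition $\pi_{F^{(l)}/K}(E_K\cap NU_L)=\Gal(F^{(l)}/K^\ab)$ is equivalent to $l\nmid[Z:G]$. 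Combined with the genus-number condition $l\nmid[G:L]$, the right-hand side of the corollary is exactly $l\nmid[Z:L]$.

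It then remains to show $l\nmid h_L\iff l\nmid[Z:L]$. The implication ``$\Rightarrow$'' is immediate: $Z/L$ is unramified abelian, so $\Gal(Z/L)$ is a quotient of the ideal class group of $L$ and $[Z:L]\mid h_L$. For ``$\Leftarrow$'' I would argue contrapositively. Assume $l\mid h_L$, let $H/L$ be the maximal unramified abelian $l$-extension (so $H/K$ is Galois), and put $\mathcal C:=\Gal(H/L)\neq1$, the $l$-Sylow subgroup of the class group, on which $\Delta:=\Gal(L/K)$ acts by conjugation. The augmentation submodule $I_\Delta\mathcal C$ is $\Delta$-stable, hence normal in $\Gal(H/K)$; its fixed field $Z'$ then has $Z'/K$ Galois, $Z'/L$ unramified abelian, and $\Gal(Z'/L)=\mathcal C_\Delta$ with trivial $\Delta$-action, so $Z'$ is a central extension of $L/K$ unramified over $L$, whence $Z'\subseteq Z$. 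Finally $\mathcal C_\Delta\neq1$: because $\Delta$ is an $l$-group, $\F_l[\Delta]$ is local with maximal ideal the augmentation ideal, so Nakayama's lemma makes the coinvariants of the nonzero $\F_l[\Delta]$-module $\mathcal C/l\mathcal C$ nonzero, and since $(\mathcal C/l\mathcal C)_\Delta=\mathcal C_\Delta/l\mathcal C_\Delta$ this forces $\mathcal C_\Delta\neq1$. Hence $L\subsetneq Z'\subseteq Z$, so $l\mid[Z:L]$.

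The hard part is this last step: inside an arbitrary nontrivial $l$-class field one must locate a nontrivial \emph{central} (over $L/K$) unramified $l$-extension, and this is exactly where the hypothesis that $L/K$ is an $l$-extension enters, through the fixed-point/Nakayama property of $l$-groups; without it the equivalence fails. The remainder is routine, the one point deserving care being that ``unramified'' is taken in the same archimedean-inclusive sense for $Z$, $G$ and $H$ throughout (in particular when $l=2$), so that $[Z:L]\mid h_L$ and $Z'\subseteq Z$ hold on the nose.
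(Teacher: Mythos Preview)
Your proof is correct and takes a somewhat different route from the paper's. You first collapse the two right-hand conditions into the single statement $l\nmid[Z:L]$ via the finite-abelian-group observation $B+lA=A\iff l\nmid[A:B]$ applied to $B=\Gal(F/ZK^{\ab})\subseteq A=\Gal(F/K^{\ab})$; the paper instead keeps the image condition in the form $F^{(l)}\cap ZK^{\ab}=K^{\ab}$. For the hard implication you pass to the full unramified abelian $l$-extension $H/L$ and use Nakayama on the $\F_l[\Delta]$-module $\mathcal C/l\mathcal C$ to produce a nontrivial central unramified $l$-piece $Z'\subseteq Z$. The paper works instead with the maximal unramified \emph{elementary} abelian $l$-extension $M/L$, identifies $M\cap Z$ with the fixed field of $[\Delta,\Sigma]$ in $\Sigma=\Gal(M/K)$, argues $M\cap Z\subseteq F^{(l)}\cap Z\subseteq G$ under the hypotheses, and concludes $\Delta=[\Delta,\Sigma]=[[\Delta,\Sigma],\Sigma]=\cdots=1$ from the nilpotency of the $l$-group $\Sigma$. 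The two arguments rest on the same $l$-group phenomenon (Nakayama for $\F_l[\Delta]$ versus nilpotency of $\Sigma$); your reduction to $l\nmid[Z:L]$ is a clean repackaging that sidesteps having to place $M\cap Z$ inside $F^{(l)}$, at the cost of invoking a slightly larger auxiliary field $H$.
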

\begin{proof}
  Since $\pi_{F^{(l)}/K}=\pi_{F/K}|_{F^{(l)}}$,
  Theorem~\ref{thm:cent} yields that
  the image of $E_K\cap NU_L$ by $\pi_{F^{(l)}/K}$ equals $\Gal(F^{(l)}/F^{(l)}\cap ZK^\ab)$.

  Let $G$ be the genus field of $L/K$.
  If the class number of $L$ is prime to $l$,
  so are $[G:L]$ and $[ZK^\ab:K^\ab]$, hence $F^{(l)}\cap ZK^\ab=K^\ab$.
  Conversely, assume $[G:L]$ is prime to $l$ and $F^{(l)}\cap ZK^\ab=K^\ab$.
  Let $M$ be the maximal unramified elementary abelian $l$-extension of $L$,
  and write $\Sigma=\Gal(M/K)$ and $\Delta=\Gal(M/L)$.
  Then the corresponding field $M\cap Z$ of $[\Delta,\Sigma]$ is
  contained in $F^{(l)}\cap Z$, hence in $K^\ab\cap Z=G$ by the assumptions.
  Thus $M\cap Z=L$ and we obtain $\Delta=[\Delta,\Sigma]=[[\Delta,\Sigma],\Sigma]=\dotsb$,
  which yields $\Delta=1$ from the nilpotency of $\Sigma$.
  Hence the class number of $L$ is prime to $l$.
\end{proof}

We give some examples of the above corollary.
The next example is one of the known results on the parity problems.
\begin{example}
  Let $K=\Q$ and
  let $p,q,r\not\equiv3\ (4)$ be distinct primes with $(p/q)=(q/r)=(r/p)=-1$.

  Then the class number of $L=\Q(\sqrt p,\sqrt q,\sqrt r)$ is even if and only if
  \[
    \art{p}{q}_4\art{q}{r}_4\art{r}{p}_4=-1.
  \]
\end{example}
\begin{proof}
  From the genus formula \cite{nmj/1118802021},
  the genus number of $L/\Q$ equals $1$.

  Let $\sigma_p\ (p\text{ : prime}),\sigma_{-1},\sigma_\infty$ be those in the previous section.
  For a prime $l\ne p,q,r$ we have
  $\sigma_l|_{F^{(2)}}\in\Gal(F^{(2)}/L)$,
  since $l$ does not ramify in $L/\Q$.
  Similarly $\sigma_{-1}|_{F^{(2)}},\sigma_\infty|_{F^{(2)}}$ are also in the center of $\Gal(F^{(2)}/\Q)$,
  so we may ignore anything but $\sigma_p,\sigma_q,\sigma_r$.

  From Theorem~\ref{thm:graph} we have
  \[
    \{\{p,q\},\{p,r\}\},\{\{q,p\},\{q,r\}\},\{\{r,p\},\{r,q\}\}\in\mathcal{A}_{F^{(2)}/\Q}(-1).
  \]
  On the other hand, from Proposition~\ref{prop:d8} and \ref{prop:trip} with $a=-1,\,\mu=pq,\,\nu=pr$ we have
  \[
    \{\{p,q\},\{q,r\},\{r,p\}\}\in\mathcal{I}_{F^{(2)}/\Q}(-1).
  \]
  Hence $\mathcal{I}_{F^{(2)}/\Q}(-1)$ is a group of order two generated by $\{\{p,q\},\{q,r\},\{r,p\}\}$.
  It means that $\pi_{F^{(2)}/\Q}(-1)=1$ if and only if $c_{p,q}+c_{q,r}+c_{r,p}=0$
  where $c_{p,q}$ are those in the equality~\eqref{eq:qcpq}.
  Therefore we have the result from Theorem~\ref{thm:qmain}.
\end{proof}

The next example is an analogue of the previous example
with the base field as a quadratic field.
\begin{example}
  Let $K=\Q(\sqrt2)$.
  Put $\pi_2=2+\sqrt2,\,\pi_{41}=7+2\sqrt2$ and
  let $\pi\ne\pi_2,\pi_{41}$ be a prime element of $K$ such that $K(\sqrt\pi)/K$ is unramified outside $(\pi)$
  (Note that $\pi_2$ and $\pi_{41}$ also satisfy this property).
  Also assume
  \[
    \art{\pi_2,\pi}{(\pi)}=\art{\pi_{41},\pi}{(\pi)}=-1.
  \]
  Put
  \begin{align}
    \xi&=1+2\sqrt2+\sqrt{2\pi_2\pi_{41}}\in K(\sqrt{\pi_2\pi_{41}}),\\
    \eta&=3+2\sqrt2+\sqrt{\pi_2\pi_{41}}\in K(\sqrt{\pi_2\pi_{41}}).
  \end{align}

  Then the class number of $L=K(\sqrt{\pi_2},\sqrt{\pi_{41}},\sqrt\pi)$ is even if and only if
  \[
    \art{\xi,\pi}{\P}=-1\text{ and }\art{\eta,\pi}{\P}=1
  \]
  where $\P$ is a place of $K(\sqrt{\pi_2\pi_{41}})$ above $(\pi)$.
\end{example}
\begin{proof}
  The genus number of $L/K$ equals $1$.

  Let $\varepsilon=-1$ or $1+\sqrt2$.
  Define $\sigma_\p\ (\p\text{ : place of }K),\sigma_{-1},\sigma_{1+\sqrt2}\in\Gal(K_\varepsilon^{(2)}/K)$
  similarly as the case of $K=\Q$, i.e.,
  \begin{itemize}
    \item for $\p\ne(\pi_2)$, $\sigma_\p$ is a lift of $\art{g_\p,K^\ab/K}{\p}$
    where $g_\p$ is a local unit at $\p$ which is not square;
    \item $\sigma_{(\pi_2)},\sigma_{-1},\sigma_{1+\sqrt2}$ are lifts of
    \[
      \art{5,K^\ab/K}{(\pi_2)},\art{-1,K^\ab/K}{(\pi_2)},\art{1+\sqrt2,K^\ab/K}{(\pi_2)}
    \]
    respectively.
  \end{itemize}
  Note that the local unit group at $(\pi_2)$ is (topologically) generated by
  $-1,1+\sqrt2$ and $5$.
  From the property of $\pi$ we have
  \[
    \art{-1,L/K}{\p}=\art{1+\sqrt2,L/K}{\p}=1
  \]
  for any place $\p$ of $K$.
  Hence, similarly as the previous example,
  we may ignore anything but $\sigma_{(\pi_2)},\sigma_{(\pi_{41})},\sigma_{(\pi)}$.

  We can determine $\mathcal{I}_{F^{(2)}/K}(\varepsilon)$
  without determining $\mathcal{I}_{K_\varepsilon^{(2)}/K}(\varepsilon)$.
  A similar argument as in the proof of Theorem~\ref{thm:graph} yields
  \begin{align}
    \{\{(\pi_2),(\pi_{41})\},\{(\pi_2),(\pi)\}\}&,\\
    \{\{(\pi_{41}),(\pi_2)\},\{(\pi_{41}),(\pi)\}\}&,\\
    \{\{(\pi),(\pi_2)\},\{(\pi),(\pi_{41})\}\}&\in\mathcal{A}_{F^{(2)}/K}(\varepsilon).
  \end{align}
  On the other hand, from Proposition~\ref{prop:d8} and \ref{prop:trip}
  with $a=\varepsilon,\,\mu=\pi_2\pi_{41},\,\nu=\pi_2\pi$ we have
  \[
    \{\{(\pi_2),(\pi_{41})\},\{(\pi_{41}),(\pi)\},\{(\pi),(\pi_2)\}\}\in\mathcal{I}_{F^{(2)}/K}(\varepsilon).
  \]
  Hence we have
  \[
    \mathcal{I}_{F^{(2)}/K}(-1)=\mathcal{I}_{F^{(2)}/K}(1+\sqrt2),
  \]
  which is a group of order two generated by $\{\{(\pi_2),(\pi_{41})\},\{(\pi_{41}),(\pi)\},\{(\pi),(\pi_2)\}\}$.

  Put
  \begin{itemize}
    \item $b=3(1+\sqrt2),\,\alpha=\xi/b$ for $\varepsilon=-1$;
    \item $b=1-\sqrt2,\,\alpha=\eta/b$ for $\varepsilon=1+\sqrt2$;
  \end{itemize}
  then one can check that these satisfy the assumptions of Proposition~\ref{prop:main}.
  Hence from Proposition~\ref{prop:main}, $c_{(\pi_2),(\pi_{41})}+c_{(\pi_{41}),(\pi)}+c_{(\pi),(\pi_2)}=0$
  if and only if
  \[
    \art{b\alpha,\pi_2\pi}{\P}=\art{b,\pi_2\pi}{(\pi_2)},
  \]
  from the assumptions on $b$ and $\alpha$ it means
  \[
    \art{b\alpha,\pi}{\P}=\art{b,\pi_2}{(\pi_2)}.
  \]
  Since $\art{3(1+\sqrt2),\pi_2}{(\pi_2)}=-1$ and $\art{1-\sqrt2,\pi_2}{(\pi_2)}=1$
  we have the result.
\end{proof}

\section{Application on fundamental units of real quadratic fields}

Using Proposition~\ref{prop:main} several times, we can obtain some relations modulo square
on fundamental units of real quadratic fields.

\begin{theorem}
  \label{thm:sq}
  Let $m_1,\dots,m_k$ be square-free positive integers and
  let $p_1,\dots,p_t$ be all the prime divisors of $m_1\dotsm m_k$.
  Assume $N\varepsilon_{m_i}=-1$ for each $i$,
  and also assume that $m_1\dotsm m_k$ is a perfect square.

  Then $\varepsilon_{m_1}\dotsm\varepsilon_{m_k}$ is square in $\Q(\sqrt{p_1},\dots,\sqrt{p_t})$.
\end{theorem}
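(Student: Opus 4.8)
The plan is to prove the statement directly by Kummer theory, showing $F(\sqrt{\varepsilon})=F$ where $F=\Q(\sqrt{p_1},\dots,\sqrt{p_t})$ and $\varepsilon=\varepsilon_{m_1}\dotsm\varepsilon_{m_k}$. Note $\Q(\sqrt{m_i})\subseteq F$ since the prime divisors of $m_i$ are among the $p_j$, so $\varepsilon\in F^{\times}$. The only properties of the units I would use are $N\varepsilon_{m_i}=-1$ — so that the conjugate of $\varepsilon_{m_i}$ in $\Q(\sqrt{m_i})$ is $-\varepsilon_{m_i}^{-1}$ — and the classical consequence that no prime $\equiv 3\pmod 4$ divides any $m_i$.

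First I would unpack the hypothesis that $m_1\dotsm m_k$ is a perfect square: since the $m_i$ are squarefree, each $p_j$ divides an even number of the $m_i$, so if $\chi_{m}$ denotes the quadratic character of $\Gal(F/\Q)$ cutting out $\Q(\sqrt{m})$ then $\prod_i\chi_{m_i}=\prod_j\chi_{p_j}^{\#\{i:p_j\mid m_i\}}=1$. Writing $S_g=\{i:g|_{\Q(\sqrt{m_i})}\ne\mathrm{id}\}$ for $g\in\Gal(F/\Q)$, this says $|S_g|=\#\{i:\chi_{m_i}(g)=-1\}$ is even. Since $g(\varepsilon_{m_i})=\varepsilon_{m_i}$ for $i\notin S_g$ and $g(\varepsilon_{m_i})=-\varepsilon_{m_i}^{-1}$ for $i\in S_g$, we get $g(\varepsilon)/\varepsilon=(-1)^{|S_g|}\bigl(\prod_{i\in S_g}\varepsilon_{m_i}^{-1}\bigr)^{2}=\bigl(\prod_{i\in S_g}\varepsilon_{m_i}^{-1}\bigr)^{2}\in F^{\times 2}$. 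In particular, taking the embedding $\sigma_0$ of $F$ with $\sigma_0(\sqrt{p_j})>0$ for all $j$, under which each $\sigma_0(\varepsilon_{m_i})>0$, gives $\sigma_0(\varepsilon)>0$, and then $\sigma(\varepsilon)=\sigma_0(f_g)^{2}\sigma_0(\varepsilon)>0$ for every $\sigma=\sigma_0\circ g$; that is, $\varepsilon$ is totally positive, where $f_g:=\prod_{i\in S_g}\varepsilon_{m_i}^{-1}\in F^{\times}$.

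Next I would determine $\Gal(F(\sqrt\varepsilon)/\Q)$. From $g(\varepsilon)=f_g^{2}\varepsilon$, any lift $\tilde g$ of $g$ to $\Gal(\overline{\Q}/\Q)$ sends $\sqrt\varepsilon$ to $\pm f_g\sqrt\varepsilon\in F(\sqrt\varepsilon)$, so $F(\sqrt\varepsilon)/\Q$ is Galois. Using $S_{gh}=S_g\triangle S_h$ and $(-\varepsilon_{m_i}^{-1})^{-1}=-\varepsilon_{m_i}$ one obtains the factor-system identity $g(f_h)f_g=(-1)^{|S_g\cap S_h|}f_{gh}$; since $(g,h)\mapsto|S_g\cap S_h|\bmod 2$ is symmetric, the (central) extension $1\to\Gal(F(\sqrt\varepsilon)/F)\to\Gal(F(\sqrt\varepsilon)/\Q)\to\Gal(F/\Q)\to1$ is abelian, and putting $h=g$ with $|S_g|$ even gives $\tilde g^{2}=1$, so $\Gal(F(\sqrt\varepsilon)/\Q)$ has exponent $2$. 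Hence $F(\sqrt\varepsilon)$ is a multiquadratic field containing $F$, so either $F(\sqrt\varepsilon)=F$ (and we are done) or $F(\sqrt\varepsilon)=F(\sqrt d)$ with $d$ a squarefree rational integer, $\sqrt d\notin F$, and $\varepsilon d\in F^{\times 2}$.

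Finally I would rule out the second alternative. As $\varepsilon$ is a unit, $F(\sqrt\varepsilon)/F$ is unramified away from $2$, so comparing with the ramification of $\Q(\sqrt d)/\Q$ forces every odd prime dividing $d$ to lie in $\{p_1,\dots,p_t\}$, and $\varepsilon d\in F^{\times2}$ with $\varepsilon$ totally positive forces $d>0$; thus $d=2^{\delta}\prod_{j\in T}p_j$ with $\delta\in\{0,1\}$ and $T\subseteq\{1,\dots,t\}$. If $\delta=0$ then $\sqrt d\in F$, a contradiction, so $\delta=1$ and $2\notin\{p_1,\dots,p_t\}$; then every $m_i$ is odd, and since $N\varepsilon_{m_i}=-1$ every prime divisor of every $m_i$ — hence every $p_j$ — is $\equiv1\pmod4$, so $2$ is unramified in $F$ and $v_{\P}(d)=v_{\P}(2)=1$ at any place $\P\mid 2$ of $F$, contradicting $v_{\P}(\varepsilon d)\in 2\Z$. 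Therefore $F(\sqrt\varepsilon)=F$, i.e.\ $\varepsilon$ is a square in $F$. The step I expect to need the most care is this last one, as it is the only place where the hypothesis $N\varepsilon_{m_i}=-1$ re-enters (to exclude primes $\equiv3\pmod4$ and thereby pin down the behaviour of $2$); the rest is the cocycle bookkeeping of the previous paragraph, routine once $(-1)^{|S_g|}=1$ and $S_{gh}=S_g\triangle S_h$ are noted.
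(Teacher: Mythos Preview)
Your argument is correct and is a genuinely different, more elementary proof than the paper's. The paper proceeds by choosing auxiliary primes $l$ that split completely in $E(\sqrt{-1})$, applying Propositions~\ref{prop:d8}, \ref{prop:trip}, \ref{prop:main} with $\mu=m_i,\ \nu=l,\ \alpha=\varepsilon_{m_i}$ to obtain $\sum_{p\mid m_i}c_{p,l}=0\iff(\varepsilon_{m_i}/l)=1$, and then summing over $i$ and using that $m_1\dotsm m_k$ is a square to conclude $(\varepsilon_{m_1}\dotsm\varepsilon_{m_k}/l)=1$; Chebotarev then forces $E(\sqrt{-1},\sqrt{\varepsilon})=E(\sqrt{-1})$, and positivity drops the $\sqrt{-1}$. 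Your route bypasses the Artin-symbol machinery and Chebotarev entirely: you read the square hypothesis as $\prod_i\chi_{m_i}=1$, so $|S_g|$ is even and $g(\varepsilon)/\varepsilon\in F^{\times2}$; the explicit cocycle $(g,h)\mapsto(-1)^{|S_g\cap S_h|}$ is symmetric and gives $\tilde g^2=1$, so $F(\sqrt\varepsilon)/\Q$ is multiquadratic; then ramification and total positivity force $d\mid 2p_1\dotsm p_t$, and the residual case $2\mid d$, $2\nmid p_1\dotsm p_t$ is killed by the $p_j\equiv1\ (4)$ consequence of $N\varepsilon_{m_i}=-1$. What the paper's approach buys is uniformity: the same computation of $\sum c_{p,l}$ feeds directly into the sharper Theorem~\ref{thm:candm}, which identifies \emph{which} square root one gets, and illustrates that Theorem~\ref{thm:sq} is really the degenerate ``$P=\emptyset$'' case of that framework. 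What your approach buys is self-containment: it needs nothing beyond basic Kummer theory and the classical fact that $N\varepsilon_m=-1$ excludes primes $\equiv3\ (4)$, and in particular it does not invoke the dihedral embedding problem or any density argument.
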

\begin{proof}
  Let $E=\Q(\sqrt{p_1},\dots,\sqrt{p_t})$.
  Let $l$ be a prime which splits totally in $E(\sqrt{-1})$,
  equivalently, satisfies $l\equiv1\ (4)$ and $(p_j/l)=1$ for all $j$.
  We can apply Proposition~\ref{prop:d8}, \ref{prop:trip}, \ref{prop:main}
  with $K=\Q,\,\mu=m_i,\,\nu=l,\,\alpha=\varepsilon_{m_i},\,b=1$,
  since $p_j\not\equiv3\ (4)$ for all $j$ from our assumption.
  We have
  \[
    \sum_{p|m_i}c_{p,l}=0\iff\art{\varepsilon_{m_i}}{l}=1.
  \]
  Also we have
  \[
    \sum_{i=1}^k\sum_{p|m_i}c_{p,l}=0
  \]
  since $m_1\dotsm m_k$ is square.
  Therefore we obtain
  \[
    \art{\varepsilon_{m_1}\dotsm\varepsilon_{m_k}}{l}=1
  \]
  hence $l$ splits totally in $E(\sqrt{-1},\sqrt{\varepsilon_{m_1}\dotsm\varepsilon_{m_k}})$,
  which is Galois over $\Q$.
  From Chebotarev density theorem we have
  \[
    E(\sqrt{-1},\sqrt{\varepsilon_{m_1}\dotsm\varepsilon_{m_k}})=E(\sqrt{-1}),
  \]
  which yields $\sqrt{\varepsilon_{m_1}\dotsm\varepsilon_{m_k}}\in E$
  since $\varepsilon_{m_1}\dotsm\varepsilon_{m_k}>0$.
\end{proof}

We also have an analogous theorem of Theorem~\ref{thm:sq} for fundamental units with positive norms:
\begin{theorem}
  Let $m$ be a square-free positive integer and
  let $p_1,\dots,p_t$ be all the prime divisors of $m$.
  Assume $N\varepsilon_m=1$.

  Then $\varepsilon_m$ is square in $E$, where
  \[
    E=\begin{cases}
      \Q(\sqrt{p_1},\dots,\sqrt{p_t})&(m\not\equiv3\pmod4)\\
      \Q(\sqrt2,\sqrt{p_1},\dots,\sqrt{p_t})&(m\equiv3\pmod4).
    \end{cases}
  \]
\end{theorem}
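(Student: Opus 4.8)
\emph{Proof proposal.}  The plan is to imitate the proof of Theorem~\ref{thm:sq}, with one simplification: since $N\varepsilon_m=1$ we may run the argument of Proposition~\ref{prop:main} with the base element equal to $1$, and then $\pi_{L/\Q}(1)=\prod_\p\art{1,L/\Q}{\p}=1$ holds automatically, so the equivalence furnished by Proposition~\ref{prop:main} becomes a free identity among Hilbert symbols. First I would fix an auxiliary prime $l$ that splits completely in $E(\sqrt{-1})$; concretely $l\equiv1\pmod4$ and $\art{p_j}{l}=1$ for all $j$, and automatically $l\equiv1\pmod8$ in the cases where $\sqrt2\in E$ (i.e.\ $m\equiv2$ or $3\pmod4$). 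Infinitely many such $l$ exist, with positive density, by Chebotarev. With this choice I would check that $K=\Q$, $a=1$, $\mu=m$, $\nu=l$ satisfy the hypotheses of Proposition~\ref{prop:d8}: the symbols $\art{1,m}{\p}$ and $\art{1,l}{\p}$ are trivial for free; $\art{m,l}{\p}=1$ at every place follows from $\art{p_j}{l}=1$ together with $l\equiv1\pmod4$ (which gives $\art{l}{p_j}=1$ by quadratic reciprocity, kills the symbol at $2$ and at $\infty$, and hence at $l$ by the product formula); and the condition that every $\p$ split in one of $\Q(\sqrt m),\Q(\sqrt l),\Q(\sqrt{ml})$ reduces to a short check at $\p\mid m$ (splits in $\Q(\sqrt l)$ since $\art{l}{\p}=1$), at $\p=l$ (splits in $\Q(\sqrt m)$ since $\art{m}{l}=1$), at $\p=\infty$ (all three fields are real), and at $\p=2$ (a brief case analysis on $m$ and $l$ modulo $8$). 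Hence there is a dihedral octic field $L\subseteq\Q_1^{(2)}$ containing $\Q(\sqrt m,\sqrt l)$ with $\Gal(L/\Q(\sqrt{ml}))$ cyclic.

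Next I would apply Proposition~\ref{prop:main} with $\alpha=\varepsilon_m$ (legitimate because $N_{\Q(\sqrt m)/\Q}(\varepsilon_m)=1=a$) and $b=1$; the side conditions on $b$ are vacuous since $1$ and $\varepsilon_m$ are units at every place. As $\pi_{L/\Q}(1)=1$, the proposition gives
\[
  \prod_{\p\in S_{\Q(\sqrt m)/\Q}\cap R_{\Q(\sqrt l)/\Q}}\art{\varepsilon_m,l}{\P}=\prod_{\p\in R_{\Q(\sqrt m)/\Q}\cap R_{\Q(\sqrt l)/\Q}}\art{1,l}{\p}=1.
\]
Since $l\equiv1\pmod4$ one has $R_{\Q(\sqrt l)/\Q}=\{l\}$, and $l$ splits in $\Q(\sqrt m)$, so the left-hand product collapses to the single symbol $\art{\varepsilon_m,l}{\P}=(\varepsilon_m/l)$ in the notation of the previous section. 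Thus $(\varepsilon_m/l)=1$ for every prime $l$ that splits completely in $E(\sqrt{-1})$.

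Finally, $(\varepsilon_m/l)=1$ together with $l$ split in $\Q(\sqrt m)$ says that $l$ splits completely in $\Q(\sqrt m,\sqrt{\varepsilon_m})=\Q(\sqrt{\varepsilon_m})$, hence in $E(\sqrt{-1},\sqrt{\varepsilon_m})$. Therefore every prime splitting completely in $E(\sqrt{-1})$ also splits completely in $E(\sqrt{-1},\sqrt{\varepsilon_m})$; comparing densities forces $E(\sqrt{-1},\sqrt{\varepsilon_m})=E(\sqrt{-1})$, i.e.\ $\sqrt{\varepsilon_m}\in E(\sqrt{-1})$. Because $N\varepsilon_m=1$ and $\varepsilon_m>1$, both conjugates of $\varepsilon_m$ are positive, so $\Q(\sqrt{\varepsilon_m})$ is totally real and hence lies in the maximal totally real subfield $E$ of $E(\sqrt{-1})$; equivalently, $\sqrt{\varepsilon_m}\in E(\sqrt{-1})$ puts $\varepsilon_m$ or $-\varepsilon_m$ into $E^{\times2}$, and $-\varepsilon_m<0$ excludes the latter. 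Hence $\varepsilon_m$ is a square in $E$.

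The step I expect to be the main obstacle is the verification of the Proposition~\ref{prop:d8} hypotheses at the place $\p=2$: one must confirm that the congruence conditions forced by ``$l$ splits in $E(\sqrt{-1})$'' — namely $l\equiv1\pmod4$, strengthened to $l\equiv1\pmod8$ exactly when $\sqrt2\in E$ — are \emph{precisely} enough to guarantee that $2$ splits in one of $\Q(\sqrt m),\Q(\sqrt l),\Q(\sqrt{ml})$ for every residue of $m$ modulo $8$; this fussy bookkeeping is what dictates the two shapes of $E$ in the statement. I would also remark that the theorem admits a purely elementary proof: writing $N\varepsilon_m=1$ as $(t-2)(t+2)=mu^2$ or $(t-1)(t+1)=mu^2$ according to whether $\varepsilon_m=\tfrac{t+u\sqrt m}{2}$ or $t+u\sqrt m$, and extracting squarefree parts, one finds $\Q(\sqrt{\varepsilon_m})=\Q(\sqrt m,\sqrt{m_1})$ for a divisor $m_1$ of $m$ (up to a factor $2$ forced exactly when $m\equiv3\pmod4$), both of whose quadratic subfields then lie in $E$ by inspection.
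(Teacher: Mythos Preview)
Your argument is correct and follows precisely the paper's own route: choose a prime $l$ splitting completely in $E(\sqrt{-1})$, apply Propositions~\ref{prop:d8} and \ref{prop:main} with $K=\Q$, $a=1$, $\mu=m$, $\nu=l$, $\alpha=\varepsilon_m$, $b=1$ (so that $\pi_{L/\Q}(1)=1$ forces $(\varepsilon_m/l)=1$), and then conclude by Chebotarev and positivity exactly as in Theorem~\ref{thm:sq}. Your explicit case analysis at $\p=2$ and the closing elementary remark are additions beyond the paper's terse proof, but the core strategy is identical.
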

\begin{proof}
  Let $l$ be a prime which splits totally in $E(\sqrt{-1})$.
  We can apply Proposition~\ref{prop:d8}, \ref{prop:trip}, \ref{prop:main}
  with $K=\Q,\,\mu=m,\,\nu=l,\,\alpha=\varepsilon_m,\,b=1$.
  Note that all the invariants are $0$, since $a=1$ so that all Artin symbols are identity.
  Thus we have
  \[
    \art{\varepsilon_m}{l}=1.
  \]
  Similarly as Theorem~\ref{thm:sq},
  $\sqrt{\varepsilon_m}\in E$ holds.
\end{proof}

We would like to strengthen the above theorems.
In the situation of Theorem~\ref{thm:sq},
let $F=\Q(\sqrt{m_1},\dots,\sqrt{m_k})$
then $F(\sqrt{\varepsilon_{m_1}\dotsm\varepsilon_{m_k}})$ is
a quadratic or trivial subextension of $F$ in $E$.
We want to identify which of such fields
$F(\sqrt{\varepsilon_{m_1}\dotsm\varepsilon_{m_k}})$ is.

Proposition~\ref{prop:main} can also be used to narrow down
the candidates for $F(\sqrt{\varepsilon_{m_1}\dotsm\varepsilon_{m_k}})$.
The following theorem gives a general framework
for narrowing down the candidates.

\begin{theorem}
  \label{thm:candm}
  Let $m_1,\dots,m_k,n_1,\dots,n_k$ be square-free positive integers satisfying
  \begin{enumerate}
    \item $m_1\dotsm m_kn_1\dotsm n_k$ is square,
    \item $\gcd(m_i,n_i)=1$ and $N\varepsilon_{m_in_i}=-1$ for all $i$.
  \end{enumerate}
  Suppose that there exists a set $P$ of primes such that for all $i$,
  \begin{enumerate}
    \item $p\in P\iff(n_i/p)=-1$ for $p\mid m_i$,
    \item $q\in P\iff(m_i/q)=-1$ for $q\mid n_i$.
  \end{enumerate}
  Let $p_1,\dots,p_t$ be all the prime divisors of $m_1\dotsm m_kn_1\dotsm n_k$.
  Put
  \[
    F=\Q(\sqrt{m_1n_1},\dots,\sqrt{m_kn_k})
  \]
  and $F(\sqrt{\varepsilon_{m_1n_1}\dotsm\varepsilon_{m_kn_k}})=F(\sqrt d)$
  where $d\mid p_1\dotsm p_t$.
  Let $D$ be the set of all prime divisors of $d$.

  Then the cardinality of $D\cap P$ is even if and only if
  \[
    \sum_{i=1}^k\sum_{p|m_i}\sum_{q|n_i}c_{p,q}=0
  \]
  where $c_{p,q}$ are those in the equality~\eqref{eq:qcpq}.
\end{theorem}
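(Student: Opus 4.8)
The plan is to realize the sum as a pairing with a single element of $\F_2^{\oplus[V]^2}$, to check that this element lies in $\mathcal I_{\Q_{-1}^{(2)}/\Q}(-1)$, and then to evaluate the pairing. Put
\[
  I=\bigoplus_{i=1}^{k}\{\{p,q\}:p\mid m_i,\ q\mid n_i\}\in\F_2^{\oplus[V]^2},
\]
so that $\sum_{i=1}^{k}\sum_{p\mid m_i}\sum_{q\mid n_i}c_{p,q}=\sum_{\{p,q\}\in I}c_{p,q}$ in $\F_2$ as soon as the right-hand side is well defined. Note that $N\varepsilon_{m_in_i}=-1$ forces every prime factor of every $m_in_i$ to be $\equiv1\pmod 4$ or equal to $2$, so $p_1,\dots,p_t$ all lie in $V$; in particular $\art{p}{q}=\art{q}{p}$ for the primes involved, by quadratic reciprocity and the definition of $\art{\cdot}{2}$.

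First I would check that $I\in\mathcal I_{\Q_{-1}^{(2)}/\Q}(-1)$. By Theorem~\ref{thm:graph}, $\mathcal I_{\Q_{-1}^{(2)}/\Q}(-1)=\mathcal G(\Gamma_\mathrm R)\oplus\mathcal C(\Gamma_\mathrm N)$, and $\mathcal C(\Gamma_\mathrm N)$ consists of the finite edge sets of $\Gamma_\mathrm N$ in which every vertex has even degree; so it suffices that the non-residue edges occurring in $I$ have even degree at every vertex. Because the $m_in_i$ are square-free and $\gcd(m_i,n_i)=1$, the multiplicity of $\{p,q\}$ in $I$ is $\sum_i(\mathbf 1(p\mid m_i,q\mid n_i)+\mathbf 1(q\mid m_i,p\mid n_i))\bmod 2$, so the degree of $p$ in the non-residue part of $I$ is congruent modulo $2$ to
\[
  \sum_{i:\,p\mid m_i}\#\{q\mid n_i:\art{p}{q}=-1\}+\sum_{i:\,p\mid n_i}\#\{q\mid m_i:\art{p}{q}=-1\}.
\]
For $p\mid m_i$ one has $\art{n_i}{p}=\prod_{q\mid n_i}\art{q}{p}=\prod_{q\mid n_i}\art{p}{q}$, hence $\#\{q\mid n_i:\art{p}{q}=-1\}$ is odd exactly when $\art{n_i}{p}=-1$, i.e.\ exactly when $p\in P$ by hypothesis; similarly with $m_i,n_i$ interchanged. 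Thus the degree is $\equiv\#\{i:p\mid m_in_i\}$ if $p\in P$ and $\equiv0$ otherwise, and $\#\{i:p\mid m_in_i\}=v_p(m_1n_1\dotsm m_kn_k)$ is even since $m_1n_1\dotsm m_kn_k$ is a perfect square. Hence $I\in\mathcal I_{\Q_{-1}^{(2)}/\Q}(-1)$, and in particular $\sum_i\sum_{p\mid m_i}\sum_{q\mid n_i}c_{p,q}$ does not depend on the choice of $(c_{p,q})$ in \eqref{eq:qcpq}.

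Next I would evaluate the sum. The last part of Theorem~\ref{thm:qmain}, applied to $I$, says it vanishes iff $\prod_{p\in P_I}\art{\prod_{\{p,q\}\in I}q}{p}_4=(-1)^{k}$, where $P_I$ is the set of primes occurring in $I$, $k$ is the number of its non-residue edges, and $\prod_{\{p,q\}\in I}q\equiv\prod_{i:\,p\mid m_i}n_i\cdot\prod_{i:\,p\mid n_i}m_i\pmod{(\Q^\times)^2}$. Independently, Theorem~\ref{thm:sq} (applied to $m_1n_1,\dots,m_kn_k$) shows $\varepsilon_{m_1n_1}\dotsm\varepsilon_{m_kn_k}$ is a square in $E=\Q(\sqrt{p_1},\dots,\sqrt{p_t})$, so $F(\sqrt d)$ is defined; whether $p_j\in D$ is then detected, by Chebotarev's theorem, from the Frobenius at an auxiliary prime $l_j$ chosen (Dirichlet) with $l_j\equiv1\pmod 8$, $\art{p_j}{l_j}=-1$, and $\art{p_{j'}}{l_j}=1$ for $j'\ne j$, acting on $\sqrt{\varepsilon_{m_1n_1}\dotsm\varepsilon_{m_kn_k}}$. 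Evaluating the relevant local symbols at $l_j$ by running the product formula and Proposition~\ref{prop:main} index by index — separating the $i$ with $p_j\mid m_in_i$, for which $l_j$ is inert in $\Q(\sqrt{m_in_i})$ so that the general $b$ of Proposition~\ref{prop:main} is needed, from those with $p_j\nmid m_in_i$ — and then summing over $p_j\in P$, where the $P$-condition is exactly what forces these symbols into the ramification and splitting bookkeeping of Proposition~\ref{prop:main}, one obtains $|D\cap P|\equiv\sum_{\{p,q\}\in I}c_{p,q}\pmod 2$ after comparison with the displayed quartic-residue identity.

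The hard part is this last matching: identifying the explicit quartic-residue formula for $\sum c_{p,q}$ coming from Theorem~\ref{thm:qmain} with the prime factorization of $d$, where $\varepsilon_{m_1n_1}\dotsm\varepsilon_{m_kn_k}\equiv d\pmod{(F^\times)^2}$. The delicate points are tracking which fields $\Q(\sqrt{m_in_i})$ the auxiliary primes split in (and invoking the general $b$ of Proposition~\ref{prop:main} when they are inert), reconciling $P_I$ and $k$ with the $P$-condition, and — since some $m_in_i$ may be even — handling the prime $2$ separately with the symbols $\art{\cdot}{2}$ and $\art{\cdot}{2}_4$. Step~1 and the application of Theorem~\ref{thm:qmain} are short by comparison.
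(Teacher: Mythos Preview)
Your verification that $I\in\mathcal I_{\Q_{-1}^{(2)}/\Q}(-1)$ is correct, but from that point on the proposal is only a sketch, and the route you chose makes the sketch hard to complete. With one auxiliary prime $l_j$ per $p_j$, whenever $p_j\mid m_in_i$ your $l_j$ is \emph{inert} in $\Q(\sqrt{m_in_i})$; the symbol $(\varepsilon_{m_in_i}/l_j)$ is then not defined in the sense used here, and Proposition~\ref{prop:main} (whose left-hand product runs over primes \emph{split} in $K(\sqrt\mu)$) does not give it to you. Patching this and then matching against the quartic formula of Theorem~\ref{thm:qmain} is exactly the ``hard part'' you did not do.

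The paper bypasses all of this with a single well-chosen prime. Take one $l\equiv1\pmod4$ with $(p_j/l)=-1\iff p_j\in P$ (Dirichlet). The hypothesis on $P$ gives $(p/l)=(n_i/p)$ for $p\mid m_i$ and $(q/l)=(m_i/q)$ for $q\mid n_i$, whence $(m_in_i/l)=1$ for every $i$; so $l$ splits in each $\Q(\sqrt{m_in_i})$ and Propositions~\ref{prop:d8}, \ref{prop:trip}, \ref{prop:main} apply with $\mu=m_in_i$, $\nu=m_il$, $\alpha=\varepsilon_{m_in_i}$, $b=1$, giving
\[
  \art{\varepsilon_{m_in_i}}{l}=1\iff\sum_{p\mid m_in_i}c_{p,l}+\sum_{p\mid m_i}\sum_{q\mid n_i}c_{p,q}=0.
\]
Summing over $i$, the terms $c_{p,l}$ cancel because $\prod_i m_in_i$ is a square, while $\prod_i(\varepsilon_{m_in_i}/l)=(d/l)=(-1)^{|D\cap P|}$ by the very choice of $l$. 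That is the whole proof: no Theorem~\ref{thm:qmain}, no $b\neq1$, no inert case, and no separate verification that $I\in\mathcal I$.
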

\begin{proof}
  From Dirichlet's theorem,
  there exists a prime $l\equiv1\ (4)$ such that
  for $j=1,\dots,t$,
  $(p_j/l)=-1$ if and only if $p_j\in P$.
  This yields $(p/l)=(n_i/p)$ for $p\mid m_i$ and
  $(q/l)=(m_i/q)$ for $q\mid n_i$ from the assumption on $P$.
  From this it is easy to show that $(m_in_i/l)=1$ for all $i$,
  hence $l$ totally splits in $F$.
  Also it is easy to show that $\art{m_in_i,m_il}{p}=1$
  for any places $p$,
  thus we can use Proposition~\ref{prop:d8}, \ref{prop:trip}, \ref{prop:main}
  with $K=\Q,\,\alpha=\varepsilon_{m_in_i},\,b=1,\,\mu=m_in_i,\,\nu=m_il$,
  which yields
  \[
    \art{\varepsilon_{m_in_i}}{l}=1\iff\sum_{p|m_in_i}c_{p,l}+\sum_{p|m_i}\sum_{q|n_i}c_{p,q}=0.
  \]
  Since $\prod_im_in_i$ is square, we obtain
  \[
    \art{d}{l}=1\iff\art{\prod_i\varepsilon_{m_in_i}}{l}=1\iff\sum_{i=1}^k\sum_{p|m_i}\sum_{q|n_i}c_{p,q}=0.
  \]
\end{proof}

\begin{example}
  Let $p,q,r\not\equiv3\ (4)$ be distinct primes
  with $(p/q)=(q/r)=(r/p)=-1$.
  Applying Theorem~\ref{thm:candm} with
  $k=3,\,m_1=n_3=p,\,m_2=n_1=q,\,m_3=n_2=r$ and $P=\{p,q,r\}$,
  we have that
  $\varepsilon_{pq}\varepsilon_{qr}\varepsilon_{rp}$ is square in
  $\Q(\sqrt{pq},\sqrt{pr})$
  if and only if
  $(p/q)_4(q/r)_4(r/p)_4=-1$.
\end{example}

We also have an analogous theorem of Theorem~\ref{thm:candm}:
\begin{theorem}
  \label{thm:candp}
  Let $m,n$ be square-free positive integers satisfying
  \begin{enumerate}
    \item $mn$ is square,
    \item $\gcd(m,n)=1$ and $N\varepsilon_{mn}=1$,
    \item $p\equiv1\ (4)$ for each prime $p\mid m$.
  \end{enumerate}
  Define a set $P$ of primes to be
  \begin{align}
    \set{p\text{ : prime}|p\mid m,\,(n/p)=-1}&\cup
    \set{q\text{ : prime}|q\mid n,\,(m/q)=-1}\\
    &\cup\begin{cases}
      \{2\}&(m\equiv5\ (8)\text{ and }n\equiv3\ (4))\\
      \emptyset&(\text{otherwise}).
    \end{cases}
  \end{align}
  Let $F=\Q(\sqrt{mn})$
  and $F(\sqrt{\varepsilon_{mn}})=F(\sqrt d)$,
  where $d$ is a square-free positive integer such that
  \[
    \begin{cases}
      d\mid2mn&(n\equiv3\ (4))\\
      d\mid mn&(\text{otherwise}).
    \end{cases}
  \]
  Let $D$ be the set of all prime divisors of $d$.

  Then the cardinality of $D\cap P$ is even.
\end{theorem}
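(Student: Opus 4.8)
The plan is to transpose the argument used to prove Theorem~\ref{thm:candm} to the case $a=1$, where it collapses to the present one-sided statement because every local Artin symbol attached to $1$ is trivial. First I would use Dirichlet's theorem to manufacture an auxiliary prime $l$ with $l\equiv1\pmod 4$ whose quadratic behaviour at the primes dividing $2mn$ is prescribed by $P$: for each odd prime $p\mid mn$ impose $(p/l)=-1$ exactly when $p\in P$ (a congruence on $l$ modulo $p$, which is equivalent to a condition on $(p/l)$ because $l\equiv1\pmod 4$), and impose $(2/l)=-1$ exactly when $2\in P$ (a condition on $l$ modulo $8$, compatible with $l\equiv1\pmod 4$; when $n\equiv3\pmod 4$ this reads $l\equiv5\pmod 8\iff m\equiv5\pmod 8$). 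These conditions are jointly solvable by the Chinese remainder theorem, and one checks—using hypothesis (3) that $m$ is a product of primes $\equiv1\pmod 4$, together with the defining property of $P$ and quadratic reciprocity—that such an $l$ is unramified outside $mnl$, splits totally in $F$, lies in $V$, and is prime to $2mn$.

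Next I would apply Propositions~\ref{prop:d8}, \ref{prop:trip} and \ref{prop:main} with $K=\Q$, $\mu=mn$, $\nu=ml$, $\alpha=\varepsilon_{mn}$ and $b=1$, exactly as in the proof of Theorem~\ref{thm:candm} and of the norm-$1$ analogue of Theorem~\ref{thm:sq}. The hypotheses are routine: $N_{\Q(\sqrt{mn})/\Q}(\varepsilon_{mn})=N\varepsilon_{mn}=1$ gives the norm condition with $a=1$; the Hilbert symbols $\art{mn,ml}{\p}$, $\art{1,mn}{\p}$ and $\art{1,ml}{\p}$ all equal $1$ and the splitting condition of Proposition~\ref{prop:d8}~(\ref{it:d82}) holds at every place; and since $b=1$ and $\alpha=\varepsilon_{mn}$ is a global unit the valuation hypotheses of Proposition~\ref{prop:main} hold automatically. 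Because $a=1$, the product $\pi_{L/\Q}(1)$ equals $1$ tautologically; Proposition~\ref{prop:main} then records the equivalent Hilbert-symbol identity, whose product ranges over $S_{\Q(\sqrt{mn})/\Q}\cap R_{\Q(\sqrt{ml})/\Q}$. That intersection is the single place above $l$—$l$ splits in $\Q(\sqrt{mn})$, the primes dividing $m$ ramify there, and $2$ is unramified in $\Q(\sqrt{ml})$ since $ml\equiv1\pmod 4$—so the identity reduces to $\art{\varepsilon_{mn}}{l}=1$ (the factor $m$ contributing a trivial Hilbert symbol at the odd place above $l$), and this now holds unconditionally.

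Finally, from $F(\sqrt{\varepsilon_{mn}})=F(\sqrt d)$ the numbers $\varepsilon_{mn}$ and $d$ agree modulo $(F^\times)^2$, and both are units at the unique place of $F$ over $l$, which is split with residue field $\F_l$; reducing there gives $\art{\varepsilon_{mn}}{l}=(d/l)$, the Jacobi symbol, and $(d/l)=\prod_{p\in D}(p/l)$. By the choice of $l$, $(p/l)=-1$ precisely for $p\in P$, so $(d/l)=(-1)^{\#(D\cap P)}$; combined with $\art{\varepsilon_{mn}}{l}=1$ this forces $\#(D\cap P)$ to be even. The genuinely delicate step is the construction of $l$ in the first paragraph: one must verify not only that the prescribed residue data are consistent but that they force $l$ to split totally in $F$, and it is exactly there that hypothesis (3) and the mod-$8$ clause in the definition of $P$ are indispensable; everything downstream is a direct specialization of machinery already established in the paper.
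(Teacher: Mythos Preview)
Your proposal is correct and follows the same route as the paper's own proof, which is only a two-line sketch (``Take a prime $l\equiv1\ (4)$ such that for $p\mid 2mn$, $(p/l)=-1$ iff $p\in P$; a similar argument as in Theorem~\ref{thm:candm} yields $(\varepsilon_{mn}/l)=1$, hence $(d/l)=1$''). You have simply unpacked that sketch: the same auxiliary prime $l$, the same specialization of Propositions~\ref{prop:d8}--\ref{prop:main} with $\mu=mn$, $\nu=ml$, $\alpha=\varepsilon_{mn}$, $b=1$, and the same endgame $(\varepsilon_{mn}/l)=(d/l)=(-1)^{\#(D\cap P)}$. Your remark that the delicate point is the construction of $l$---where hypothesis~(iii) and the mod-$8$ clause in $P$ are needed to force $l$ to split in $F$ and to make the Hilbert symbols $(mn,ml)_\p$ vanish at $2$ and at the primes dividing $mn$---is exactly right and is left implicit in the paper.
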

\begin{proof}
  Take a prime $l\equiv1\ (4)$ such that
  for $p\mid2mn$,
  $(p/l)=-1$ if and only if $p\in P$.
  A similar argument as in Theorem~\ref{thm:candm} yields
  $(\varepsilon_{mn}/l)=1$,
  hence $(d/l)=1$.
\end{proof}

The sign of the norm of a fundamental unit may be determined
from Theorem~\ref{thm:candm} or \ref{thm:candp}:
\begin{corollary}
  Let $m,n$ be square-free positive integers whose prime divisors are not equivalent to $3\bmod4$.
  Assume that
  \begin{enumerate}
    \item $\gcd(m,n)=1$,
    \item $(n/p)=1$ for all $p\mid m$,
    \item $(m/q)=1$ for all $q\mid n$,
    \item $\prod\limits_{p|m}(n/p)_4\prod\limits_{q|n}(m/q)_4=-1$.
  \end{enumerate}
  Then $N\varepsilon_{mn}=1$.
\end{corollary}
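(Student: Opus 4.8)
The plan is to argue by contradiction: assume $N\varepsilon_{mn}=-1$ and show that the product in~(iv) is then forced to be $+1$. We may assume $m,n>1$, since otherwise~(iv) cannot hold; note also that $mn$ is square-free (because $m,n$ are square-free and coprime), so $\varepsilon_{mn}$ makes sense, and every quartic symbol appearing in~(iv) is defined by virtue of~(ii) and~(iii) (which assert that the relevant Legendre symbols equal $1$).

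Granting $N\varepsilon_{mn}=-1$, I would invoke Theorem~\ref{thm:candm} with $k=2$, $(m_1,n_1)=(m,n)$ and $(m_2,n_2)=(mn,1)$. Its hypotheses are immediate: $m_1m_2n_1n_2=(mn)^2$ is a square, $\gcd(m_i,n_i)=1$, and $N\varepsilon_{m_in_i}=N\varepsilon_{mn}=-1$ for $i=1,2$; and $P=\emptyset$ satisfies (indeed is forced by) the required conditions, since $(n/p)=1$ for $p\mid m$ and $(m/q)=1$ for $q\mid n$ by~(ii) and~(iii), while $(1/p)=1$ trivially. Here $F=\Q(\sqrt{mn})$ and $\varepsilon_{m_1n_1}\varepsilon_{m_2n_2}=\varepsilon_{mn}^2$ is already a square in $F$, so $d=1$, $D=\emptyset$ and $\#(D\cap P)=0$ is even; hence Theorem~\ref{thm:candm} gives
\[
  \sum_{p\mid m}\sum_{q\mid n}c_{p,q}=0
\]
(the $i=2$ block of the triple sum being empty). (Alternatively one obtains this directly by imitating the proof of Theorem~\ref{thm:candm}: take an auxiliary prime $l\equiv1\pmod4$ splitting in $\Q(\sqrt{-1})$ and in every $\Q(\sqrt{p_j})$, and apply Propositions~\ref{prop:d8},~\ref{prop:trip} and~\ref{prop:main} with $\mu=mn$, $\nu=l$, $\alpha=\varepsilon_{mn}$, $b=1$; but citing the theorem is shorter.)

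Next I would feed $I=\{\,\{p,q\}:p\mid m,\ q\mid n\,\}$ into the last part of Theorem~\ref{thm:qmain}. One first checks $I\in\mathcal{I}_{\Q_{-1}^{(2)}/\Q}(-1)=\mathcal{G}(\Gamma_\mathrm{R})\oplus\mathcal{C}(\Gamma_\mathrm{N})$: the residue part of $I$ lies in $\mathcal{G}(\Gamma_\mathrm{R})$ trivially, and the non-residue part lies in $\mathcal{C}(\Gamma_\mathrm{N})$ because it has even degree at every vertex. Indeed, at $p\mid m$ this degree is $\#\{\,q\mid n:(p/q)=-1\,\}$, which is even since $(n/p)=\prod_{q\mid n}(q/p)=\prod_{q\mid n}(p/q)=1$ by~(ii) — here $(q/p)=(p/q)$ because no prime divisor of $mn$ is $\equiv3\pmod4$ — and symmetrically at $q\mid n$ using~(iii). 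The union set of $I$ is the set of all prime divisors of $mn$; for $p\mid m$ the product $\prod_{\{p,q\}\in I}q$ equals $n$, for $q\mid n$ it equals $m$; and the number $k$ of non-residue edges of $I$ equals $\sum_{p\mid m}\#\{\,q\mid n:(p/q)=-1\,\}$, a sum of even integers, hence $k$ is even. So Theorem~\ref{thm:qmain} yields
\[
  \sum_{p\mid m}\sum_{q\mid n}c_{p,q}=0\iff\prod_{p\mid m}\art{n}{p}_4\prod_{q\mid n}\art{m}{q}_4=1 .
\]

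Combining the two displays gives $\prod_{p\mid m}(n/p)_4\prod_{q\mid n}(m/q)_4=1$, which contradicts~(iv). Therefore $N\varepsilon_{mn}\neq-1$, and since the norm of a fundamental unit is $\pm1$ we conclude $N\varepsilon_{mn}=1$. The two points that need care are the verification that $I$ belongs to the invariant lattice $\mathcal{I}_{\Q_{-1}^{(2)}/\Q}(-1)$ (i.e.\ that its non-residue edges form a cycle, via the even-degree and reciprocity argument) and the bookkeeping in Theorem~\ref{thm:qmain} — exactly which quartic symbols occur, and the parity of $k$; the slightly unusual appeal to Theorem~\ref{thm:candm} with the degenerate pair $(mn,1)$ is harmless but worth spelling out.
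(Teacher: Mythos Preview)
Your proof is correct and follows the same strategy as the paper: assume $N\varepsilon_{mn}=-1$, use Theorem~\ref{thm:candm} with $P=\emptyset$ to obtain $\sum_{p\mid m}\sum_{q\mid n}c_{p,q}=0$, then apply Theorem~\ref{thm:qmain} to the complete bipartite graph on the prime divisors of $m$ and $n$ to contradict~(iv). The only difference is cosmetic --- the paper invokes Theorem~\ref{thm:candm} with $k=1+s+t$, adjoining each prime divisor as a separate pair $(p_j,1)$, rather than your more economical $k=2$ with the single extra pair $(mn,1)$ --- and your explicit verification that the bipartite edge set lies in $\mathcal{I}_{\Q_{-1}^{(2)}/\Q}(-1)$ (via the even-degree argument) fills in a detail the paper leaves implicit.
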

\begin{proof}
  Factorize $m=p_1\dotsm p_s$ and $n=q_1\dotsm q_t$.
  Suppose $N\varepsilon_{mn}=-1$ and apply Theorem~\ref{thm:candm}
  with $k=1+s+t,\,m_1=m,\,n_1=n,\,m_{1+j}=p_j\ (j=1,\dots,s),\,m_{1+s+j}=q_j\ (j=1,\dots,t),\,n_{1+j}=1\ (j=1,\dots,s+t)$
  and $P=\emptyset$,
  we have $\sum_{p|m}\sum_{q|n}c_{p,q}=0$.
  Applying Theorem~\ref{thm:qmain} for the complete bipartite graph on
  $p_1,\dots,p_s$ and $q_1,\dots,q_t$ we have
  $\prod_{p|m}(n/p)_4\prod_{q|n}(m/q)_4=1$,
  which contradicts our assumption.
\end{proof}

\begin{example}
  Let $p,q,r,s\not\equiv3\ (4)$ be distinct primes
  with $(p/q)=(q/r)=(r/s)=(s/p)$ and
  $(pr/q)_4(pr/s)_4(qs/p)_4(qs/r)_4=-1$.
  Then $N\varepsilon_{pqrs}=1$.
\end{example}

Finally we describe an application to the unit index of
Kuroda's class number formula.
The following proposition is a special case of
Kuroda's class number formula,
and we want to determine $Q(F)$ in it.
Here $h_K$ denotes the class number of a number field $K$.
\begin{proposition}[\cite{kuroda_1950,nmj/1118799770}]
  Let $F$ be a real biquadratic field and $K_1,K_2,K_3$ be
  the quadratic subfields of $F$.
  Then
  \[
    h_F=\frac{1}{4}Q(F)h_{K_1}h_{K_2}h_{K_3}
  \]
  where $Q(F)=[E_F:E_{K_1}E_{K_2}E_{K_3}]$.
\end{proposition}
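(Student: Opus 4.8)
The plan is to prove the formula analytically, by factoring the Dedekind zeta function of $F$, comparing residues at $s=1$, and then computing the unit lattice of $F$ explicitly.

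First I would use the character-theoretic factorization of $\zeta_F$. Since $\Gal(F/\Q)\simeq(\Z/2\Z)^2$ has exactly three nontrivial characters $\chi_1,\chi_2,\chi_3$, each of order $2$ with $\chi_i$ cutting out $K_i$, one has
\[
  \zeta_F(s)=\zeta_\Q(s)\,L(s,\chi_1)L(s,\chi_2)L(s,\chi_3)=\frac{\zeta_{K_1}(s)\zeta_{K_2}(s)\zeta_{K_3}(s)}{\zeta_\Q(s)^2}.
\]
Near $s=1$ the numerator has a pole of order three and the denominator one of order two, so $\zeta_F$ has a simple pole whose residue equals the product of the residues of the $\zeta_{K_i}$ at $s=1$, divided by the square of the residue of $\zeta_\Q$ at $s=1$ (which is $1$). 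Writing out the analytic class number formula for $\Q$, for each real quadratic field $K_i$, and for the totally real quartic field $F$, the archimedean constants $2^{r_1}(2\pi)^{r_2}/w$ are compatible ($r_2=0$ and $w=2$ throughout, and $2^4/2=(2^2/2)^3$), and by the conductor--discriminant formula $d_F=d_{K_1}d_{K_2}d_{K_3}$ the discriminant factors cancel. What survives is the clean identity
\[
  h_FR_F=h_{K_1}h_{K_2}h_{K_3}\,R_{K_1}R_{K_2}R_{K_3}.
\]

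The substance is then the regulator comparison, and this is where the factor $\tfrac{1}{4}$ and the index $Q(F)$ enter. Take positive fundamental units $\varepsilon_i>1$ of $K_i$; then $E_{K_1}E_{K_2}E_{K_3}=\langle-1,\varepsilon_1,\varepsilon_2,\varepsilon_3\rangle$, so modulo $\{\pm1\}$ the three units $\varepsilon_1,\varepsilon_2,\varepsilon_3$ span a sublattice of the rank-three unit lattice of $F$ of index $Q(F)$. I would compute the covolume of this sublattice directly via the logarithmic embedding of $F$ into the trace-zero hyperplane of $\mathbb{R}^4$: the four real places of $F$ sit two above each real place of each $K_i$, and since $N_{K_i/\Q}\varepsilon_i=\pm1$ the nontrivial conjugate of $\varepsilon_i$ has logarithm $-\log\varepsilon_i$, so the vector attached to $\varepsilon_i$ has entries $\pm\log\varepsilon_i$ in the sign pattern dictated by $\chi_i$. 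The associated $3\times3$ determinant is $\log\varepsilon_1\log\varepsilon_2\log\varepsilon_3$ times a matrix with entries $\pm1$ whose determinant is $\pm4$, so the covolume equals $4R_{K_1}R_{K_2}R_{K_3}$. Since the covolume of a finite-index sublattice is the index times the covolume of the ambient lattice, this yields $4R_{K_1}R_{K_2}R_{K_3}=Q(F)R_F$, i.e.\ $R_F=\tfrac{4}{Q(F)}R_{K_1}R_{K_2}R_{K_3}$.

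Substituting this into the identity above gives $h_F=\tfrac{1}{4}Q(F)h_{K_1}h_{K_2}h_{K_3}$. I expect the main obstacle to be the bookkeeping in the regulator step: one must check that the normalization of the regulator (which archimedean place is omitted, covolume versus signed determinant) is taken consistently for $F$ and for the $K_i$, and that the torsion is handled so that the lattice index is exactly $Q(F)$ rather than a power of $2$ times it; once these conventions are fixed, what remains is the evaluation of an explicit $3\times3$ determinant with entries in $\{1,-1\}$, which is $4$. A purely algebraic alternative exists, running through the ambiguous class number formula and Herbrand quotients for the $\Gal(F/\Q)$-modules of units and id\`eles, as in the cited references, but the analytic argument above is the most economical.
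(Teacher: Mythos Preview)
The paper does not prove this proposition at all: it is stated with citations to Kuroda and to the Kubota paper and used as a black box, so there is no ``paper's own proof'' to compare against. Your analytic proof is the standard one and is correct. The zeta factorization $\zeta_F=\zeta_{K_1}\zeta_{K_2}\zeta_{K_3}/\zeta_\Q^2$ combined with the class number formula and the conductor--discriminant relation $d_F=d_{K_1}d_{K_2}d_{K_3}$ indeed gives $h_FR_F=h_{K_1}h_{K_2}h_{K_3}R_{K_1}R_{K_2}R_{K_3}$, and your regulator computation (the $3\times3$ sign matrix has determinant $\pm4$, so the sublattice regulator is $4R_{K_1}R_{K_2}R_{K_3}=Q(F)R_F$) is exactly right.

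The only place to be careful is the one you flagged yourself: because both $E_F$ and $E_{K_1}E_{K_2}E_{K_3}$ contain the same torsion $\{\pm1\}$, the index of the free parts equals $Q(F)$ on the nose, so no stray factor of $2$ enters. With that checked, your argument is complete. The algebraic route via Herbrand quotients that you allude to is what the cited references actually do, but for the real biquadratic case your analytic argument is cleaner.
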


For example, we can deduce the following formula from our results:
\begin{example}
  Let $p,q,r\not\equiv3\ (4)$ be distinct primes with $(p/q)=1$ and $(q/r)=-1$,
  and put $F=\Q(\sqrt p,\sqrt{qr})$.
  Then
  \[
    Q(F)=\begin{cases}
      1&(N\varepsilon_{pqr}=-1\text{ and }(p/q)_4(q/p)_4=-1)\\
      2&(\text{otherwise}).
    \end{cases}
  \]
\end{example}
\begin{proof}
  Let $K_1=\Q(\sqrt p),\,K_2=\Q(\sqrt{qr}),\,K_3=\Q(\sqrt{pqr})$.
  The quotient $E_F/E_{K_1}E_{K_2}E_{K_3}$ is an elementary abelian $2$-group
  (see \cite{nmj/1118799770}).
  Hence one can show that the squaring map induces an embedding
  \[
    E_F\Big/E_{K_1}E_{K_2}E_{K_3}\longrightarrow E_{K_1}E_{K_2}E_{K_3}\Big/(E_{K_1}E_{K_2}E_{K_3})^2.
  \]
  To obtain the size of the left-hand side,
  we check that for each element of the right-hand side whether it is square in $F$.

  Suppose $N\varepsilon_{pqr}=-1$.
  It suffices to consider the totally positive units,
  hence we have to check whether $\varepsilon_p\varepsilon_{qr}\varepsilon_{pqr}$ is square in $F$.
  Using Theorem~\ref{thm:candm} with
  $k=3,\,m_1=p,\,n_1=1,\,m_2=q,\,n_2=r,\,m_3=pr,\,n_3=q$ and $P=\{q,r\}$,
  we obtain
  \[
    F(\sqrt{\varepsilon_p\varepsilon_{qr}\varepsilon_{pqr}})=\begin{cases}
      F&(c_{p,q}=0)\\F(\sqrt q)&(c_{p,q}=1).
    \end{cases}
  \]
  Therefore $Q(F)=1$ if and only if $c_{p,q}=1$, i.e.,
  $(p/q)_4(q/p)_4=-1$ by Theorem~\ref{thm:qmain}.

  Suppose $N\varepsilon_{pqr}=1$.
  Similarly, we have to check whether $\varepsilon_{pqr}$ is square in $F$.
  Using Theorem~\ref{thm:candp} with $m=pr,\,n=q$,
  we obtain
  \[
    F(\sqrt{\varepsilon_{pqr}})=F.
  \]
  Therefore $Q(F)=2$.
\end{proof}

\nocite{MR819231,MR2392026}

\bibliographystyle{plain}
\bibliography{ref}

\end{document}